\documentclass[12pt]{article}
\usepackage[utf8]{inputenc}
\usepackage[T1]{fontenc}
\usepackage{lmodern}
\usepackage[english]{babel}
\usepackage{amsmath,mathtools,amsthm,amssymb,amsfonts}
\usepackage{color}
\textwidth 6.5in
\oddsidemargin -.02in
\evensidemargin -.02in
\textheight 7.7in
\topmargin .5in
\usepackage{comment}

\usepackage{mathrsfs}
\usepackage{graphicx}
\usepackage{enumitem}
\usepackage{tikz} 
\usepackage{esint}
\usepackage{pgfplots}
\usepackage{esvect}
\usepackage{pdfpages}
\usepackage{nicefrac}
\usepackage{leftidx, tensor}
\usepackage{cleveref}
\usepackage{accents}
\usetikzlibrary{shapes,arrows}
\usetikzlibrary{calc}
\usetikzlibrary{shapes.geometric}
\usetikzlibrary{shapes.arrows}
\usetikzlibrary{arrows.meta}
\usetikzlibrary{decorations.markings}
\usetikzlibrary{fit}
\usetikzlibrary{patterns}
\usetikzlibrary{hobby}
\usepgfplotslibrary{patchplots}
\pgfplotsset{compat=1.11}

\renewcommand{\thefootnote}{\fnsymbol{footnote}}
\newcommand{\definedas}{\mathrel{\raise.095ex\hbox{\rm :}\mkern-5.2mu=}}

\newcommand{\R}{\mathbb{R}}
\newcommand{\N}{\mathbb{N}}

\newcommand{\Sbb}{\mathbb{S}}

\renewcommand{\d}{\,\mathrm{d}}

\newcommand{\Hess}{\mathrm{Hess}}

\newcommand{\ul}[1]{\underline{#1}}

\newcommand{\btr}[1]{\left\vert#1\right\vert}
\newcommand{\newbtr}[1]{\vert#1\vert}
\newcommand{\norm}[1]{\btr{\btr{#1}}}

\newcommand{\spann}[1]{\left\langle#1\right\rangle}

\newcommand{\Ric}{\mathrm{Ric}}

\newcommand{\Rm}{\mathrm{Rm}}

\newcommand{\two}{\operatorname{II}}
\newcommand{\tr}{\text{tr}}
\newcommand{\dive}{\operatorname{div}}

\theoremstyle{plain}
\newtheorem{thm}{Theorem}[section]
\newtheorem{prop}[thm]{Proposition}

\newtheorem{lem}[thm]{Lemma}

\theoremstyle{definition}

\newtheorem{bem}[thm]{Remark}
\newtheorem{kor}[thm]{Corollary}

\begin{document}
		\begin{center}
			{\LARGE {Ricci flow on surfaces along the standard lightcone in the $3+1$-Minkowski spacetime}\par}
		\end{center}
		\vspace{0.5cm}
		\begin{center}
			{\large Markus Wolff\footnote[2]{wolff@math.uni-tuebingen.de}}\\
			\vspace{0.4cm}
			{\large Department of Mathematics}\\
			{\large Eberhard Karls Universit\"at T\"ubingen, Germany}
		\end{center}
		\vspace{0.4cm}
		\begin{abstract}
			Identifying any conformally round metric on the  $2$-sphere with a unique cross section on the standard lightcone in the $3+1$-Minkowski spacetime, we gain a new perspective on $2d$-Ricci flow on topological spheres. It turns out that in this setting, Ricci flow is equivalent to a null mean curvature flow first studied by Roesch--Scheuer along null hypersurfaces. Exploiting this equivalence, we can translate well-known results from $2d$-Ricci flow first proven by Hamilton into a full classification of the singularity models for null mean curvature flow in the Minkowski lightcone. Conversely, we obtain a new proof of Hamilton's classical result using only the maximum principle.
		\end{abstract}
		\renewcommand{\thefootnote}{\arabic{footnote}}
		\setcounter{footnote}{0}
	\section{Introduction}
	A classical result for $2d$-Ricci flow states that any metric on a $2$-surface converges under renormalized Ricci flow to a metric of constant scalar curvature: This was first proven by Hamilton \cite{hamilton1}. In the case of surfaces of genus $0$ this was initially proven for metrics of strictly positive scalar curvature. This assumption was eventually removed by Chow \cite{chow1}. Their techniques rely heavily on establishing a Harnack inequality and an entropy bound. Their approach also gives an independent proof of the uniformization theorem, c.f. \cite{chenlutian}. Due to the difficulty of this approach, the conformally round case was later revisited and several new proofs were given utilizing the uniformization theorem \cite{andrewsbryan,bartzstruweye,struwe}.
	
	In this paper, we gain a new geometric perspective on $2d$-Ricci flow on topological spheres by uniquely embedding any conformally round metric on the $2$-sphere into the past-pointing standard lightcone in the $3+1$-Minkowski spacetime. This idea of identifying any metric conformal to a a given Riemannian metric as a unique cross section on a null hypersurface within a spacetime satisfying the Einstein Vacuum Equations was first proposed by Fefferman--Graham \cite{feffermangraham} to study conformal invariants. In the context of Lorentzian geometry, Hamilton's initial restriction to metrics of strictly positive scalar curvature translates to a physically reasonable assumption on the representing codimension-$2$ surface in the Minkowski spacetime. From the Gauß equation, we are moreover able to conclude the equivalence of Ricci flow and null mean curvature flow along the lightcone first studied by Roesch--Scheuer \cite{roeschscheuer}. As defined by Roesch--Scheuer, null mean curvature flow 
	\begin{align}
		\frac{\d}{\d t}x=-\frac{1}{2}\spann{\vec{\mathcal{H}},L}\ul{L}
	\end{align}
	describes the evolution proportional to the projection of the mean curvature vector with respect to the null generator $\ul{L}$ of the null hypersurface, where $\vec{\mathcal{H}}$ is the codimension-$2$ mean curvature vector in the ambient spacetime and $L$ is a null vector such that $\{\ul{L},L\}$ forms an appropriate null frame of the normal space of the surface. This projection along the null hypersurface greatly reduces the difficulty of the general codimension-$2$ problem and essentially transforms it into a scalar valued parabolic equation. This was utilized by Roesch--Scheuer in the detection of marginally outer trapped surfaces. No such surfaces exist in the Minkowski spacetime as they arise as the cross section between the null hypersurface and a (black hole) horizon. Due to the equivalence to Ricci flow, we can conclude that the flow extinguishes in finite time along the lightcone and provide a full characterization of the singularity models of the flow (Corollary \ref{thm_singularities}), each corresponding to a member of the restricted Lorentz group $\operatorname{SO}^+(3,1)$. Conversely, understanding conformal Ricci flow as an extrinsic curvature flow along the lightcone gives rise to a new proof of Hamilton's classical result (Theorem \ref{thm_mainthm}). The main ingredient to this approach will be a choice of gauge for the null frame giving rise to a scalar valued second fundamental form along the lightcone. Then any conformally round metric has constant scalar curvature if and only if this scalar valued second fundamental form is pure trace (Proposition \ref{prop_codazziminkowski2}). Studying the evolution of this geometric object along the flow will allow us to adopt techniques utilized in the study of $3d$ Ricci flow and mean curvature flow (cf. \cite{andrewschowguentherlangford,chowluni}).
	
	This observation extends to higher dimensions, as it turns out that null mean curvature flow as defined above is equivalent to the Yamabe flow in the conformal class of the round metric for arbitrary dimension. See Section \ref{sec_discussion} for a more detailed discussion and references. \newline\\
	This paper is structured as follows:\newline
	In \Cref{sec_prelim} we fix some notation and recall the well-known null structure equations in the Minkowski spacetime. In \Cref{sec_nullgeom} we compute all necessary geometric objects on the standard Minkowski lightcone and fix our choice of null gauge. In \Cref{sec_equialence} we establish the equivalence between Ricci flow in the conformal class of the round sphere and null mean curvature flow along the standard Minkowski lightcone, and prove Corollary  \ref{thm_singularities}. In \Cref{sec_hamilton} we prove Theorem \ref{thm_mainthm}, establishing a new proof of Hamilton's classical result. We close with some comments on the Yamabe flow in the higher dimensional case in \Cref{sec_discussion}.
		
		\subsection*{Acknowledgements.}
		I would like to express my sincere gratitude towards my PhD supervisors Carla Cederbaum and Gerhard Huisken for their continuing guidance and helpful discussions.
		\setcounter{section}{1}
	\section{Preliminaries}\label{sec_prelim}
	Throughout this paper, $\R^{3,1}$ will always refer to the $3+1$-dimensional Minkowski spacetime $(\R^{3,1},\eta)$, where $\R^4$ is equipped with the flat metric $\eta$ of signature $(-+++)$. In polar coordinates, $\R^{3,1}$ is given as $\R\times(0,\infty)\times\Sbb^2$ with
	\[
	\eta=-\d t^2+ \d r^2+r^2\d\Omega^2,
	\]
	where $\d\Omega^2$ denotes the standard round metric on $\Sbb^2$. The standard lightcone centered at the origin in the Minkowski spacetime is given as the set
	\[
	C(0):=\{\btr{t}=r\},
	\]
	and we denote the components $C(0)_+:=C(0)\cap\{t\ge 0\}$, $C(0)_{-}:=C(0)\cap\{t\le 0\}$ as the future-pointing and past-pointing standard lightcone (centered at the origin and with time-orientation induced by $\partial_t$), respectively.
	
	In the following, $(\Sigma,\gamma)$ will always denote a $2$-surface with Riemanian metric $\gamma$, and we are in particular interested in such surfaces arising as closed, orientable, spacelike codimension-$2$ surfaces in $\R^{3,1}$ (usually restricted to the lightcone). As usual, we define the vector valued second fundamental form $\vec\two$ of $\Sigma$ in $\R^{3,1}$ as 
	\begin{align*}
	\vec\two(V,W)=\left(\overline{\nabla}_VW\right)^\perp,
	\end{align*}
	where $\overline{\nabla}$ denotes the Levi-Civita connection on the Minkowski spacetime. The codimension-$2$ mean curvature vector of $\Sigma$ is then defined as $\vec{\mathcal{H}}=\tr_\gamma \vec\two$, where $\tr_\gamma$ denotes the metric trace on $\Sigma$ with respect to $\gamma$, and we denote its Lorentzian length by $\mathcal{H}^2:=\eta(\vec{\mathcal{H}},\vec{\mathcal{H}})$. Let $\{\ul{L},L\}$ be a null frame of $\Gamma(T^\perp\Sigma))$, i.e., $\eta(\ul{L},\ul{L})=\eta(L,L)=0$, with $\eta(\ul{L},L)=2$. Then $\vec{\two}$ and $\vec{\mathcal{H}}$ admit the decomposition 
	\begin{align}
	\begin{split}
		\vec{\two}&=-\frac{1}{2}\chi\ul{L}-\frac{1}{2}\ul{\chi}L,\\
		\vec{\mathcal{H}}&=-\frac{1}{2}\theta\ul{L}-\frac{1}{2}\ul{\theta}L.
	\end{split}
	\end{align}
	Here, the null second fundamental forms $\ul{\chi}$ and $\chi$ with respect to $\ul{L}$ and $L$, respectively, are defined as
	\begin{align*}
	\ul{\chi}(V,W)&\definedas \spann{\overline{\nabla}_V\ul{L},W}=-\spann{\overline{\nabla}_VW,\ul{L}},\\
	{\chi}(V,W)&\definedas \spann{\overline{\nabla}_V{L},W}=-\spann{\overline{\nabla}_VW,L},
	\end{align*}
	for all tangent vector fields $V,W\in T\Sigma$, and the null expansions $\ul{\theta}$ and $\theta$ with respect to $\ul{L}$ and $L$, respectively, as
	\begin{align*}
	\ul{\theta}&\definedas \tr_\gamma\ul{\chi},\\
	{\theta}&\definedas \tr_\gamma{\chi}.
	\end{align*}
	In particular
	\begin{align}
	\begin{split}\label{eq_secondffnulldecomp}
	\newbtr{\vec\two}^2&=\spann{\chi,\ul{\chi}},\\
	\mathcal{H}^2&=\ul{\theta}\theta,
	\end{split}
	\end{align}
	and if $\mathcal{H}^2$ is constant along $\Sigma$, i.e., $\vec{\mathcal{H}}$ has constant Lorentzian length along $\Sigma$, we call $\Sigma$ a surface of constant spacetime mean curvature (STCMC), cf. Cederbaum--Sakovich \cite{cederbaumsakovich}.
	Finally, we define the connection one-form $\zeta$ as
	\[
	\zeta(V)\definedas\frac{1}{2}\spann{\overline{\nabla}_V\ul{L},L}
	\]
	and together with $\ul{\chi}$, $\chi$, we collect the following well-known identities:
	\begin{lem}\label{lem_vectorderivatives}
		\begin{align*}
		\overline{\nabla}_{\partial_i}\partial_j&=\nabla_{\partial_i}\partial_j-\frac{1}{2}\ul{\chi}_{ij}L-\frac{1}{2}\chi_{ij}\ul{L},\\
		\overline{\nabla}_{\partial_i}\ul{L}&=\ul{\chi}_i^j\partial_j+\zeta(\partial_i)\ul{L},\\
		\overline{\nabla}_{\partial_i}L&=\chi_i^j\partial_j-\zeta(\partial_i)L.
		\end{align*}
	\end{lem}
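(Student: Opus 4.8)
The plan is to prove all three identities simultaneously by decomposing the ambient covariant derivatives $\overline{\nabla}_{\partial_i}(\cdot)$ into their tangential and normal parts relative to the splitting $T\R^{3,1}|_\Sigma = T\Sigma \oplus T^\perp\Sigma$, and then reading off the components against the null frame $\{\ul L, L\}$ using the normalization $\eta(\ul L,\ul L)=\eta(L,L)=0$, $\eta(\ul L,L)=2$. Concretely, for any normal vector field $X$ along $\Sigma$ one has the expansion $X = \tfrac12\eta(X,L)\,\ul L + \tfrac12\eta(X,\ul L)\,L$; this is the basic bookkeeping tool I would use throughout, and it is worth stating it once at the start.

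For the first identity, I would split $\overline{\nabla}_{\partial_i}\partial_j = (\overline{\nabla}_{\partial_i}\partial_j)^\top + (\overline{\nabla}_{\partial_i}\partial_j)^\perp$. The tangential part is $\nabla_{\partial_i}\partial_j$ by definition of the induced Levi-Civita connection (Gauss formula), and the normal part is $\vec\two(\partial_i,\partial_j)$ by definition of the second fundamental form. Then I substitute the stated decomposition $\vec\two = -\tfrac12\chi\,\ul L - \tfrac12\ul\chi\,L$ evaluated on $(\partial_i,\partial_j)$, giving exactly $-\tfrac12\ul\chi_{ij}L - \tfrac12\chi_{ij}\ul L$. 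For the second identity, I would expand $\overline{\nabla}_{\partial_i}\ul L$ in the frame $\{\partial_j, \ul L, L\}$. The coefficient of $\partial_k$ is obtained by pairing with $\partial_j$ and using $\eta(\overline{\nabla}_{\partial_i}\ul L, \partial_j) = \ul\chi(\partial_i,\partial_j)$, which is the defining relation for $\ul\chi$; raising an index with $\gamma$ gives $\ul\chi_i^{\ j}\partial_j$. The coefficient of $\ul L$ comes from pairing with $L$: by definition $\eta(\overline{\nabla}_{\partial_i}\ul L, L) = 2\zeta(\partial_i)$, and dividing by $\eta(\ul L, L)=2$ yields the coefficient $\zeta(\partial_i)$. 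Finally, the coefficient of $L$ comes from pairing with $\ul L$; since $\eta(\ul L,\ul L)=0$ is constant, $\eta(\overline{\nabla}_{\partial_i}\ul L, \ul L) = \tfrac12\partial_i\,\eta(\ul L,\ul L) = 0$, so there is no $L$-component. This gives $\overline{\nabla}_{\partial_i}\ul L = \ul\chi_i^{\ j}\partial_j + \zeta(\partial_i)\ul L$. The third identity is proved symmetrically, using $\eta(\overline{\nabla}_{\partial_i}L,\partial_j)=\chi(\partial_i,\partial_j)$, $\eta(\overline{\nabla}_{\partial_i}L,L)=\tfrac12\partial_i\,\eta(L,L)=0$, and the key sign: differentiating $\eta(\ul L, L)=2$ gives $\eta(\overline{\nabla}_{\partial_i}\ul L, L) + \eta(\ul L, \overline{\nabla}_{\partial_i}L)=0$, hence $\eta(\overline{\nabla}_{\partial_i}L,\ul L) = -2\zeta(\partial_i)$, producing the $-\zeta(\partial_i)L$ term.

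The only subtle point — and the step I would be most careful with — is keeping the factors of $\tfrac12$ and the signs consistent between the normalization $\eta(\ul L,L)=2$, the definition of $\zeta$ with its prefactor $\tfrac12$, and the $-\tfrac12$ coefficients in the decomposition of $\vec\two$; a single misplaced factor propagates into all subsequent computations in the paper. Everything else is a direct unwinding of definitions, so the proof is short: state the normal-component formula $X=\tfrac12\eta(X,L)\ul L+\tfrac12\eta(X,\ul L)L$, apply it to $\overline{\nabla}_{\partial_i}\ul L$ and $\overline{\nabla}_{\partial_i}L$ after recording their pairings with $\partial_j$, $\ul L$, $L$, and invoke the Gauss formula for the first identity.
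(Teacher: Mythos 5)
Your proof is correct, and the reasoning is airtight: the key normal-frame expansion $X=\tfrac12\eta(X,L)\,\ul{L}+\tfrac12\eta(X,\ul{L})\,L$ is stated correctly, the Gauss formula plus the decomposition of $\vec\two$ gives the first identity, and the pairings $\eta(\overline{\nabla}_{\partial_i}\ul{L},\partial_j)=\ul{\chi}_{ij}$, $\eta(\overline{\nabla}_{\partial_i}\ul{L},L)=2\zeta(\partial_i)$, $\eta(\overline{\nabla}_{\partial_i}\ul{L},\ul{L})=0$ (and their mirror images obtained by differentiating $\eta(\ul{L},L)=2$ and $\eta(L,L)=0$) give the second and third. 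The paper states this lemma without proof, as a collection of well-known identities, and what you wrote is exactly the standard derivation one would supply; the factor-of-$\tfrac12$ and sign bookkeeping you flagged as the delicate step is handled consistently with the paper's conventions $\eta(\ul{L},L)=2$, $\zeta(V)=\tfrac12\spann{\overline{\nabla}_V\ul{L},L}$, and $\vec\two=-\tfrac12\chi\ul{L}-\tfrac12\ul{\chi}L$.
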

	We denote the Riemann curvature tensor, Ricci curvature and scalar curvature on  $(\Sigma,\gamma)$ by $\Rm$, $\Ric$, $\operatorname{R}$, respectively, where we use the following conventions:
	\begin{align*}
	\Rm(X,Y,W,Z)&=\spann{\nabla_X\nabla_YZ-\nabla_Y\nabla_XZ-\nabla_{[X,Y]}Z,W},\\
	\Ric(V,W)&=\tr_\gamma \Rm(V,\cdot,W,\cdot),\\
	\operatorname{R}&=\tr_\gamma\Ric.
	\end{align*}
	We use $\nabla$ for the Levi--Civita connection on $(\Sigma,\gamma)$ and $\nabla^k$ for the $k$-th tensor derivative. We use $\btr{\cdot}:=\btr{\cdot}_{\gamma}$ for all respective tensor norms induced by $\gamma$, where we will usually omit $\gamma$ for convenience when it is clear from the context. By slight abuse of notation, we will denote the gradient of a function $f$ on $(\Sigma,\gamma)$ by $\nabla f$. We denote the trace free part of a $(0,2)$-tensor $T$ as $\accentset{\circ}{T}:=T-\tr_\gamma T\gamma$.
	
	We briefly recall the well-known Gauß and Codazzi Equations in the case of a codimension-$2$ surface in $\R^{3,1}$, which can be directly computed from Lemma \ref{lem_vectorderivatives}. Compare \cite[Theorem 2.2]{wang}\footnote{Note the different sign conventions for $\ul{L}$.}.
	\begin{prop}[Gauß Equations]\label{prop_nullgauß}
		\begin{align*}
		\Rm_{ijkl}&=\frac{1}{2}\chi_{jl}\ul{\chi}_{ik}+\frac{1}{2}\ul{\chi}_{jl}\chi_{ik}-\frac{1}{2}\chi_{jk}\ul{\chi}_{il}-\frac{1}{2}\ul{\chi}_{jk}\chi_{il},\\
		\Ric_{ik}&=\frac{1}{2}\theta\ul{\chi}_{ik}+\frac{1}{2}\ul{\theta}\chi_{ik}-\frac{1}{2}(\chi\cdot\ul{\chi})_{ik}-\frac{1}{2}(\ul{\chi}\cdot\chi)_{ik},\\
		\operatorname{R}&=\mathcal{H}^2-\newbtr{\vec{\two}}^2.
		\end{align*}
	\end{prop}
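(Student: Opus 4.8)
The plan is to derive all three identities directly from \Cref{lem_vectorderivatives} together with the fact that the Minkowski metric $\eta$ is flat, so that its Levi--Civita connection $\overline{\nabla}$ has vanishing curvature tensor. Choosing local coordinate vector fields $\partial_i,\partial_j,\partial_l$ on $\Sigma$ (so that $[\partial_i,\partial_j]=0$), flatness of $\eta$ gives $\overline{\nabla}_{\partial_i}\overline{\nabla}_{\partial_j}\partial_l=\overline{\nabla}_{\partial_j}\overline{\nabla}_{\partial_i}\partial_l$. I will compute the left-hand side with \Cref{lem_vectorderivatives}, split the result into its components tangent and normal to $\Sigma$, and then read off the Gauß equation by comparing the tangential parts after antisymmetrizing in $i\leftrightarrow j$.

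Concretely, I would first insert $\overline{\nabla}_{\partial_j}\partial_l=\nabla_{\partial_j}\partial_l-\tfrac12\ul{\chi}_{jl}L-\tfrac12\chi_{jl}\ul{L}$ and then apply $\overline{\nabla}_{\partial_i}$, using the first line of \Cref{lem_vectorderivatives} to differentiate the tangential field $\nabla_{\partial_j}\partial_l$ and the second and third lines to differentiate $\ul{L}$ and $L$. All terms carrying a derivative of $\ul{\chi}_{jl}$ or $\chi_{jl}$, a factor of $\zeta$, or a factor $\vec{\two}(\partial_i,\nabla_{\partial_j}\partial_l)$ lie in $T^\perp\Sigma$ and are discarded (upon antisymmetrization they would produce the Codazzi equations); what remains in $T\Sigma$ is $\nabla_{\partial_i}\nabla_{\partial_j}\partial_l-\tfrac12\ul{\chi}_{jl}\,\chi_i{}^m\partial_m-\tfrac12\chi_{jl}\,\ul{\chi}_i{}^m\partial_m$. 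Antisymmetrizing in $i\leftrightarrow j$ and using that the full expression vanishes, the tangential components give $\nabla_{\partial_i}\nabla_{\partial_j}\partial_l-\nabla_{\partial_j}\nabla_{\partial_i}\partial_l=\tfrac12\big(\ul{\chi}_i{}^m\chi_{jl}+\chi_i{}^m\ul{\chi}_{jl}-\ul{\chi}_j{}^m\chi_{il}-\chi_j{}^m\ul{\chi}_{il}\big)\partial_m$. Pairing with $\partial_k$ and recalling the convention $\Rm_{ijkl}=\spann{\nabla_{\partial_i}\nabla_{\partial_j}\partial_l-\nabla_{\partial_j}\nabla_{\partial_i}\partial_l,\partial_k}$ yields the first formula.

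The remaining two identities follow by tracing. Contracting the formula for $\Rm_{ijkl}$ with $\gamma^{jl}$ and using $\tr_\gamma\chi=\theta$, $\tr_\gamma\ul{\chi}=\ul{\theta}$, $\gamma^{jl}\ul{\chi}_{il}\chi_{jk}=(\ul{\chi}\cdot\chi)_{ik}$ and $\gamma^{jl}\chi_{il}\ul{\chi}_{jk}=(\chi\cdot\ul{\chi})_{ik}$ gives the stated expression for $\Ric_{ik}$; contracting once more with $\gamma^{ik}$ and invoking \eqref{eq_secondffnulldecomp} (i.e. $\ul{\theta}\theta=\mathcal{H}^2$ and $\spann{\chi,\ul{\chi}}=\newbtr{\vec{\two}}^2$) gives $\operatorname{R}=\mathcal{H}^2-\newbtr{\vec{\two}}^2$. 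The only point requiring care is the bookkeeping — sorting the many terms produced by the Leibniz rule into tangential and normal parts, and matching the surviving index pattern to the (slightly nonstandard) curvature convention fixed above — but there is no genuine obstacle, since flatness of $\eta$ does the essential work.
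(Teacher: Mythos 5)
Your proposal is correct and follows precisely the route the paper intends: the paper presents these Gauß equations as "directly computed from Lemma \ref{lem_vectorderivatives}" (with a pointer to Wang), and you carry out exactly that direct computation, expanding $\overline{\nabla}_{\partial_i}\overline{\nabla}_{\partial_j}\partial_l$ via Lemma \ref{lem_vectorderivatives}, discarding the normal components, antisymmetrizing, and then tracing twice using \eqref{eq_secondffnulldecomp}. The index bookkeeping in the displayed Gauß formula, the Ricci trace, and the scalar trace all check out against the stated conventions.
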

	\begin{prop}[Codazzi Equations]\label{prop_nullcodazzi}
		\begin{align*}
		\nabla_i\ul{\chi}_{jk}-\nabla_j\ul{\chi}_{ik}
		&=-\zeta_j\ul{\chi}_{ik}+\zeta_i\ul{\chi}_{jk},\\
		\nabla_i\chi_{jk}-\nabla_j\chi_{ik}&=+\zeta_j\chi_{ik}-\zeta_i\chi_{jk}.
		\end{align*}
	\end{prop}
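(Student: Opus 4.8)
The plan is to obtain both Codazzi identities directly from Lemma~\ref{lem_vectorderivatives}, exploiting that the ambient Minkowski connection $\overline{\nabla}$ is flat. Since $[\partial_i,\partial_j]=0$ for coordinate fields, flatness gives $\overline{\nabla}_{\partial_i}\overline{\nabla}_{\partial_j}\ul{L}=\overline{\nabla}_{\partial_j}\overline{\nabla}_{\partial_i}\ul{L}$ and the analogous identity for $L$. The strategy is to compute the full tangential covariant derivative $\nabla_i\ul{\chi}_{jk}$ (resp.\ $\nabla_i\chi_{jk}$) in terms of an ambient second derivative plus lower-order terms, and then antisymmetrize in $i\leftrightarrow j$ so that the ambient-curvature term and the torsion-symmetric Christoffel term both drop out.

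For the first equation I would start from $\ul{\chi}_{jk}=\spann{\overline{\nabla}_{\partial_j}\ul{L},\partial_k}$, apply $\partial_i$ together with the metric Leibniz rule, and substitute $\overline{\nabla}_{\partial_i}\partial_k=\nabla_{\partial_i}\partial_k-\tfrac12\ul{\chi}_{ik}L-\tfrac12\chi_{ik}\ul{L}$ from Lemma~\ref{lem_vectorderivatives}. Using the null-frame normalizations $\eta(\ul{L},\ul{L})=0$ and $\eta(\ul{L},L)=2$ --- which give $\spann{\overline{\nabla}_{\partial_j}\ul{L},\ul{L}}=0$ and $\spann{\overline{\nabla}_{\partial_j}\ul{L},L}=2\zeta_j$ by the definition of $\zeta$ --- together with $\nabla_i\ul{\chi}_{jk}=\partial_i\ul{\chi}_{jk}-\ul{\chi}(\nabla_{\partial_i}\partial_j,\partial_k)-\ul{\chi}(\partial_j,\nabla_{\partial_i}\partial_k)$, one arrives at
\[
\nabla_i\ul{\chi}_{jk}=\spann{\overline{\nabla}_{\partial_i}\overline{\nabla}_{\partial_j}\ul{L},\partial_k}-\zeta_j\ul{\chi}_{ik}-\ul{\chi}(\nabla_{\partial_i}\partial_j,\partial_k).
\]
Antisymmetrizing in $i,j$ annihilates $\spann{\overline{\nabla}_{\partial_i}\overline{\nabla}_{\partial_j}\ul{L}-\overline{\nabla}_{\partial_j}\overline{\nabla}_{\partial_i}\ul{L},\partial_k}$ by flatness and the Christoffel term since $\nabla$ is torsion-free, leaving exactly $\nabla_i\ul{\chi}_{jk}-\nabla_j\ul{\chi}_{ik}=-\zeta_j\ul{\chi}_{ik}+\zeta_i\ul{\chi}_{jk}$.

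The second equation follows from the identical computation applied to $\chi_{jk}=\spann{\overline{\nabla}_{\partial_j}L,\partial_k}$, now using $\overline{\nabla}_{\partial_i}L=\chi_i^j\partial_j-\zeta_iL$ and the relation $\spann{\overline{\nabla}_{\partial_j}L,\ul{L}}=-\spann{L,\overline{\nabla}_{\partial_j}\ul{L}}=-2\zeta_j$, obtained by differentiating $\eta(L,\ul{L})=2$; the sign flip in this pairing is precisely what turns the $-\zeta$ terms into the $+\zeta$ terms on the right-hand side of the second Codazzi equation. I do not expect a genuine obstacle here: the only care required is in the bookkeeping --- correctly tracking into which null direction each piece of $\overline{\nabla}_{\partial_i}\partial_k$ projects, and noting that both the ambient-curvature and the Christoffel contributions vanish upon antisymmetrization.
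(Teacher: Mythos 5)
Your proof is correct and takes exactly the route the paper intends: the paper omits the proof, stating only that the Codazzi equations ``can be directly computed from Lemma~\ref{lem_vectorderivatives},'' and your computation is precisely that direct verification, antisymmetrizing the ambient second derivative of $\ul L$ (resp.\ $L$) and using flatness of $\overline\nabla$ together with the null-frame pairings $\spann{\ul L,\ul L}=\spann{L,L}=0$, $\spann{\ul L,L}=2$ to isolate the $\zeta$-terms.
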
	
	In the following, we will always consider null vector fields $\ul{L}$ such that their integral curves are geodesics, i.e.,
	\begin{align}\label{eq_geodesiccondition}
	\overline{\nabla}_{\ul{L}}\ul{L}=0.
	\end{align}
	Under this additional assumption, the propagation equations, also known as the\linebreak Raychaudhuri optical equations (cf. \cite[Lemma 3.2]{roeschscheuer}), along a (local) deformation
	\[
	\frac{\d}{\d t}=\varphi \ul{L}
	\]
	with $\varphi\in C^2(\Sigma)$, simplify in $\R^{3,1}$ to the following:
	\begin{lem}[Propagation Equations]\label{lem_propagation}\,
		\begin{itemize}
			\item[\emph{(i)}] $\frac{\d}{\d t}\gamma_{ij}=2\varphi\ul{\chi}_{ij}$, $\frac{\d}{\d t}\gamma^{ij}=-2\varphi\ul{\chi}^{ij}$, $\frac{\d}{\d t}\d\mu=\varphi\ul{\theta}\d \mu$,
			\item[\emph{(ii)}] 
			$\frac{\d}{\d t}\ul{\chi}_{ij}=\varphi(\ul{\chi})^2_{ij}$,
			\item[\emph{(iii)}]
			$\frac{\d}{\d t}L=-2\nabla\varphi-2\varphi\zeta^k\partial_k$,
			\item[\emph{(iv)}]
			$\frac{\d}{\d t}\chi_{ij}=-2\Hess_{ij}\varphi-2(\d\varphi_i\otimes\zeta_j+\d\varphi_j\otimes\zeta_i)-\varphi\left(2\nabla_i\zeta_j+2\zeta_i\otimes\zeta_j-(\chi\ul{\chi})_{ij}\right)$,
			\item[\emph{(v)}]
			$\frac{\d}{\d t}\ul{\theta}=-\varphi\newbtr{\ul{\chi}}^2$,
			\item[\emph{(vi)}]
			$\frac{\d}{\d t}\theta=-2\Delta\varphi-2\gamma(\nabla\varphi,\zeta)-\varphi\left(\newbtr{\vec{\two}}^2+2\dive\zeta+2\btr{\zeta}^2\right)$.
		\end{itemize}
	\end{lem}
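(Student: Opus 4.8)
The plan is to derive all six equations by differentiating the relevant geometric quantity in $t$ and invoking Lemma~\ref{lem_vectorderivatives}, exploiting two structural features of the ambient geometry. First, $\R^{3,1}$ is flat, so the ambient curvature vanishes; since $\frac{\d}{\d t}$ and the spatial coordinate fields $\partial_i$ commute (they are push-forwards of coordinate vector fields on the domain of the deformation), this gives $\overline{\nabla}_{\frac{\d}{\d t}}\overline{\nabla}_{\partial_i}V=\overline{\nabla}_{\partial_i}\overline{\nabla}_{\frac{\d}{\d t}}V$ for every vector field $V$ along the deformation, as well as $\overline{\nabla}_{\frac{\d}{\d t}}\partial_i=\overline{\nabla}_{\partial_i}\frac{\d}{\d t}$. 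Second, because $\frac{\d}{\d t}=\varphi\ul{L}$ and $\overline{\nabla}_{\ul{L}}\ul{L}=0$ by \eqref{eq_geodesiccondition}, we have $\overline{\nabla}_{\frac{\d}{\d t}}\ul{L}=\varphi\overline{\nabla}_{\ul{L}}\ul{L}=0$; this geodesic gauge is precisely what makes the equations ``simplify''. As a recurring identity I would use $\overline{\nabla}_{\frac{\d}{\d t}}\partial_i=\overline{\nabla}_{\partial_i}(\varphi\ul{L})=(\partial_i\varphi+\varphi\zeta_i)\ul{L}+\varphi\ul{\chi}_i^j\partial_j$ via Lemma~\ref{lem_vectorderivatives}.

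For (i), differentiating $\gamma_{ij}=\spann{\partial_i,\partial_j}$ in $t$ and substituting the identity just recalled annihilates the $\ul{L}$-terms against $\partial_j$ and leaves $\varphi\ul{\chi}_{ij}$, so $\frac{\d}{\d t}\gamma_{ij}=2\varphi\ul{\chi}_{ij}$ after symmetrising; the evolutions of $\gamma^{ij}$ and $\d\mu$ then follow from $\frac{\d}{\d t}\gamma^{ij}=-\gamma^{ik}\gamma^{jl}\frac{\d}{\d t}\gamma_{kl}$ and $\frac{\d}{\d t}\d\mu=\frac{1}{2}\gamma^{ij}(\frac{\d}{\d t}\gamma_{ij})\d\mu$. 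For (ii), writing $\ul{\chi}_{ij}=\spann{\overline{\nabla}_{\partial_i}\ul{L},\partial_j}$, the term $\spann{\overline{\nabla}_{\frac{\d}{\d t}}\overline{\nabla}_{\partial_i}\ul{L},\partial_j}$ vanishes since $\overline{\nabla}_{\frac{\d}{\d t}}\overline{\nabla}_{\partial_i}\ul{L}=\overline{\nabla}_{\partial_i}\overline{\nabla}_{\frac{\d}{\d t}}\ul{L}=0$, and the remaining term $\spann{\overline{\nabla}_{\partial_i}\ul{L},\overline{\nabla}_{\frac{\d}{\d t}}\partial_j}$ collapses, via Lemma~\ref{lem_vectorderivatives} and the null relations $\spann{\ul{L},\ul{L}}=0=\spann{\ul{L},\partial_k}$, to $\varphi\ul{\chi}_i^k\ul{\chi}_{jk}=\varphi(\ul{\chi})^2_{ij}$.

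For (iii), $L$ is pinned down by $\spann{L,L}=0$, $\spann{L,\ul{L}}=2$, $\spann{L,\partial_i}=0$; differentiating all three identities in $t$ and using $\overline{\nabla}_{\frac{\d}{\d t}}\ul{L}=0$ together with the formula for $\overline{\nabla}_{\frac{\d}{\d t}}\partial_i$ determines every component of $\overline{\nabla}_{\frac{\d}{\d t}}L$ in the frame $\{\ul{L},L,\partial_1,\partial_2\}$: the $L$- and $\ul{L}$-components vanish, while the tangential component comes out to $-2\nabla\varphi-2\varphi\zeta^k\partial_k$. Equation (iv) is the longest: from $\chi_{ij}=\spann{\overline{\nabla}_{\partial_i}L,\partial_j}$, the first term equals $\spann{\overline{\nabla}_{\partial_i}\overline{\nabla}_{\frac{\d}{\d t}}L,\partial_j}=\spann{\overline{\nabla}_{\partial_i}(-2\nabla\varphi-2\varphi\zeta^k\partial_k),\partial_j}$, whose $\partial_j$-component is $\nabla_i(-2\partial_j\varphi-2\varphi\zeta_j)=-2\Hess_{ij}\varphi-2(\partial_i\varphi)\zeta_j-2\varphi\nabla_i\zeta_j$, and the second term $\spann{\overline{\nabla}_{\partial_i}L,\overline{\nabla}_{\frac{\d}{\d t}}\partial_j}$ expands through $\overline{\nabla}_{\partial_i}L=\chi_i^k\partial_k-\zeta_iL$ and the null relations to $\varphi(\chi\ul{\chi})_{ij}-2\zeta_i\partial_j\varphi-2\varphi\zeta_i\zeta_j$; adding and regrouping yields the stated form. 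Finally, (v) and (vi) follow by tracing (ii) and (iv) against $\gamma$, keeping the contribution of $\frac{\d}{\d t}\gamma^{ij}=-2\varphi\ul{\chi}^{ij}$ from (i) and using $\spann{\chi,\ul{\chi}}=\newbtr{\vec{\two}}^2$ from \eqref{eq_secondffnulldecomp}.

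The computations are essentially mechanical; the one point that needs genuine care is the commutation $\overline{\nabla}_{\frac{\d}{\d t}}\overline{\nabla}_{\partial_i}=\overline{\nabla}_{\partial_i}\overline{\nabla}_{\frac{\d}{\d t}}$, which is exactly where the flatness of $\R^{3,1}$ enters and eliminates the ambient-curvature contributions present in the general Raychaudhuri equations of \cite[Lemma~3.2]{roeschscheuer}; combined with the geodesic gauge $\overline{\nabla}_{\ul{L}}\ul{L}=0$, this is what reduces everything to the clean form above. I expect the main bookkeeping hurdle to be (iii)--(iv), since the expression for $\overline{\nabla}_{\frac{\d}{\d t}}L$ feeds both (iv) and, after tracing against $\gamma$, (vi).
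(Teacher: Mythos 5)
The paper does not actually prove this lemma in text: it cites the general Raychaudhuri optical equations of Roesch--Scheuer and asserts that they simplify under the flatness of $\R^{3,1}$ and the geodesic gauge \eqref{eq_geodesiccondition}. Your blind derivation is therefore a genuinely different route, and its architecture is correct. The two structural inputs you isolate are exactly what do all the work: vanishing of the ambient Riemann tensor lets you interchange $\overline{\nabla}_{\d/\d t}$ and $\overline{\nabla}_{\partial_i}$ freely, and the geodesic gauge forces $\overline{\nabla}_{\d/\d t}\ul{L}=\varphi\overline{\nabla}_{\ul{L}}\ul{L}=0$; together they eliminate the curvature terms present in the general propagation equations. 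Your recurring identity $\overline{\nabla}_{\d/\d t}\partial_i=(\partial_i\varphi+\varphi\zeta_i)\ul{L}+\varphi\ul{\chi}_i^{\,k}\partial_k$ follows directly from Lemma~\ref{lem_vectorderivatives}, and the arguments for (i)--(v) and the frame-pairing argument pinning down $\frac{\d}{\d t}L$ in (iii) all check out.

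One point you should not skim past: when you trace (iv) to obtain (vi), the term $-2\bigl(\d\varphi_i\otimes\zeta_j+\d\varphi_j\otimes\zeta_i\bigr)$ contracts against $\gamma^{ij}$ to $-4\gamma(\nabla\varphi,\zeta)$, not the $-2\gamma(\nabla\varphi,\zeta)$ written in the statement of (vi). The coefficient $-4$ is the correct one. It is forced by the symmetry of $\chi_{ij}$, which requires both cross terms $\d\varphi_i\zeta_j$ and $\d\varphi_j\zeta_i$ to appear in $\frac{\d}{\d t}\chi_{ij}$, and it can be cross-checked against the paper's own Lemma~\ref{lem_Apropagation}: expanding $-2\Hess_{ij}(\ul{\theta}\varphi)$ using $\nabla\ul{\theta}=\ul{\theta}\zeta$ and tracing reproduces exactly the $-4\ul{\theta}\gamma(\nabla\varphi,\zeta)$ contribution, so $\frac{\d}{\d t}\mathcal{H}^2=-2\Delta(\ul{\theta}\varphi)-\ul{\theta}\varphi\mathcal{H}^2$ is consistent only with coefficient $-4$ in (vi). Thus the stated (vi) contains a typographical error, and your proof, carried out as you prescribe, would surface it; you should say so explicitly rather than tacitly claiming to reproduce the printed formula by a trace you have not actually performed.
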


	\section{Null Geometry on the standard Minkowski lightcone}\label{sec_nullgeom}
		For the sake of simplicity, we will keep the discussion of null geometry to the standard Minkowski lightcone. For the interested reader, we refer to \cite{marssoria,roeschscheuer} for a more complete and general introduction to null geometry. For our purpose, it is most convenient to introduce the null coordinates $v\definedas r+t$, $u\definedas r-t$ on $\R^{3,1}$. Then $\eta$ can be written as
		\[
			\eta=\frac{1}{2}\left(\d u\d v+\d v\d u\right)+r^2\d\Omega^2
		\]
		with $r=r(u,v)=\frac{u+v}{2}$. Now, all past-pointing standard lightcones in the Minkowski spacetime are given by the sets $\{v=\operatorname{const.}\}$ (and similarly all future-pointing lightcones are given by $\{u=\operatorname{const.}\}$). From now on, we will work on the null hypersurface $\mathcal{N}=\{v=0\}=C(0)_-$, i.e., the past-pointing standard lightcone centered at the origin, but all identities derived for $\mathcal{N}$ will also analogously hold on all level sets of $v$ and $u$ respectively. Note that $\mathcal{N}$ has the induced degenerate metric 
		\[
			r^2\d\Omega^2,
		\]
		and is generated by the geodesic integral curves of $\ul{L}:=2\partial_u$. Note that $\ul{L}$ is past-pointing and consistent with assumption \eqref{eq_geodesiccondition}. Recall that the null generator $\ul{L}$ of a null hypersurface is both tangential and normal to $\mathcal{N}$, and by choice of $\ul{L}$ we have $\ul{L}(r)=1$. Thus, $r$ restricts to an affine parameter along $\mathcal{N}$. In particular, we can represent any spacelike cross section $\Sigma$ of $\mathcal{N}$ (which intersects any integral curve of $\ul{L}$ exactly once) as a graph over $\Sbb^2$, i.e., $\Sigma=\Sigma_\omega=\{\omega=r\}\subseteq\mathcal{N}$. In particular, $\Sigma$ has the induced metric
		\[
			\gamma=\omega^2\d\Omega^2,
		\]
		so $(\Sigma,\gamma)$ is conformally round. Conversely, for any conformally round metric $\gamma_\omega=\omega^2\d\Omega^2$ there exists a unique spacelike cross section $\Sigma_\omega$ such that $(\Sigma_\omega,\gamma_\omega)$ embeds into $\mathcal{N}$, where we will omit the subscript $\omega$ in the following when it is clear from the context. This observation is similar to an idea developed by Fefferman--Graham \cite{feffermangraham}, and their construction indeed yields the standard lightcone in the $3+1$-Minkowski spacetime in the case of the round $2$-sphere.
		
		We now want to represent the extrinsic curvature of a spacelike cross section $(\Sigma,\gamma)$ of $\mathcal{N}$ as a codimension-$2$ surface with respect to a particular null frame. Recall that the null generator $\ul{L}$ is both tangent and normal to $\mathcal{N}$, in particular $\ul{L}$ is normal to any spacelike cross section $(\Sigma,\gamma)$. We further consider a normal null vector field $L$ along $\Sigma$ such that $\eta(\ul{L},L)=2$. This uniquely determines $L$ such that $\{\ul{L},L\}$ form a frame of the normal bundle $T^\perp\Sigma$ of $\Sigma$. Note that $L$ is future-pointing.
		
		We now remark that the standard round spheres $\{\Sigma_s\}_{s\in(0,\infty)}$ form a level-set foliation with respect to the integral curves of the null generator $\ul{L}$. It is easy to verify that for any leaf $\Sigma_s$, we have $L=2\partial_v$ and find
		\begin{align*}
			\ul{\chi}_s&=\chi_s=s\d\Omega^2,\\
			\ul{\theta}_s&=\theta_s=\frac{2}{s},\\
			\zeta_s&=0.
		\end{align*}
		From this background foliation, we can explicitly compute all extrinsic curvature quantities for any spacelike cross section $\Sigma$, cf. \cite[Proposition 1]{marssoria}\footnote{Note the different sign conventions $k=-\ul{L}$, $s_l=-\zeta$.}:
		\begin{prop}\label{prop_minkowskilightcone}
			For any spacelike cross section $(\Sigma,\gamma)$ in $\mathcal{N}$, we find
			\begin{enumerate}
				\item[\emph{(i)}] $\gamma=\omega^2\d\Omega^2$,
				\item[\emph{(ii)}] $\ul{\chi}=\frac{1}{\omega}\gamma$,
				\item[\emph{(iii)}] $\ul{\theta}=\frac{2}{\omega}$,
				\item[\emph{(iv)}] $\chi=\frac{1}{\omega}(1+\btr{\nabla\omega}^2)\gamma-2\Hess\, \omega$
				\item[\emph{(v)}] $\theta=2\left(\frac{1}{\omega}+\frac{\btr{\nabla\omega}^2}{\omega}-\Delta \omega\right)$,
				\item[\emph{(vi)}] $\zeta=-\frac{\d\omega}{\omega}$,
			\end{enumerate}
			where $\Hess$ and $\Delta$ denote the Hessian and Laplace--Beltrami operator on $(\Sigma,\gamma)$, respectively.
		\end{prop}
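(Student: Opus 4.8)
The plan is to carry out a direct computation in the ambient spacetime in the null coordinates $(u,v,x^1,x^2)$ fixed above, using that every spacelike cross section is a graph $\Sigma=\Sigma_\omega=\{r=\omega\}$ over $\Sbb^2$ for some $\omega\in C^\infty(\Sbb^2)$. Writing $\sigma=\d\Omega^2$ for the round metric and $r=\tfrac{u+v}{2}$, I would first record the Christoffel symbols of $\eta$ in these coordinates: the only non-vanishing ones are $\Gamma^u_{ab}=\Gamma^v_{ab}=-r\sigma_{ab}$, $\Gamma^a_{ub}=\Gamma^a_{vb}=\tfrac{1}{2r}\delta^a_b$ and $\Gamma^a_{bc}=\mathring{\Gamma}^a_{bc}$, the Christoffel symbols of $\sigma$. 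A coordinate tangent frame of $\Sigma_\omega$ is then $X_a=\partial_a+2\omega_a\partial_u$ with $\omega_a=\partial_a\omega$, and since $\eta(\partial_u,\partial_u)=\eta(\partial_u,\partial_a)=0$ while $\eta(\partial_a,\partial_b)=r^2\sigma_{ab}$, one reads off $\gamma_{ab}=\eta(X_a,X_b)=\omega^2\sigma_{ab}$, which is (i).

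Next I would compute $\overline{\nabla}_{X_a}\ul{L}$. Since $\ul{L}=2\partial_u$ satisfies the geodesic condition \eqref{eq_geodesiccondition}, the Christoffel symbols give $\overline{\nabla}_{X_a}\ul{L}=2\overline{\nabla}_{\partial_a}\partial_u=\tfrac1r\partial_a$, which on $\Sigma$ equals $\tfrac1\omega(X_a-\omega_a\ul{L})$; pairing with $X_b$ yields $\ul{\chi}_{ab}=\eta(\overline{\nabla}_{X_a}\ul{L},X_b)=\omega\sigma_{ab}=\tfrac1\omega\gamma_{ab}$, which is (ii), and tracing with $\gamma$ gives (iii). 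To pin down the conjugate null normal I would make the ansatz $L=a\partial_v+b\partial_u+c^a\partial_a$ and impose $\eta(L,L)=0$, $\eta(L,X_b)=0$ and $\eta(\ul{L},L)=2$; solving the resulting algebraic conditions (and expressing everything through $\gamma=\omega^2\sigma$) gives $a=2$, $c^a=-2\nabla^a\omega$ and $b=-2\btr{\nabla\omega}^2$, with gradient and norm taken with respect to $\gamma$. Then $\zeta(X_a)=\tfrac12\eta(\overline{\nabla}_{X_a}\ul{L},L)=\tfrac{1}{2\omega}\eta(\partial_a,L)=-\tfrac{\omega_a}{\omega}$, which is (vi).

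For (iv) and (v) I would expand $\overline{\nabla}_{X_a}X_b$ fully in the coordinate frame via the Christoffel symbols above — the $\partial_u$-component then contains the genuine second derivative $2\partial_a\partial_b\omega$ and the angular component contains $\mathring{\Gamma}^c_{ab}$ together with first-order terms — and then use $\chi_{ab}=-\eta(\overline{\nabla}_{X_a}X_b,L)$ (the analogous projection onto $\ul{L}$ recovers $\ul{\chi}$ and serves as a consistency check). This first expresses $\chi$ in terms of the Hessian of $\omega$ with respect to the background round metric $\sigma$, and the last step is to rewrite it in terms of the Hessian and Laplacian of $(\Sigma,\gamma)$ via the conformal transformation law $(\Hess_\gamma\omega)_{ab}=(\Hess_\sigma\omega)_{ab}-\tfrac{2\omega_a\omega_b}{\omega}+\tfrac{\btr{\nabla\omega}_\sigma^2}{\omega}\sigma_{ab}$; after substituting, all lower-order terms collapse and one obtains $\chi=\tfrac1\omega(1+\btr{\nabla\omega}^2)\gamma-2\Hess\,\omega$, with (v) following by taking $\tr_\gamma$. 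The only genuine difficulty here is bookkeeping: keeping careful track of which metric ($\sigma$ or $\gamma$) each Hessian, gradient and norm refers to, and of the distinction between coordinate and covariant derivatives of $\omega$. I would therefore write out the conformal transformation formulas for the Christoffel symbols, Hessian and Laplacian explicitly before substituting, so that the cancellation of the lower-order terms can be verified term by term.
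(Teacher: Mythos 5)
Your computation is correct, and the details check out: the Christoffel symbols of $\eta$ in null coordinates, the tangent frame $X_a=\partial_a+2\omega_a\partial_u$, the resulting $\gamma_{ab}=\omega^2\sigma_{ab}$, $\ul\chi_{ab}=\tfrac1\omega\gamma_{ab}$, $L=2\partial_v-2|\nabla\omega|^2\partial_u-2\nabla^a\omega\,\partial_a$, $\zeta_a=-\omega_a/\omega$, and (after applying the conformal transformation of the Hessian from $\sigma$ to $\gamma=\omega^2\sigma$) $\chi_{ab}=\tfrac1\omega(1+|\nabla\omega|^2)\gamma_{ab}-2(\Hess_\gamma\omega)_{ab}$ all come out right, and (iii), (v) follow by tracing. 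The paper does not prove this proposition itself but delegates it to the background-foliation calculus of Mars--Soria (their Proposition~1): there one expresses the extrinsic data of a graph $\{r=\omega\}$ in terms of the foliation data $\ul\chi_s,\chi_s,\zeta_s$ of the round spheres, rather than raw Christoffel symbols. Your direct coordinate computation is an equally valid and more self-contained route to the same identities; the two are the same calculation organized differently, with the foliation approach front-loading the conformal bookkeeping that you carry out explicitly in the last step. One small point worth making explicit when you write this up: in the step $\overline\nabla_{X_a}\ul L=2\overline\nabla_{\partial_a}\partial_u$ you are silently dropping the term $4\omega_a\overline\nabla_{\partial_u}\partial_u$, which vanishes precisely because $\ul L=2\partial_u$ is geodesic (all $\Gamma^\mu_{uu}=0$); you do invoke \eqref{eq_geodesiccondition}, but it is worth stating that this is what kills that term.
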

		\begin{bem}\label{bem_minkowskilightcone}
			The fact that the null mean curvature $\ul{\chi}$ depends only pointwise on $\omega$ together with the background foliation of round spheres gives
			\[
				\newbtr{\vec{\two}}^2=\spann{\ul{\chi},\chi}=\frac{1}{2}\mathcal{H}^2,
			\]
			and thus
			\begin{align}\label{eq_gaußcurvature}
				\operatorname{R}=\frac{1}{2}\mathcal{H}^2
			\end{align}
			by the twice contracted Gauß equation Proposition \ref{prop_nullgauß}, which can also be directly verified from (iii) and (v) in Proposition \ref{prop_minkowskilightcone}. Since $\Sigma$ is $2$-dimensional, we can therefore express the Riemann tensor of the surface as
			\begin{align}\label{eq_riemannminkowski}
				\Rm_{ijkl}=\frac{1}{4}\mathcal{H}^2\left(\gamma_{ik}\gamma_{jl}-\gamma_{jk}\gamma_{il}\right).
			\end{align}
			We would like to emphasize here that $\mathcal{H}^2$ refers by definition to the signed Lorentzian length of the mean curvature tensor and can therefore be (locally) negative despite the suggestive power of $2$ as an exponent.
		\end{bem}
		In particular, we always have
		\begin{align}
			\accentset{\circ}{\vec{\two}}=-\frac{1}{2}\accentset{\circ}{\chi}\ul{L},
		\end{align}
		so $\newbtr{\accentset{\circ}{\vec{\two}}}^2=0$ although $\accentset{\circ}{\vec{\two}}\not=0$, and the property of $\vec{\two}$ being pure trace is instead more accurately captured by $\newbtr{\accentset{\circ}{\chi}}^2=0$. Along $\mathcal{N}$, this is made precise by the following lemma.
		\begin{prop}\label{prop_codazziminkowski}
			Let $(\Sigma,\gamma)$ be a spacelike cross section of $\mathcal{N}$ with $\accentset{\circ}{\chi}=0$. Then $\mathcal{H}^2$ is constant and strictly positive along $\Sigma$. In particular, $\gamma$ is a metric of constant scalar curvature.
		\end{prop}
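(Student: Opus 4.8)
The plan is to play the two Codazzi equations of Proposition~\ref{prop_nullcodazzi} against each other. On $\mathcal N$ the null second fundamental form $\ul\chi$ with respect to the generator $\ul L$ is \emph{always} pure trace --- $\ul\chi=\tfrac12\ul\theta\,\gamma$ by Proposition~\ref{prop_minkowskilightcone}(ii)--(iii) --- and the hypothesis $\accentset{\circ}{\chi}=0$ says exactly that $\chi$ is pure trace as well, $\chi=\tfrac12\theta\,\gamma$. Substituting these into the respective Codazzi identities and using $\nabla\gamma=0$, each identity reduces to a relation of the shape $\alpha_i\,\gamma_{jk}=\alpha_j\,\gamma_{ik}$ for a one-form $\alpha$; contracting with $\gamma^{jk}$ and invoking $\dim\Sigma=2$ forces $\alpha\equiv 0$. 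This short computation yields
\begin{align*}
\nabla\ul\theta=\ul\theta\,\zeta,\qquad\nabla\theta=-\theta\,\zeta,
\end{align*}
the opposite signs being exactly those distinguishing the two Codazzi equations (and consistent with parts (iii), (v), (vi) of Proposition~\ref{prop_minkowskilightcone}, though it is cleanest to derive them directly).

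Constancy of $\mathcal H^2$ is then immediate: since $\mathcal H^2=\ul\theta\theta$ by \eqref{eq_secondffnulldecomp},
\begin{align*}
\nabla\mathcal H^2=(\nabla\ul\theta)\,\theta+\ul\theta\,(\nabla\theta)=\ul\theta\theta\,\zeta-\ul\theta\theta\,\zeta=0,
\end{align*}
so $\mathcal H^2$ is locally constant, hence constant on the connected surface $\Sigma$; by \eqref{eq_gaußcurvature} the scalar curvature $\operatorname{R}=\tfrac12\mathcal H^2$ is then constant too, which is the closing "in particular".

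To pin down the sign I would invoke the maximum principle on the conformal factor. Writing $\Sigma=\Sigma_\omega=\{\omega=r\}$ with $\omega\in C^\infty(\Sbb^2)$, $\omega>0$, pick $p\in\Sigma$ where $\omega$ attains its maximum, so $\nabla\omega(p)=0$ and $\Delta\omega(p)\le 0$. Proposition~\ref{prop_minkowskilightcone}(iii),(v) then give $\ul\theta(p)=2/\omega(p)>0$ and $\theta(p)=2/\omega(p)-2\Delta\omega(p)\ge 2/\omega(p)>0$, hence $\mathcal H^2(p)=\ul\theta(p)\theta(p)>0$; being constant, $\mathcal H^2>0$ everywhere. (Alternatively, since $\operatorname{R}=\tfrac12\mathcal H^2$ is constant and $\int_\Sigma\operatorname{R}\d\mu=8\pi>0$ by Gauß--Bonnet, one gets $\operatorname{R}>0$ directly.)

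I do not expect a genuine obstacle: the computation is short, and the only care required is the correct contraction of the Codazzi identities in two dimensions and keeping the two opposite signs straight, after which both constancy and the constant-scalar-curvature conclusion fall out. As an aside, one could alternatively start from the fact that, by Proposition~\ref{prop_minkowskilightcone}(iv), $\accentset{\circ}{\chi}=0$ is equivalent to the concircular-type equation $\accentset{\circ}{\Hess}\,\omega=0$ on $(\Sigma,\gamma)$; this is an appealing reformulation but looks like more work than the Codazzi route.
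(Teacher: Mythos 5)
Your proof is correct, and it takes a genuinely different route from the paper's. Where the paper works exclusively with the Codazzi identity for $\chi$, substitutes the explicit coordinate expression $\zeta=-\d\omega/\omega$, multiplies through by $\ul\theta=2/\omega$, and thereby discovers that $\nabla A$ is totally symmetric for the gauged tensor $A=\ul\theta\chi$ --- an identity it then elevates to Proposition~\ref{prop_codazziminkowski2} and reuses throughout the flow analysis --- you instead feed the pure-trace forms of \emph{both} null second fundamental forms into \emph{both} Codazzi equations, obtaining the pair $\nabla\ul\theta=\ul\theta\,\zeta$ and $\nabla\theta=-\theta\,\zeta$ and killing $\nabla\mathcal H^2=\nabla(\ul\theta\theta)$ by the product rule. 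Your route is more symmetric and slightly more elementary (no explicit coordinate formula for $\zeta$ is needed, only the Codazzi equations and the pure-trace hypotheses; note also that the contraction forcing $\alpha\equiv 0$ works in any dimension $\ge 2$, so ``invoking $\dim\Sigma=2$'' is not actually needed there). The paper's route buys the scalar-valued Codazzi identity for $A$ on \emph{arbitrary} cross sections, which is the structural fact that powers the Simons-type identity and the later maximum-principle arguments; your route, while cleaner for this isolated proposition, would have to be supplemented by that identity anyway. For positivity, the paper appeals to Gau\ss{}--Bonnet; your maximum-principle argument on $\omega$ is equally valid and self-contained (and you correctly note Gau\ss{}--Bonnet as the alternative).
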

		\begin{bem}\label{bem_codazzi}
			Note that
			\begin{align}
				\accentset{\circ}{\chi}=-2\accentset{\circ}{\Hess}_\gamma\omega=2\omega^2\accentset{\circ}{\Hess}_{\Sbb^2}\left(\frac{1}{\omega}\right),
			\end{align}
			where ${\Hess}_{\Sbb^2}$ denotes the Hessian on $(\Sbb^2,\d\Omega^2)$. One can also verify by computation in coordinates that $\accentset{\circ}{\Hess}_{\Sbb^2}\left(\frac{1}{\omega}\right)=0$ if and only if $\omega$ is of the form
			\begin{align}\label{eq_metricconstantscalar}
				\omega(\vec{x})=\frac{c}{\sqrt{1+\norm{\vec{a}}^2}+\vec{a}\cdot\vec{x}}
			\end{align}
			for $\vec{x}\in\Sbb^2$ and a fixed vector $\vec{a}\in\R^3$, which are exactly the metrics of constant scalar curvature on $\Sbb^2$. Hence, the converse statement of Proposition \ref{prop_codazziminkowski} is also true. It is a well-known fact that all such metrics can be obtained from the round metric by a suitable M\"obius transformation, cf. \cite[Proposition 6]{marssoria}, \cite[Section 5.2]{wang}. Moreover, the metrics \eqref{eq_metricconstantscalar} describe exactly the images of round spheres after a suitable Lorentz transformation in $\operatorname{SO}^+(3,1)$ in the Minkowski spacetime, which leave the lightcone $\mathcal{N}$ invariant. These observations illustrate once again the well-known fact that the M\"obius group is isomorphic to the restricted Lorentz group $\operatorname{SO}^+(3,1)$.
		\end{bem}
		\begin{proof}[Proof of Proposition \ref{prop_codazziminkowski}]
			Combining the Codazzi equation for $\chi$ from Proposition \ref{prop_nullcodazzi} with the explicit form of $\zeta$ from Proposition \ref{prop_minkowskilightcone}, we find
			\begin{align}
				\nabla_i\chi_{jk}-\frac{\d \omega_i}{\omega}\chi_{jk}=\nabla_j\chi_{ik}-\frac{\d\omega_j}{\omega}\chi_{ik}.
			\end{align}
			Multiplying the equation by $\ul{\theta}=\frac{2}{\omega}>0$, we get
			\begin{align}
				\nabla_{i}\left(\ul{\theta}\chi\right)_{jk}=\nabla_{j}\left(\ul{\theta}\chi\right)_{jk}.
			\end{align}
			Hence $\nabla\left(\ul{\theta}\chi\right)$ is totally symmetric and since $\tr_\gamma \ul{\theta}\chi=\mathcal{H}^2$, we find
			\[
				\nabla_i\mathcal{H}^2=\dive \left(\ul{\theta}\chi\right)_i=\frac{1}{2}\nabla_i\mathcal{H}^2+\dive\accentset{\circ}{\left(\ul{\theta}\chi\right)}_i=\frac{1}{2}\nabla_i\mathcal{H}^2
			\]
			by assumption. Therefore $\mathcal{H}^2$ is constant, in particular $\gamma$ is a metric of constant scalar curvature by the Gauß equation \eqref{eq_gaußcurvature}. Finally, the Gauß--Bonnet Theorem ensures the positivity of $\operatorname{R}$ and hence $\mathcal{H}^2$.
		\end{proof}
		Motivated by this, we will choose the symmetric $(0,2)$-form $A\definedas \ul{\theta}\chi$ as a scalar valued representation of the vector valued second fundamental form $\vec{\two}$. This can be regarded as a choice of gauge. Rephrasing Proposition \ref{prop_codazziminkowski} in terms of $A$, we see that we can prove the following identity in complete analogy to the properties of the scalar valued second fundamental form $h$ of an embedded, orientable surface in $\R^3$.
		\begin{prop}\label{prop_codazziminkowski2}
			Let $(\Sigma,\gamma)$ be a spacelike cross section of $\mathcal{N}$. Then $\nabla A$ is totally symmetric, i.e.,
			\begin{align}
				\nabla_iA_{jk}=\nabla_jA_{ik}.
			\end{align}
			In particular, we find
			\begin{align}\label{eq_Agradientestimate}
				\btr{\nabla A}^2\ge \frac{3}{4}\btr{\nabla\mathcal{H}^2}^2,
			\end{align}
			and $\accentset{\circ}{A}=0$ if and only if $\mathcal{H}^2$ is a strictly positive constant.
		\end{prop}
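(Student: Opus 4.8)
The plan is to derive everything from the Codazzi equation for $\chi$ in Proposition \ref{prop_nullcodazzi} together with the explicit form $\zeta=-\d\omega/\omega$ from Proposition \ref{prop_minkowskilightcone}(vi). First I would substitute $\zeta_i=-\d\omega_i/\omega$ into the Codazzi identity $\nabla_i\chi_{jk}-\nabla_j\chi_{ik}=\zeta_j\chi_{ik}-\zeta_i\chi_{jk}$ to get $\nabla_i\chi_{jk}-\frac{\d\omega_i}{\omega}\chi_{jk}=\nabla_j\chi_{ik}-\frac{\d\omega_j}{\omega}\chi_{ik}$, exactly as in the proof of Proposition \ref{prop_codazziminkowski}. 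Multiplying through by $\ul{\theta}=2/\omega$ and noting that $\nabla_i\ul{\theta}=-\frac{2}{\omega^2}\d\omega_i=-\ul{\theta}\frac{\d\omega_i}{\omega}$, the left side becomes $\nabla_i(\ul{\theta}\chi)_{jk}=\nabla_iA_{jk}$ and similarly on the right, so $\nabla_iA_{jk}=\nabla_jA_{ik}$. Since $A$ is symmetric in its last two indices by construction, this shows $\nabla A$ is totally symmetric.

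Next I would establish the gradient estimate \eqref{eq_Agradientestimate}. Write $A=\accentset{\circ}{A}+\tfrac12\mathcal{H}^2\gamma$, using that $\tr_\gamma A=\tr_\gamma(\ul{\theta}\chi)=\ul{\theta}\theta=\mathcal{H}^2$ and $\dim\Sigma=2$. Then $\nabla A=\nabla\accentset{\circ}{A}+\tfrac12(\nabla\mathcal{H}^2)\otimes\gamma$. The key point is that these two pieces are pointwise orthogonal: the trace-free tensor $\nabla_i\accentset{\circ}{A}_{jk}$ contracts against $\gamma^{jk}$ to zero, so $\langle\nabla\accentset{\circ}{A},(\nabla\mathcal{H}^2)\otimes\gamma\rangle=0$. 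Hence $\btr{\nabla A}^2=\btr{\nabla\accentset{\circ}{A}}^2+\tfrac14\btr{(\nabla\mathcal{H}^2)\otimes\gamma}^2=\btr{\nabla\accentset{\circ}{A}}^2+\tfrac14\cdot 2\cdot\btr{\nabla\mathcal{H}^2}^2$. To finish I would use total symmetry of $\nabla A$ to get a \emph{lower} bound on $\btr{\nabla\accentset{\circ}{A}}^2$ in terms of $\btr{\nabla\mathcal{H}^2}^2$: taking the trace of the total-symmetry relation over two indices gives $\nabla_i\mathcal{H}^2 = \dive(A)_i$, hence $\dive\accentset{\circ}{A}_i=\tfrac12\nabla_i\mathcal{H}^2$; comparing $\btr{\dive\accentset{\circ}{A}}^2\le C\btr{\nabla\accentset{\circ}{A}}^2$ with the right constant in dimension $2$ yields $\btr{\nabla\accentset{\circ}{A}}^2\ge\tfrac14\btr{\nabla\mathcal{H}^2}^2$, and adding the $\tfrac12\btr{\nabla\mathcal{H}^2}^2$ from the trace part gives $\btr{\nabla A}^2\ge\tfrac34\btr{\nabla\mathcal{H}^2}^2$. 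This norm bookkeeping — pinning down the exact dimensional constants in $\btr{(\nabla\mathcal{H}^2)\otimes\gamma}^2$ and in the divergence-vs-full-derivative inequality for the symmetric trace-free $2$-tensor $\accentset{\circ}{A}$ — is the part I expect to require the most care, since an off-by-a-factor there changes the claimed $\tfrac34$.

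For the final equivalence, one direction is immediate from Proposition \ref{prop_codazziminkowski}: its proof shows precisely that $\accentset{\circ}{A}=\accentset{\circ}{(\ul{\theta}\chi)}=\ul{\theta}\accentset{\circ}{\chi}$ (since $\ul{\theta}$ is scalar), so $\accentset{\circ}{\chi}=0$ iff $\accentset{\circ}{A}=0$ because $\ul{\theta}=2/\omega>0$; and by Proposition \ref{prop_codazziminkowski} (and Remark \ref{bem_codazzi} for the converse) $\accentset{\circ}{\chi}=0$ holds iff $\mathcal{H}^2$ is a strictly positive constant. Concretely: if $\accentset{\circ}{A}=0$ then $\accentset{\circ}{\chi}=0$, so by Proposition \ref{prop_codazziminkowski} $\mathcal{H}^2$ is a positive constant and $\gamma$ has constant scalar curvature; conversely, if $\mathcal{H}^2$ is a positive constant then by \eqref{eq_gaußcurvature} $\operatorname{R}$ is constant, $\gamma$ is one of the metrics \eqref{eq_metricconstantscalar} by Remark \ref{bem_codazzi}, for which $\accentset{\circ}{\Hess}_{\Sbb^2}(1/\omega)=0$ and hence $\accentset{\circ}{\chi}=0$ by the displayed formula in Remark \ref{bem_codazzi}, whence $\accentset{\circ}{A}=\ul{\theta}\accentset{\circ}{\chi}=0$. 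Alternatively, one can see the equivalence directly from \eqref{eq_Agradientestimate}: if $\accentset{\circ}{A}=0$ then $A=\tfrac12\mathcal{H}^2\gamma$ and total symmetry of $\nabla A$ forces $\nabla\mathcal{H}^2\otimes\gamma$ to be totally symmetric, which in dimension $2$ is only possible if $\nabla\mathcal{H}^2=0$; positivity then follows from Gauß--Bonnet as in Proposition \ref{prop_codazziminkowski}. This completes the proof.
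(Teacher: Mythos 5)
Your proof of total symmetry is exactly the paper's argument (carried out verbatim in the proof of Proposition~\ref{prop_codazziminkowski}), and your treatment of the equivalence $\accentset{\circ}{A}=0\Leftrightarrow\mathcal{H}^2$ positive constant is also correct — both via Proposition~\ref{prop_codazziminkowski}/Remark~\ref{bem_codazzi} and via your alternative argument forcing $\nabla\mathcal{H}^2\otimes\gamma$ to be totally symmetric.

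The gap is in the gradient estimate, and it is exactly where you flagged it. Your orthogonal splitting $\btr{\nabla A}^2 = \btr{\nabla\accentset{\circ}{A}}^2 + \tfrac12\btr{\nabla\mathcal{H}^2}^2$ is correct, but you then need $\btr{\nabla\accentset{\circ}{A}}^2 \ge \btr{\dive\accentset{\circ}{A}}^2$, i.e.\ constant $C=1$ in the divergence inequality. The naive Cauchy--Schwarz bound for a general $(0,2)$-tensor is $\btr{\dive T}^2 \le n\btr{\nabla T}^2 = 2\btr{\nabla T}^2$, which would only give $\btr{\nabla A}^2 \ge \tfrac58\btr{\nabla\mathcal{H}^2}^2$ — not enough. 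The improved constant $C=1$ does in fact hold for \emph{symmetric trace-free} $(0,2)$-tensors in dimension $2$: writing $\nabla_i\accentset{\circ}{A}_{jk}$ in an orthonormal frame as $\begin{pmatrix}a_i & b_i\\ b_i & -a_i\end{pmatrix}$ one checks $\btr{\nabla\accentset{\circ}{A}}^2 - \btr{\dive\accentset{\circ}{A}}^2 = (a_1-b_2)^2 + (b_1+a_2)^2 \ge 0$, so your plan does close — but you never supply this computation, and without it the factor $\tfrac34$ is unsubstantiated. The cleaner and more standard route (the ``analogy to the Euclidean second fundamental form'' the paper alludes to) avoids the detour through $\accentset{\circ}{A}$ entirely: since $\nabla A$ is totally symmetric and $\dive A = \nabla\tr_\gamma A = \nabla\mathcal{H}^2$, decompose
\[
\nabla_iA_{jk} = E_{ijk} + \tfrac{1}{n+2}\bigl(\nabla_i\mathcal{H}^2\gamma_{jk} + \nabla_j\mathcal{H}^2\gamma_{ik} + \nabla_k\mathcal{H}^2\gamma_{ij}\bigr)
\]
with $E$ totally symmetric and trace-free; orthogonality and $\bigl|\nabla_i\mathcal{H}^2\gamma_{jk}+\text{perm.}\bigr|^2 = 3(n+2)\btr{\nabla\mathcal{H}^2}^2$ then give $\btr{\nabla A}^2 = \btr{E}^2 + \tfrac{3}{n+2}\btr{\nabla\mathcal{H}^2}^2 \ge \tfrac34\btr{\nabla\mathcal{H}^2}^2$ for $n=2$ in one step, with the constant $\tfrac34$ appearing transparently.
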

		We further derive the propagation equations for the geometric objects $A$ and $\mathcal{H}^2$ from Lemma \ref{lem_propagation} and Proposition \ref{prop_minkowskilightcone}:
		\begin{lem}\label{lem_Apropagation}
			\begin{align*}
			\frac{\d}{\d t}A_{ij}
			=&-2\Hess_{ij}(\ul{\theta}\varphi),\\
			\frac{\d}{\d t}\mathcal{H}^2
			=&-2\Delta(\ul{\theta}\varphi)-(\ul{\theta}\varphi)\mathcal{H}^2.
			\end{align*}
		\end{lem}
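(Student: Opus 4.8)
The plan is to differentiate $A=\ul\theta\chi$ by the product rule, substitute the propagation equations of Lemma \ref{lem_propagation}, and simplify using the explicit geometry of $\mathcal N$ recorded in Proposition \ref{prop_minkowskilightcone}. The structural input that makes everything collapse is that on any cross section of $\mathcal N$ one has $\ul\theta=\tfrac2\omega$ and $\zeta=-\tfrac{\d\omega}{\omega}$ by Proposition \ref{prop_minkowskilightcone}(iii),(vi), hence
\[
\nabla\ul\theta=\ul\theta\,\zeta,\qquad\text{i.e.}\qquad \zeta=\nabla\log\ul\theta,
\]
and $\ul\chi=\tfrac1\omega\gamma=\tfrac12\ul\theta\,\gamma$ by (ii), which gives $\newbtr{\ul\chi}^2=\tfrac12\ul\theta^2$, $\ul\chi^{ij}=\tfrac12\ul\theta\,\gamma^{ij}$, and $(\chi\cdot\ul\chi)_{ij}=\tfrac12\ul\theta\,\chi_{ij}=\tfrac12 A_{ij}$ (using that $\ul\chi$ is pure trace on $\mathcal N$).

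First I would expand the target: writing $\nabla_j(\ul\theta\varphi)=\ul\theta\,\nabla_j\varphi+\varphi\,\ul\theta\,\zeta_j$ and differentiating once more,
\[
\Hess_{ij}(\ul\theta\varphi)=\ul\theta\,\Hess_{ij}\varphi+\ul\theta\big(\nabla_i\varphi\,\zeta_j+\nabla_j\varphi\,\zeta_i\big)+\varphi\,\ul\theta\big(\nabla_i\zeta_j+\zeta_i\zeta_j\big).
\]
Comparing this with $\ul\theta$ times Lemma \ref{lem_propagation}(iv) shows that all of the inhomogeneous terms of $\ul\theta\,\tfrac{\d}{\d t}\chi_{ij}$ reassemble into $-2\Hess_{ij}(\ul\theta\varphi)$ except for the single leftover $\varphi\,\ul\theta\,(\chi\cdot\ul\chi)_{ij}=\tfrac12\varphi\,\ul\theta\,A_{ij}$. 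Meanwhile Lemma \ref{lem_propagation}(v) with $\newbtr{\ul\chi}^2=\tfrac12\ul\theta^2$ gives $\tfrac{\d}{\d t}\ul\theta=-\tfrac12\varphi\,\ul\theta^2$, so $(\tfrac{\d}{\d t}\ul\theta)\chi_{ij}=-\tfrac12\varphi\,\ul\theta\,A_{ij}$. Adding the two contributions, the $\pm\tfrac12\varphi\,\ul\theta\,A_{ij}$ terms cancel and we are left with $\tfrac{\d}{\d t}A_{ij}=-2\Hess_{ij}(\ul\theta\varphi)$.

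For the second identity I would simply trace. Since $\mathcal H^2=\tr_\gamma A=\gamma^{ij}A_{ij}$ and, by Lemma \ref{lem_propagation}(i) together with $\ul\chi^{ij}=\tfrac12\ul\theta\,\gamma^{ij}$, we have $\tfrac{\d}{\d t}\gamma^{ij}=-2\varphi\,\ul\chi^{ij}=-(\ul\theta\varphi)\,\gamma^{ij}$, the product rule gives
\[
\frac{\d}{\d t}\mathcal H^2=\Big(\tfrac{\d}{\d t}\gamma^{ij}\Big)A_{ij}+\gamma^{ij}\,\tfrac{\d}{\d t}A_{ij}=-(\ul\theta\varphi)\,\mathcal H^2-2\Delta(\ul\theta\varphi),
\]
as claimed. (One could instead differentiate $\mathcal H^2=\ul\theta\,\theta$ via Lemma \ref{lem_propagation}(v),(vi) and Proposition \ref{prop_minkowskilightcone}, but the route through $A$ avoids handling the $\dive\zeta$ and $\btr{\zeta}^2$ terms.)

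The argument is essentially bookkeeping and I do not expect a genuine obstacle. The one point requiring care is the passage through Lemma \ref{lem_propagation}(iv),(v): one must consistently use $\newbtr{\ul\chi}^2=\tfrac12\ul\theta^2$ and $(\chi\cdot\ul\chi)=\tfrac12 A$, together with the sign conventions for $\ul L$ and $\zeta$, so that the two $\tfrac12\varphi\,\ul\theta\,A$ contributions—one coming from $\tfrac{\d}{\d t}\ul\theta$, one hidden inside $\tfrac{\d}{\d t}\chi$—genuinely cancel, leaving the clean heat-type form stated in the lemma.
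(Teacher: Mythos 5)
Your proof is correct and follows essentially the same route as the paper: differentiate $A=\ul\theta\chi$ by the product rule, substitute Lemma \ref{lem_propagation}(iv),(v) together with $\ul\chi=\tfrac12\ul\theta\gamma$, observe the cancellation of the two $\pm\tfrac12\varphi\ul\theta A_{ij}$ contributions, recombine the remaining terms into $-2\Hess_{ij}(\ul\theta\varphi)$ via $\nabla\ul\theta=\ul\theta\zeta$, and trace. The only difference is that you spell out the expansion of $\Hess_{ij}(\ul\theta\varphi)$ explicitly, where the paper simply asserts that the terms combine.
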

		\begin{proof}
			From Lemma \ref{lem_propagation} (iv) and (v), and $\ul{\chi}=\frac{1}{2}\ul{\theta}\gamma$, we compute
			\begin{align*}
				\frac{\d}{\d t}A_{ij}&=-2\ul{\theta}\Hess_{ij}\varphi-2\ul{\theta}(\d\varphi_i\otimes\zeta_j+\d\varphi_j\otimes\zeta_i)-\varphi\ul{\theta}\left(2\nabla_i\zeta_j+2\zeta_i\otimes\zeta_j-\frac{1}{2}A_{ij}\right)-\frac{1}{2}\varphi\ul{\theta}A_{ij}\\
				&=-2\ul{\theta}\Hess_{ij}\varphi-2\ul{\theta}(\d\varphi_i\otimes\zeta_j+\d\varphi_j\otimes\zeta_i)-\varphi\ul{\theta}\left(2\nabla_i\zeta_j+2\zeta_i\otimes\zeta_j\right).
			\end{align*}
			We now observe that the remaining terms on the right hand side exactly combine into $-2\Hess_{ij}\left(\ul{\theta}\varphi\right)$ due to the explicit formulas for $\ul{\theta}$ and $\zeta$ listed in Proposition \ref{prop_minkowskilightcone}. Taking a trace, where $\frac{\d}{\d t}\gamma^{ij}=-\varphi\ul{\theta}\gamma^{ij}$, completes the proof.
		\end{proof}
		We close this section by establishing a null version of the Simons' identity for $A$ in the $3+1$-Minkowski lightcone $\mathcal{N}$, which will be crucial for our later analysis.
		\begin{lem}[Null Simons' Identity]\label{lem_nullsimon}
			\begin{align*}
			\Delta A_{ij}=\Hess_{ij}\mathcal{H}^2+\frac{1}{2}\mathcal{H}^2\accentset{\circ}{A}_{ij}.
			\end{align*}
		\end{lem}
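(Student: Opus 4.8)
The plan is to derive the identity by combining the total symmetry of $\nabla A$ (Proposition \ref{prop_codazziminkowski2}) with a commutation of covariant derivatives, in exact analogy with the classical Simons' identity for hypersurfaces in $\R^3$. Since $\nabla A$ is totally symmetric, I would start from $\nabla_k A_{ij} = \nabla_i A_{kj}$ and apply $\nabla^k$ to obtain
\begin{align*}
\Delta A_{ij} = \nabla^k\nabla_k A_{ij} = \nabla^k\nabla_i A_{kj}.
\end{align*}
The next step is to commute $\nabla^k$ past $\nabla_i$, which introduces two Riemann curvature terms:
\begin{align*}
\nabla^k\nabla_i A_{kj} = \nabla_i\nabla^k A_{kj} + \Rm^k{}_{i}{}^{l}{}_{k}A_{lj} + \Rm^k{}_{i}{}^{l}{}_{j}A_{kl}.
\end{align*}
On the first term I would again use total symmetry, $\nabla^k A_{kj} = \nabla_j(\tr_\gamma A) = \nabla_j \mathcal{H}^2$ (using $\tr_\gamma A = \tr_\gamma(\ul\theta\chi) = \ul\theta\theta = \mathcal{H}^2$), so $\nabla_i\nabla^k A_{kj} = \Hess_{ij}\mathcal{H}^2$, which is the desired leading term.

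It remains to evaluate the two curvature terms. The crucial simplification is that on a $2$-surface the Riemann tensor is completely determined by the scalar curvature, and by Remark \ref{bem_minkowskilightcone} (equation \eqref{eq_riemannminkowski}) we have $\Rm_{ijkl} = \frac{1}{4}\mathcal{H}^2(\gamma_{ik}\gamma_{jl} - \gamma_{jk}\gamma_{il})$. Substituting this into both curvature terms and contracting, the first contracts to (a multiple of) $\mathcal{H}^2 A_{ij}$ and the second to (a multiple of) $\mathcal{H}^2(\tr_\gamma A)\gamma_{ij} = \mathcal{H}^4\gamma_{ij}$; collecting these and rewriting in terms of the trace-free part $\accentset{\circ}{A} = A - \tr_\gamma A\,\gamma = A - \mathcal{H}^2\gamma$ should produce exactly $\frac{1}{2}\mathcal{H}^2\accentset{\circ}{A}_{ij}$. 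I would double-check signs and index positions carefully here — that is really the only place an error can creep in.

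The main obstacle is bookkeeping rather than conceptual: getting the index contractions of the two curvature terms right (including the antisymmetry of $\Rm$ in the first and last pair of indices, and the precise curvature convention fixed in the preliminaries) and verifying that the combination genuinely collapses to the trace-free part with the stated coefficient $\frac{1}{2}$. As a sanity check one can test the identity against the known solutions: when $\accentset{\circ}{A}=0$, Proposition \ref{prop_codazziminkowski2} says $\mathcal{H}^2$ is a positive constant, so both sides vanish, which is consistent. One could alternatively give a slicker argument by noting that $A$ plays the role of the scalar second fundamental form of a surface in a space form (the Gauß equation \eqref{eq_gaußcurvature} reads $\operatorname{R} = \frac{1}{2}\mathcal{H}^2 = \tr_\gamma A \cdot \frac{1}{2}$ in the normalization where $A$ has ``trace'' $\mathcal{H}^2$), and then quote the standard Simons' identity; but since the constants differ from the Euclidean hypersurface case, I would prefer the direct computation above to keep the coefficients honest.
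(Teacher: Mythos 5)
Your proposal is correct and follows essentially the same route as the paper: use total symmetry of $\nabla A$ (the null Codazzi relation), commute covariant derivatives, and substitute the constant-curvature form \eqref{eq_riemannminkowski} of the Riemann tensor; the paper simply writes the argument at the level of $\nabla_k\nabla_l A_{ij}=\nabla_i\nabla_j A_{kl}+\Rm\ast A$ and then traces over $kl$, rather than contracting at the outset as you do. One small caution when you "double-check the constants": despite the displayed definition $\accentset{\circ}{T}=T-\tr_\gamma T\,\gamma$ in the preliminaries, the trace-free part actually used throughout (and needed to make the coefficient $\tfrac12$ come out) is $\accentset{\circ}{A}=A-\tfrac12\mathcal{H}^2\gamma$, i.e.\ with the dimensional factor $\tfrac1n=\tfrac12$.
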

		\begin{proof}
			In the following lines, we will make frequent use of the Codazzi equation for A, Lemma \ref{prop_codazziminkowski2}, and the fact that for any symmetric $(0,2)$ tensor $T$, we have that
			\[
			\nabla_k\nabla_lT_{ij}-\nabla_l\nabla_kT_{ij}=\Rm_{kljm}T^m_i+\Rm_{klim}T^m_j.
			\]
			Thus, we compute
			\begin{align*}
			\nabla_k\nabla_lA_{ij}
			=&\nabla_k\left(\nabla_iA_{lj}\right)\\
			=&\nabla_i\nabla_k A_{jl}+\Rm_{kilm}A^m_j+\Rm_{kijm}A^m_l\\
			=&\nabla_i(\nabla_j A_{kl})+\Rm_{kilm}A^m_j+\Rm_{kijm}A^m_l\\
			=&\nabla_i\nabla_j A_{kl}+\frac{1}{4}\mathcal{H}^2\left(\left(\gamma_{kl}\gamma_{im}-\gamma_{il}\gamma_{km}\right)A^m_j+\left(\gamma_{kj}\gamma_{im}-\gamma_{ij}\gamma_{km}\right)A^m_l\right)\\
			=&\nabla_i\nabla_j A_{kl}+\frac{1}{4}\mathcal{H}^2\left(A_{ij}\gamma_{kl}+A_{il}\gamma_{kj}-A_{kj}\gamma_{il}-A_{kl}\gamma_{ij}\right),
			\end{align*}
			where we have used \eqref{eq_riemannminkowski} in the second to last line. Taking a trace with respect to the $kl$ entries yields the claim.
		\end{proof}
	\section{$2d$-Ricci flow along the standard Minkowski lightcone}\label{sec_equialence}
	We will now investigate null mean curvature flow restricted to the (past-pointing) standard lightcone in the $3+1$-Minkowski spacetime. Recall that null mean curvature flow along null hypersurfaces is defined as
	\[
	\frac{\d}{\d t}x=\frac{1}{2}\spann{\vec{\mathcal{H}},\ul{L}}\ul{L}=-\frac{1}{2}\theta\ul{L},
	\]
	as first studied by Roesch--Scheuer \cite{roeschscheuer}. Note that since $\ul{L}(r)=1$, the above is equivalent to the following evolution equation for $\omega$
	\begin{align}\label{eq_nullmeancurvatureflow}
	\frac{\d}{\d t}\omega=-\frac{1}{2}\theta.
	\end{align}
	Recall that $\accentset{\circ}{\Ric}=0$ in dimension $2$, so $2d$-Ricci flow agrees with the Yamabe flow \cite{hamilton2} and naturally preserves the conformal class of the metric. More explicitly, in our case
	\begin{align*}
	\frac{\d}{\d t}\gamma_{ij}=-2\Ric_{ij}&\Leftrightarrow \frac{\d}{\d t}\left(\omega^2\d\Omega^2\right)=-2\mathcal{K}\omega^2\d\Omega^2\\
	&\Leftrightarrow \frac{2}{\omega}\left(\frac{\d}{\d t}\omega\right)\gamma_{ij}=-2\mathcal{K}\gamma_{ij}\\
	&\Leftrightarrow \frac{\d}{\d t}\omega=-\omega\mathcal{K},
	\end{align*}
	where $\mathcal{K}$ denotes the Gauß curvature. Note that by the twice contracted Gauß equation \eqref{eq_gaußcurvature} and the explicit form of $\ul{\theta}$, we have that $\theta=2\omega\mathcal{K}$. Therefore, $2$-dimensional Ricci flow in the conformal class of the round sphere is equivalent to null mean curvature flow on the past-pointing standard lightcone in the $3+1$-Minkowski spacetime. Since $2d$-Ricci flow and its renormalized equation are fully understood in this case, we find the following corollary as a consequence of the Gauß equation \eqref{eq_gaußcurvature}, Remark \ref{bem_codazzi}, and a classical result of $2d$-Ricci flow first proven by Hamilton \cite{hamilton1}, where the initial restriction to metrics of strictly positive scalar curvature in the case of surfaces of genus $0$ was later removed by Chow \cite{chow1}:
	\begin{thm}[Hamilton and Chow, {\cite[Corollary 1.3]{chow1}}]
		If $g$ is any metric on a Riemann surface, then under Hamilton's Ricci flow, $g$ converges to a metric of constant curvature.
	\end{thm}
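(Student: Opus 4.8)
The plan is to prove the genus-$0$ case via the lightcone picture developed above; for genus $\ge 1$ the lightcone construction is unavailable, and I would invoke Hamilton's classical results for those topologies directly. So let $\Sigma\cong\Sbb^2$ carry an arbitrary metric in the conformal class of $\d\Omega^2$; by Proposition \ref{prop_minkowskilightcone} it is realized by a unique spacelike cross section $\Sigma_\omega$ of $\mathcal{N}$, and I run null mean curvature flow \eqref{eq_nullmeancurvatureflow} (short-time existence being standard parabolic theory), which by \Cref{sec_equialence} is Ricci flow in that conformal class. The central object is the gauged second fundamental form $A=\ul{\theta}\chi$ of Proposition \ref{prop_codazziminkowski2}: one has $\tr_\gamma A=\mathcal{H}^2=2\operatorname{R}$, and by that proposition $A$ is pure trace exactly when $\operatorname{R}$ is a positive constant. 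Hence it is enough to show that, after area normalization, $\accentset{\circ}{A}\to 0$. The strategy is: (1) show $\newbtr{\accentset{\circ}{A}}^2$ satisfies a reaction-free diffusion inequality, so $\max_\Sigma\newbtr{\accentset{\circ}{A}}^2$ is non-increasing along the unnormalized flow; (2) combine finite-time extinction with the fact that $A$ is scale-invariant to deduce $\newbtr{\accentset{\circ}{A}}^2\to 0$ under the normalized flow; (3) extract a smooth limit and upgrade subconvergence to convergence.

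For step (1), I would differentiate $\accentset{\circ}{A}_{ij}=A_{ij}-\tfrac12\mathcal{H}^2\gamma_{ij}$ using Lemma \ref{lem_Apropagation} with $\varphi=-\tfrac12\theta$ (so that $\ul{\theta}\varphi=-\tfrac12\mathcal{H}^2$) together with $\tfrac{\d}{\d t}\gamma_{ij}=-\tfrac12\mathcal{H}^2\gamma_{ij}$. The pointwise $(\mathcal{H}^2)^2$-terms cancel, leaving $\tfrac{\d}{\d t}\accentset{\circ}{A}_{ij}=\accentset{\circ}{\Hess}_{ij}\mathcal{H}^2$; the Null Simons' identity of Lemma \ref{lem_nullsimon} then turns the right-hand side into $\Delta\accentset{\circ}{A}_{ij}-\tfrac12\mathcal{H}^2\accentset{\circ}{A}_{ij}$. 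Contracting with $\accentset{\circ}{A}$ and using $\tfrac{\d}{\d t}\gamma^{ij}=\tfrac12\mathcal{H}^2\gamma^{ij}$, the $\mathcal{H}^2$-terms cancel once more, yielding
\begin{align*}
\frac{\d}{\d t}\newbtr{\accentset{\circ}{A}}^2=\Delta\newbtr{\accentset{\circ}{A}}^2-2\newbtr{\nabla\accentset{\circ}{A}}^2\le\Delta\newbtr{\accentset{\circ}{A}}^2 .
\end{align*}
By the maximum principle $\max_\Sigma\newbtr{\accentset{\circ}{A}}^2$ is non-increasing, so $\newbtr{\accentset{\circ}{A}}^2\le C_0:=\max_{\Sigma}\newbtr{\accentset{\circ}{A}}_{\gamma(0)}^2$ for all $t$. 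Crucially there is no reaction term here, so no positivity of $\operatorname{R}$ is required -- this is precisely what lets the argument cover Chow's case as well.

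For step (2), from $\tfrac{\d}{\d t}\d\mu=\varphi\ul{\theta}\,\d\mu=-\tfrac12\mathcal{H}^2\d\mu$ and Gauß--Bonnet I get $\tfrac{\d}{\d t}|\Sigma(t)|=-\int_\Sigma\operatorname{R}\,\d\mu=-8\pi$, so $|\Sigma(t)|=|\Sigma(0)|-8\pi t$ and the flow extinguishes at $T=|\Sigma(0)|/(8\pi)$. After area normalization $\tilde\gamma(t)=\lambda(t)\gamma(t)$, $\lambda(t)=|\Sigma(0)|/(|\Sigma(0)|-8\pi t)\to\infty$, and a time reparametrization, the flow exists for all time. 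The key observation is that $A$ is unchanged by this rescaling: the cross section of $\lambda\gamma$ is $\Sigma_{\sqrt{\lambda}\,\omega}$, and Proposition \ref{prop_minkowskilightcone} gives $\ul{\theta}\mapsto\lambda^{-1/2}\ul{\theta}$ and $\chi\mapsto\lambda^{1/2}\chi$, hence $A=\ul{\theta}\chi\mapsto A$ as a $(0,2)$-tensor; thus $\accentset{\circ}{\tilde A}=\accentset{\circ}{A}$, while each of the two inverse-metric contractions in the norm brings a factor $\lambda^{-1}$, so that
\begin{align*}
\newbtr{\accentset{\circ}{\tilde A}}_{\tilde\gamma}^2=\lambda(t)^{-2}\newbtr{\accentset{\circ}{A}}_{\gamma}^2\le C_0\,\lambda(t)^{-2}\longrightarrow 0\qquad(t\to T).
\end{align*}
So the normalized flow becomes totally umbilic in the limit; this scale-invariant bound takes over the role of Hamilton's entropy estimate in the classical proof.

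For step (3), I would show the normalized flow has uniformly bounded geometry, extract a smooth limit by Cheeger--Gromov compactness, observe it satisfies $\accentset{\circ}{A}=0$ and hence has constant positive curvature by Proposition \ref{prop_codazziminkowski2} -- so it is round -- and finally use local uniqueness and stability of Ricci flow near the round metric, together with the monotonicity of $|\Sigma(t)|$, to upgrade subconvergence to convergence of the whole normalized flow. \textbf{The hard part} will be the uniform control of the \emph{trace} $\mathcal{H}^2=\tr_\gamma A$ of the normalized second fundamental form -- and thereby of the normalized curvature -- since step (1) only controls the trace-free part, while $\mathcal{H}^2$ obeys the reaction--diffusion equation $\tfrac{\d}{\d t}\mathcal{H}^2=\Delta\mathcal{H}^2+\tfrac12(\mathcal{H}^2)^2$ (plus a normalization term) whose super-linear reaction term admits no naive maximum-principle upper bound. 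Here I would feed in the smallness of $\accentset{\circ}{A}$: the Codazzi-type gradient estimate \eqref{eq_Agradientestimate} bounds $\nabla\mathcal{H}^2$ in terms of $\nabla\accentset{\circ}{A}$, and combining this with integral and Moser-type iteration (in the spirit of the $3d$ Ricci flow and mean curvature flow references cited in the introduction) should produce two-sided bounds on $\mathcal{H}^2$ and rule out concentration or collapse; Shi-type higher-derivative estimates then complete the compactness argument.
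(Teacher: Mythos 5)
You have set out to prove a statement that the paper does not actually prove: the theorem you are addressing is cited from Chow (\cite[Corollary 1.3]{chow1}), and the paper's own contribution, Theorem~\ref{thm_mainthm}, is only the conformally round case \emph{with} $\operatorname{R}>0$. So a fair comparison is between your sketch and the paper's proof of Theorem~\ref{thm_mainthm}, together with the extra ground you are trying to cover. Your step~(1) is correct and is, in disguise, the same differential inequality the paper uses in the proof of Lemma~\ref{lem_renormalizedflow}~(v)--(vi): computing from Proposition~\ref{prop_nullmeancurvature1} and the identity $\newbtr{\nabla\accentset{\circ}{A}}^2 = \btr{\nabla A}^2 - \tfrac{1}{2}\btr{\nabla\mathcal{H}^2}^2$ gives $(\partial_t-\Delta)\newbtr{\accentset{\circ}{A}}^2 = -2\newbtr{\nabla\accentset{\circ}{A}}^2 \le 0$, and the maximum principle does give a sign-free, scale-invariant $L^\infty$-bound on $\accentset{\circ}{A}$ along the unnormalized flow. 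That is a genuinely clean observation and, unlike the paper's scale-breaking pinching estimate (Theorem~\ref{thm_nullmeancurvature1}), it requires no positivity assumption.

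However, the proposal has a genuine gap precisely where you flag it, and the gap is not peripheral --- it is the substance of the theorem. Your step~(2) already assumes that the flow survives up to $T=|\Sigma(0)|/(8\pi)$; in general one only knows a priori that $T_{\max}\le |\Sigma(0)|/(8\pi)$, and ruling out earlier blow-up is part of what must be proven. More seriously, even granting finite-time extinction, your steps~(2)--(3) produce only $\max_\Sigma\newbtr{\accentset{\circ}{\widetilde A}}^2 \le C_0\lambda(t)^{-2}\to 0$. This controls the trace-free part, but gives \emph{no} two-sided bound on $\widetilde{\mathcal{H}}^2$ and hence no bound on the normalized curvature, injectivity radius, or diameter; without these there is no Cheeger--Gromov limit to speak of. In the paper this control is obtained through the chain Theorem~\ref{thm_nullmeancurvature1} $\Rightarrow$ Theorem~\ref{thm_gradientestiamte} $\Rightarrow$ Corollary~\ref{kor_gradientestimate} $\Rightarrow$ Lemma~\ref{lem_renormalizedflow}~(iii) (Bishop--Gromov and Klingenberg), and every link in that chain uses $\mathcal{H}^2>0$. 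Your replacement --- Codazzi gradient estimate plus unspecified Moser iteration --- is exactly the content of Hamilton's Harnack/entropy machinery (or of Chow's potential-function argument showing $\operatorname{R}>0$ in finite time), and sketching it as ``should produce two-sided bounds'' leaves the central difficulty unaddressed. Finally, handing off genus $\ge 1$ to Hamilton's original theorem means the proposal, as written, does not actually give an independent proof of the stated theorem even if steps (2)--(3) were completed.
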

	\begin{kor}\label{thm_singularities}
		Let $(\Sigma_0,\gamma_0)$ be a spacelike cross section of the past-pointing standard lightcone $\mathcal{N}$ in the $3+1$-Minkowski spacetime. Then the solution of null mean curvature flow starting from $\Sigma_0$ extinguishes in the tip of the cone in finite time and the renormalization by volume converges to a surface of constant spacetime mean curvature, which exactly arise as the image of a round sphere of a Lorentz transformation in $\operatorname{SO}^+(3,1)$ consisting of a Lorentz boost with boost vector
		\[
		\vec{v}=\begin{pmatrix}
		\sqrt{1+\norm{\vec{a}}}\\\vec{a}
		\end{pmatrix}
		\]
		for a vector $\vec{a}\in \R^3$ and a rotation determined by the choice of coordinates on $\Sbb^2$.
	\end{kor}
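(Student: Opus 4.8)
The plan is to read the corollary off from the equivalence between null mean curvature flow and $2d$-Ricci flow established in \Cref{sec_equialence}, together with the cited theorem of Hamilton and Chow, and to translate its conclusion into a statement about cross sections of $\mathcal{N}$ via the twice contracted Gauß equation \eqref{eq_gaußcurvature} and Remark~\ref{bem_codazzi}.

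First, any spacelike cross section of $\mathcal{N}$ is a graph $\Sigma_\omega=\{\omega=r\}$ over $\Sbb^2$, hence diffeomorphic to $\Sbb^2$, so $\Sigma_0$ has genus $0$ and $\chi(\Sigma_0)=2$. By \Cref{sec_equialence}, the solution of null mean curvature flow \eqref{eq_nullmeancurvatureflow} starting from $\Sigma_0$ is, written through its conformal factor, exactly the solution of $2d$-Ricci flow $\frac{\d}{\d t}\gamma=-2\Ric$ starting from $\gamma_0=\omega_0^2\d\Omega^2$. Along this flow $\frac{\d}{\d t}\d\mu=-\operatorname{R}\d\mu$, so Gauß--Bonnet gives $\frac{\d}{\d t}\operatorname{Area}(\Sigma_t)=-\int_{\Sigma_t}\operatorname{R}\d\mu=-8\pi$; hence the flow exists on a maximal interval $[0,T)$ with $T=\frac{1}{8\pi}\operatorname{Area}(\Sigma_0)<\infty$ and $\operatorname{Area}(\Sigma_t)\to 0$ as $t\nearrow T$.

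Next, by the theorem of Hamilton and Chow the normalized flow $\widetilde\gamma_\tau$ — obtained by rescaling to fixed area $4\pi$ and reparametrizing time — converges smoothly (possibly after composing with a family of M\"obius transformations of $\Sbb^2$) to a metric $\gamma_\infty=\omega_\infty^2\d\Omega^2$ of constant Gauß curvature. Writing $\gamma_t=c(t)\widetilde\gamma_{\tau(t)}$ with $c(t)=\frac{1}{4\pi}\operatorname{Area}(\Sigma_t)=2(T-t)\to 0$, we get $\omega_t=\sqrt{c(t)}\,\widetilde\omega_{\tau(t)}$; as the normalized conformal factor is uniformly bounded, $\omega_t\to 0$ uniformly on $\Sbb^2$. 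Since $\Sigma_t=\{r=\omega_t\}\subseteq\mathcal{N}$ and $\{r=0\}$ is the vertex of the cone, this shows the flow extinguishes in the tip of $\mathcal{N}$ in finite time, and the rescaled cross sections $\tfrac{1}{\sqrt{c(t)}}\,\Sigma_t$ converge to the cross section $\Sigma_{\omega_\infty}$.

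Finally, we identify the limit. By \eqref{eq_gaußcurvature} we have $\mathcal{H}^2=2\operatorname{R}$, so $\gamma_\infty$ has constant Gauß curvature if and only if $\Sigma_{\omega_\infty}$ has constant — and, by Gauß--Bonnet, strictly positive — spacetime mean curvature, i.e. is STCMC; equivalently $\accentset{\circ}{A}=0$ on $\Sigma_{\omega_\infty}$ by Proposition~\ref{prop_codazziminkowski2}. By Remark~\ref{bem_codazzi} (and Proposition~\ref{prop_codazziminkowski}), such $\omega_\infty$ are exactly the functions $\omega_\infty(\vec x)=c\bigl(\sqrt{1+\norm{\vec a}^2}+\vec a\cdot\vec x\bigr)^{-1}$ from \eqref{eq_metricconstantscalar} with $\vec a\in\R^3$, and these parametrize precisely the images of round spheres under the Lorentz transformations in $\operatorname{SO}^+(3,1)$ leaving $\mathcal{N}$ invariant, namely a boost with boost vector $(\sqrt{1+\norm{\vec a}^2},\vec a)$ composed with the rotation of $\Sbb^2$ encoded by the choice of coordinates; the boost parameter $\vec a$ is the one singled out by the normalized Ricci flow limit and plays the role of a conformal center of mass of $\gamma_0$. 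The only nontrivial external ingredient is the Hamilton--Chow convergence theorem; on our side, the one point requiring genuine care is passing from its convergence statement — a priori only modulo the conformal group of $\Sbb^2$ — to the uniform decay $\omega_t\to 0$ and to the convergence of the rescaled cross sections, which is where I expect the main, if modest, obstacle to lie.
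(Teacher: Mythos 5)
Your proposal is correct and takes the same route the paper intends: the paper states this corollary as an immediate consequence of the equivalence established in Section~4, the twice-contracted Gauß equation \eqref{eq_gaußcurvature}, Remark~\ref{bem_codazzi}, and the Hamilton--Chow convergence theorem, without spelling out a separate argument. The details you add --- the area decay $\frac{\d}{\d t}\operatorname{Area}(\Sigma_t)=-8\pi$ from Gauß--Bonnet giving $T=\operatorname{Area}(\Sigma_0)/8\pi$, the factorization $\omega_t=\sqrt{c(t)}\,\widetilde\omega_{\tau(t)}$ showing the cross sections collapse to the vertex, and the identification of the limit via $\mathcal{H}^2=2\operatorname{R}$ and equation~\eqref{eq_metricconstantscalar} --- are exactly the bookkeeping the corollary compresses.

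One clarification on the point you flag as a potential obstacle: there is none. The Hamilton--Chow theorem, in the form cited (\cite[Corollary 1.3]{chow1}), asserts that the \emph{normalized Ricci flow itself} converges smoothly to a constant-curvature metric --- not merely convergence up to the action of the M\"obius group, which is a feature of some uniformization-based proofs (\cite{andrewsbryan,bartzstruweye,struwe}) but not of the maximum-principle approach. Since the normalized flow is an honest parabolic PDE for the conformal factor $\widetilde\omega$ on the fixed $\Sbb^2$, smooth convergence of $\widetilde\gamma_{\widetilde{t}}$ is literally smooth convergence of $\widetilde\omega$, so the uniform bound on $\widetilde\omega$ and hence the decay $\omega_t\to 0$ are immediate; there is no residual conformal ambiguity to fix. (You also correctly read $\sqrt{1+\norm{\vec a}^2}$ from \eqref{eq_metricconstantscalar}, which silently repairs the typo $\sqrt{1+\norm{\vec a}}$ in the corollary's statement.)
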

	Conversely, we will show in the next section that by studying null mean curvature flow along the standard Minkowski lightcone a new proof for renormalized $2d$-Ricci flow arises.
	\section{A new proof of Hamilton's Theorem}\label{sec_hamilton}
	With this approach to $2d$-Ricci flow, we give a new proof of Hamilton's classical result:
	\begin{thm}[cf. \cite{hamilton1}]\label{thm_mainthm}
		Let $(\Sigma_0,\gamma_0)$ be a surface with conformally round metric $\gamma_0$ and strictly positive scalar curvature. Then a solution of renormalized Ricci flow exists for all time and the metrics $\gamma_t$ converge to a smooth metric $\gamma_\infty$ of constant scalar curvature in $C^k$ for all $k\in\N$ as $t\to\infty$.
	\end{thm}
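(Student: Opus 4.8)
The plan is to run the entire argument on the extrinsic side, and for the volume--normalized flow. Write the renormalized Ricci flow in the conformal class as the deformation $\frac{\d}{\d t}x=\varphi\ul{L}$ along $\mathcal{N}$ with $\ul{\theta}\varphi=\tfrac12\big(\overline{\mathcal{H}^2}-\mathcal{H}^2\big)$, where $\overline{\mathcal{H}^2}\definedas\big(\int_\Sigma\mathcal{H}^2\,\d\mu\big)\big/\big(\int_\Sigma\d\mu\big)$; by the Gau\ss{} equation \eqref{eq_gaußcurvature} this is precisely renormalized Ricci flow, while by Gau\ss--Bonnet the area $\int_\Sigma\d\mu$ is preserved and $\int_\Sigma\mathcal{H}^2\,\d\mu=16\pi$, so $\overline{\mathcal{H}^2}$ is a positive \emph{constant}. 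First I would record the evolution equations: since $\Hess\overline{\mathcal{H}^2}=0$, Lemmas \ref{lem_propagation} and \ref{lem_Apropagation} give
\[
\frac{\d}{\d t}A_{ij}=\Hess_{ij}\mathcal{H}^2,\qquad \frac{\d}{\d t}\gamma^{ij}=\tfrac12\big(\mathcal{H}^2-\overline{\mathcal{H}^2}\big)\gamma^{ij},\qquad \frac{\d}{\d t}\mathcal{H}^2=\Delta\mathcal{H}^2+\tfrac12\mathcal{H}^2\big(\mathcal{H}^2-\overline{\mathcal{H}^2}\big).
\]
Short--time existence follows from strict parabolicity of the resulting scalar equation for $\omega$, and the maximum principle applied to the last equation shows that $\mathcal{H}^2>0$, equivalently $\operatorname{R}>0$ by \eqref{eq_gaußcurvature}, is preserved.

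The core of the proof is a pinching estimate for the trace--free part $\accentset{\circ}{A}$ of the gauge--fixed second fundamental form $A=\ul{\theta}\chi$. Combining $\frac{\d}{\d t}A_{ij}=\Hess_{ij}\mathcal{H}^2$ and the evolution of $\gamma^{ij}$ with the Null Simons' Identity (Lemma \ref{lem_nullsimon}), the identity $\Delta|A|^2=2\langle A,\Delta A\rangle+2|\nabla A|^2$, and the splitting of $\nabla A$ into trace and trace--free parts (so that $|\nabla A|^2=|\nabla\accentset{\circ}{A}|^2+\tfrac12|\nabla\mathcal{H}^2|^2$ in $\dim\Sigma=2$), a direct computation should produce
\[
\frac{\d}{\d t}|\accentset{\circ}{A}|^2=\Delta|\accentset{\circ}{A}|^2-2|\nabla\accentset{\circ}{A}|^2-\overline{\mathcal{H}^2}\,|\accentset{\circ}{A}|^2.
\]
Since $\overline{\mathcal{H}^2}$ is a positive constant, the maximum principle (dropping the favourable gradient term) immediately gives the exponential decay
\[
\max_\Sigma|\accentset{\circ}{A}|^2(t)\le e^{-\overline{\mathcal{H}^2}\,t}\,\max_\Sigma|\accentset{\circ}{A}|^2(0).
\]
By Proposition \ref{prop_codazziminkowski2}, $\accentset{\circ}{A}=0$ characterises constant positive scalar curvature, so this decay is the quantitative form of the theorem; in particular $|\accentset{\circ}{A}|^2\le C_0$ uniformly, which pinches the eigenvalues $\lambda_1,\lambda_2$ of $A$ via $(\lambda_1-\lambda_2)^2=2|\accentset{\circ}{A}|^2$.

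It remains to promote this to long--time existence. Let $[0,T)$ be the maximal interval of existence; since the reaction term in the $\mathcal{H}^2$--equation is superlinear, showing $T=\infty$ is the \emph{main technical obstacle}. The plan here is to exploit the pinching together with the identity $\dive\accentset{\circ}{A}=\tfrac12\nabla\mathcal{H}^2$ (from Proposition \ref{prop_codazziminkowski2}, i.e.\ total symmetry of $\nabla A$ and $\tr_\gamma A=\mathcal{H}^2$) and the gradient estimate $|\nabla\mathcal{H}^2|^2\le 4|\nabla\accentset{\circ}{A}|^2$: combined with the conservation of the area and of $\int_\Sigma\mathcal{H}^2\,\d\mu$, these control the oscillation of $\mathcal{H}^2$ and, fed back into the $\mathcal{H}^2$--equation, should yield a time--independent bound on $\sup_\Sigma\mathcal{H}^2$, hence $T=\infty$. (Equivalently, one may argue via the unnormalized null mean curvature flow \eqref{eq_nullmeancurvatureflow}, for which $\frac{\d}{\d t}\int_\Sigma\d\mu=-\tfrac12\int_\Sigma\mathcal{H}^2\,\d\mu=-8\pi$, so that its area vanishes at $t=\big(\int_{\Sigma_0}\d\mu\big)/8\pi$, cf.\ Corollary \ref{thm_singularities}.)

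Finally, granted the uniform curvature bound and the exponential decay of $|\accentset{\circ}{A}|$, convergence is standard. Interior (Shi--type) estimates along the flow give uniform bounds on all covariant derivatives of the curvature, hence on $\nabla^m\accentset{\circ}{A}$ and $\nabla^m\mathcal{H}^2$; interpolation then upgrades the $C^0$--decay of $\accentset{\circ}{A}$ to exponential decay of $\|\accentset{\circ}{A}\|_{C^k}$ for every $k$. Since $\dive\accentset{\circ}{A}=\tfrac12\nabla\mathcal{H}^2$, this forces $\mathcal{H}^2$ to converge exponentially in $C^k$ to the constant $\overline{\mathcal{H}^2}$, and $\int_0^\infty\big\|\tfrac{\d}{\d t}\gamma_t\big\|_{C^k}\,\d t<\infty$; thus the metrics $\gamma_t$ converge in $C^k$ for every $k$ to a smooth limit $\gamma_\infty$ with $\operatorname{R}_{\gamma_\infty}\equiv\tfrac12\overline{\mathcal{H}^2}$ constant by \eqref{eq_gaußcurvature}. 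By Remark \ref{bem_codazzi} and Proposition \ref{prop_codazziminkowski}, $\gamma_\infty$ is one of the constant--curvature metrics \eqref{eq_metricconstantscalar}, completing the proof.
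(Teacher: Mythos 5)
Your derivation of
\[
\frac{\d}{\d t}\newbtr{\accentset{\circ}{A}}^2=\Delta\newbtr{\accentset{\circ}{A}}^2-2\newbtr{\nabla\accentset{\circ}{A}}^2-\overline{\mathcal{H}^2}\,\newbtr{\accentset{\circ}{A}}^2
\]
along the area-normalized flow is correct — I checked it line by line using $\ul{\theta}\varphi=\tfrac12(\overline{\mathcal{H}^2}-\mathcal{H}^2)$, the Null Simons' identity, and the orthogonal splitting $\newbtr{\nabla A}^2=\newbtr{\nabla\accentset{\circ}{A}}^2+\tfrac12\newbtr{\nabla\mathcal{H}^2}^2$ — and it is a genuinely different organizing idea from the paper. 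The paper instead works on the \emph{unnormalized} flow and must prove the scale-broken pinching $\newbtr{\accentset{\circ}{A}}^2/(\mathcal{H}^2)^2\le C(\mathcal{H}^2)^{-\sigma}$ (Theorem \ref{thm_nullmeancurvature1}) via the $\sigma$-trick precisely because $\newbtr{\accentset{\circ}{A}}^2$ is not scale-invariant; your observation that the scale breaking becomes automatic once one normalizes the area, making $\overline{\mathcal{H}^2}$ a fixed positive constant, is clean and appealing.

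That said, there is a genuine gap in the long-time existence step which your sketch does not close. Your decay estimate holds only \emph{for as long as the solution exists}; it does not by itself bound $\mathcal{H}^2$ from above, and the reaction term $\tfrac12\mathcal{H}^2(\mathcal{H}^2-\overline{\mathcal{H}^2})$ is superquadratic in $\mathcal{H}^2$ and can a priori drive finite-time blow-up on any compact subinterval. Your proposed remedy — combining $\dive\accentset{\circ}{A}=\tfrac12\nabla\mathcal{H}^2$ with the conservation laws to control the oscillation of $\mathcal{H}^2$ — does not run as written: $\dive$ loses a derivative, so smallness of $\newbtr{\accentset{\circ}{A}}$ controls $\nabla\mathcal{H}^2$ only via an elliptic estimate whose constants depend on the geometry (curvature bounds, injectivity radius), i.e.\ on exactly what you are trying to bound; the step is circular. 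Likewise, the pointwise estimate $\newbtr{\nabla\mathcal{H}^2}^2\le 4\newbtr{\nabla\accentset{\circ}{A}}^2$ transfers the problem to $\newbtr{\nabla\accentset{\circ}{A}}$, which your $C^0$ decay does not control, and the ``equivalently, argue via the unnormalized flow'' alternative does not by itself prove that the unnormalized flow survives up to the extinction time $T=\operatorname{Vol}(\Sigma_0)/8\pi$ rather than blowing up earlier. The paper closes this loop by a different route: it first establishes the gradient estimate $\newbtr{\nabla\mathcal{H}^2}\le\eta(\mathcal{H}^2)^{3/2}+C_\eta$ on the unnormalized flow (Theorem \ref{thm_gradientestiamte}, which needs the scale-broken pinching), deduces $\mathcal{H}^2_{\max}/\mathcal{H}^2_{\min}\to 1$ and $\diam\Sigma_t\to 0$ via Bonnet--Myers (Corollary \ref{kor_gradientestimate}), and only then passes to the renormalized flow, where Bishop--Gromov and Klingenberg's injectivity radius estimate give a uniform two-sided bound on $\widetilde{\mathcal{H}}^2$ (Lemma \ref{lem_renormalizedflow}(iii)); long-time existence of the renormalized flow then follows from $\operatorname{R}\ge\tfrac1{2(T-t)}$. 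Once that uniform curvature bound is in hand your Shi-type / interpolation argument for $C^k$-convergence is fine, and your clean decay equation for $\newbtr{\accentset{\circ}{A}}^2$ would slot in nicely — the paper uses the analogous quantity $\Psi=\newbtr{\nabla\mathcal{H}^2}^2/\mathcal{H}^2+K\newbtr{\accentset{\circ}{A}}^2$ at exactly that stage — but as it stands the curvature bound is the missing load-bearing step.
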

	Note that the assumption of strictly positive scalar curvature translates by the Gauß equation \eqref{eq_gaußcurvature} to the assumption that the mean curvature vector $\vec{\mathcal{H}}$ is everywhere spacelike. Throughout this section, we will use the extrinsic objects $A$, $\mathcal{H}^2$ evolving under null mean curvature flow on the standard lightcone in the $3+1$-Minkowski spacetime, but will frequently exploit its equivalence to $2d$-Ricci flow to switch freely between the frameworks of null mean curvature flow and $2d$-Ricci flow. A key tool in the proof will be to first study the evolution of $\newbtr{\accentset{\circ}{A}}^2$ along the unnormalized flow which can be combined into the evolution of $\mathcal{H}^2$ to yield a crucial gradient estimate. Translating these to the renormalized flow will then yield the proof of Theorem \ref{thm_mainthm}.
	\begin{bem}
		Note that there does not seem to be a direct connection between $\accentset{\circ}{A}$ and the auxiliary term $M=\accentset{\circ}{\Hess}f$ in the modified renormalized flow in \cite{bartzstruweye, hamilton1}, where $f$ solves
		\[
			\Delta f=\left(\operatorname{R}-\fint_\Sigma \operatorname{R}\right).
		\]
		To see this, consider any stationary point of the renormalized flow where $f$ is necessarily constant while $\accentset{\circ}{A}=0$ holds for all functions $\omega$ of the form \eqref{eq_metricconstantscalar} arising from Lorentz transformations.
	\end{bem}
	We start by computing the relevant evolution equations for the unnormalized flow.
	\begin{prop}\label{prop_nullmeancurvature1} For a smooth solution to null mean curvature flow, we find
		\begin{align*}
		\frac{\d}{\d t}\btr{A}^2&=\Delta\btr{A}^2-2\btr{\nabla A}^2+\frac{1}{2}\left(\mathcal{H}^2\right)^3,\\
		\frac{\d}{\d t}\mathcal{H}^2&=\Delta\mathcal{H}^2+\frac{1}{2}\left(\mathcal{H}^2\right)^2.
		\end{align*}
	\end{prop}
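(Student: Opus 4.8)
The plan is to reduce the proposition to Lemma~\ref{lem_Apropagation}, the Null Simons' identity (Lemma~\ref{lem_nullsimon}), and one structural observation. Along null mean curvature flow the deformation is $\frac{\d}{\d t}=\varphi\,\ul{L}$ with $\varphi=-\frac12\theta$, and since $\ul{L}(r)=1$ this is exactly \eqref{eq_nullmeancurvatureflow}. The key simplification is
\[
\ul{\theta}\varphi=\frac{2}{\omega}\cdot\left(-\frac12\theta\right)=-\frac{\theta}{\omega}=-2\mathcal{K}=-\operatorname{R}=-\frac12\mathcal{H}^2,
\]
using $\theta=2\omega\mathcal{K}$ and the twice contracted Gauß equation \eqref{eq_gaußcurvature}. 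Substituting $\ul{\theta}\varphi=-\frac12\mathcal{H}^2$ into Lemma~\ref{lem_Apropagation} immediately gives $\frac{\d}{\d t}A_{ij}=\Hess_{ij}\mathcal{H}^2$ as well as $\frac{\d}{\d t}\mathcal{H}^2=\Delta\mathcal{H}^2+\frac12(\mathcal{H}^2)^2$, which is already the second asserted identity.

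For $\btr{A}^2$ I would differentiate $\btr{A}^2=\gamma^{ik}\gamma^{jl}A_{ij}A_{kl}$. By Lemma~\ref{lem_propagation}(i) together with $\ul{\chi}=\frac12\ul{\theta}\gamma$ one has $\frac{\d}{\d t}\gamma^{ij}=-\ul{\theta}\varphi\,\gamma^{ij}=\frac12\mathcal{H}^2\gamma^{ij}$, so combining this with $\frac{\d}{\d t}A_{ij}=\Hess_{ij}\mathcal{H}^2$ yields
\[
\frac{\d}{\d t}\btr{A}^2=\mathcal{H}^2\btr{A}^2+2\spann{\Hess\mathcal{H}^2,A}.
\]
Next I would insert the Null Simons' identity in the form $\Hess_{ij}\mathcal{H}^2=\Delta A_{ij}-\frac12\mathcal{H}^2\accentset{\circ}{A}_{ij}$ and the Bochner-type identity $\spann{\Delta A,A}=\frac12\Delta\btr{A}^2-\btr{\nabla A}^2$. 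Using the orthogonal decomposition $A=\accentset{\circ}{A}+\frac12\mathcal{H}^2\gamma$, which gives $\spann{\accentset{\circ}{A},A}=\btr{\accentset{\circ}{A}}^2$ and $\btr{A}^2-\btr{\accentset{\circ}{A}}^2=\frac12(\mathcal{H}^2)^2$, the terms $\mathcal{H}^2\btr{A}^2$ and $-\mathcal{H}^2\btr{\accentset{\circ}{A}}^2$ merge into $\frac12(\mathcal{H}^2)^3$, and the first claimed identity follows.

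I do not expect a genuine obstacle here: this is the tensorial bookkeeping familiar from the reaction-diffusion structure of Ricci and mean curvature flow, and both nontrivial ingredients---the Null Simons' identity and the relation $\ul{\theta}\varphi=-\frac12\mathcal{H}^2$---are already available. The only point requiring care is tracking the conformal weight $\frac12\mathcal{H}^2$, which enters both through $\frac{\d}{\d t}\gamma^{ij}$ and as the trace part of $A$; its cancellation against the $\accentset{\circ}{A}$-contribution of Simons' identity is exactly what produces the clean cubic term $\frac12(\mathcal{H}^2)^3$.
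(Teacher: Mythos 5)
Your proposal is correct and follows essentially the same route as the paper: derive the reaction-diffusion form of $\frac{\d}{\d t}A_{ij}$ by combining Lemma~\ref{lem_Apropagation} with the Null Simons' identity, then compute $\frac{\d}{\d t}\btr{A}^2$ via the Bochner identity and the trace decomposition of $A$. The only cosmetic difference is that you isolate the identity $\ul{\theta}\varphi=-\tfrac12\mathcal{H}^2$ explicitly before substituting, whereas the paper leaves this computation implicit in its appeal to Lemma~\ref{lem_Apropagation}.
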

	\begin{proof}
		For $\varphi=-\frac{1}{2}\theta$, the evolution equation for $\mathcal{H}^2$ is immediate form Lemma \ref{lem_Apropagation}. Combining the evolution equation for A from Lemma \ref{lem_Apropagation} with the null Simons' identity, Lemma \ref{lem_nullsimon}, we have
		\[
		\frac{\d}{\d t}A_{ij}=\Delta A_{ij}-\frac{1}{2}\mathcal{H}^2\accentset{\circ}{A}_{ij}.
		\]
		Hence
		\begin{align*}
		\frac{\d}{\d t}\btr{A}^2
		&=\frac{\d}{\d t}\left(\gamma^{ik}\gamma^{jl}A_{ij}A_{jk}\right)\\
		&=2\gamma^{ik}\gamma^{jl}A_{ij}\frac{\d}{\d t}A_{kl}+2\frac{\d}{\d t}\gamma^{ik}\gamma^{jl}A_{ij}A_{kl}\\
		&=2\spann{A,\Delta A-\frac{1}{2}\mathcal{H}^2\accentset{\circ}{A}}+\mathcal{H}^2\btr{A}^2\\
		&=\Delta\btr{A}^2-2\btr{\nabla A}^2-\mathcal{H}^2\newbtr{\accentset{\circ}{A}}^2+\mathcal{H}^2\btr{A}^2\\
		&=\Delta\btr{A}^2-2\btr{\nabla A}^2+\frac{1}{2}\left(\mathcal{H}^2\right)^3.
		\end{align*}
	\end{proof}
	Therefore, as we already know from Ricci flow, cf. \cite[Corollary 2.11]{chowluni}, the positivity of $\mathcal{H}^2$ is preserved under the flow by the parabolic maximum principle \cite[Proposition 2.9]{chowluni}. In particular, the flow exists only for finite time, as $\mathcal{H}^2\to\infty$ in finite time. 
	\begin{prop}\label{prop_nullmeancurvature2}
		\begin{align*}
		\frac{\d}{\d t}\frac{\btr{A}^2}{\left(\mathcal{H}^2\right)^2}
		&=\Delta\left(\frac{\btr{A}^2}{\left(\mathcal{H}^2\right)^2}\right)+\frac{2}{\mathcal{H}^2}\spann{\nabla \mathcal{H}^2,\nabla\left(\frac{\btr{A}^2}{\left(\mathcal{H}^2\right)^2}\right)}-\mathcal{H}^2\frac{\newbtr{\accentset{\circ}{A}}^2}{\left(\mathcal{H}^2\right)^2}-2\btr{\nabla\frac{A}{\mathcal{H}^2}}^2.
		\end{align*}
	\end{prop}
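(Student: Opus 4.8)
The plan is to establish the identity by a direct computation that combines the two evolution equations of Proposition~\ref{prop_nullmeancurvature1} with the quotient rule. Abbreviating $\phi\definedas\frac{\btr{A}^2}{(\mathcal{H}^2)^2}$, the product rule on scalars gives
\[
	\frac{\d}{\d t}\phi=\frac{1}{(\mathcal{H}^2)^2}\frac{\d}{\d t}\btr{A}^2-\frac{2\btr{A}^2}{(\mathcal{H}^2)^3}\frac{\d}{\d t}\mathcal{H}^2 ,
\]
and inserting Proposition~\ref{prop_nullmeancurvature1} produces the two second-order terms $\frac{\Delta\btr{A}^2}{(\mathcal{H}^2)^2}-\frac{2\btr{A}^2\Delta\mathcal{H}^2}{(\mathcal{H}^2)^3}$ together with the lower-order terms $-2\frac{\btr{\nabla A}^2}{(\mathcal{H}^2)^2}+\frac12\mathcal{H}^2-\frac{\btr{A}^2}{\mathcal{H}^2}$. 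To turn the second-order part into $\Delta\phi$ I would use the elementary Bochner-type identity for a quotient,
\[
	\Delta\phi=\frac{\Delta\btr{A}^2}{(\mathcal{H}^2)^2}-\frac{2\btr{A}^2\Delta\mathcal{H}^2}{(\mathcal{H}^2)^3}-\frac{4}{(\mathcal{H}^2)^3}\spann{\nabla\btr{A}^2,\nabla\mathcal{H}^2}+\frac{6\btr{A}^2}{(\mathcal{H}^2)^4}\btr{\nabla\mathcal{H}^2}^2 ,
\]
which is immediate from $\nabla\phi=\frac{\nabla\btr{A}^2}{(\mathcal{H}^2)^2}-\frac{2\btr{A}^2\nabla\mathcal{H}^2}{(\mathcal{H}^2)^3}$ together with $\Delta(\mathcal{H}^2)^2=2\mathcal{H}^2\Delta\mathcal{H}^2+2\btr{\nabla\mathcal{H}^2}^2$.

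After this substitution, $\frac{\d}{\d t}\phi$ equals $\Delta\phi$ plus the gradient terms $\frac{4}{(\mathcal{H}^2)^3}\spann{\nabla\btr{A}^2,\nabla\mathcal{H}^2}-\frac{6\btr{A}^2}{(\mathcal{H}^2)^4}\btr{\nabla\mathcal{H}^2}^2$, plus $-2\frac{\btr{\nabla A}^2}{(\mathcal{H}^2)^2}$, plus the reaction terms $\frac12\mathcal{H}^2-\frac{\btr{A}^2}{\mathcal{H}^2}$. Next I would isolate the drift term: expanding it gives $\frac{2}{\mathcal{H}^2}\spann{\nabla\mathcal{H}^2,\nabla\phi}=\frac{2}{(\mathcal{H}^2)^3}\spann{\nabla\btr{A}^2,\nabla\mathcal{H}^2}-\frac{4\btr{A}^2}{(\mathcal{H}^2)^4}\btr{\nabla\mathcal{H}^2}^2$, so the gradient terms above are exactly this drift term plus the remainder $\frac{2}{(\mathcal{H}^2)^3}\spann{\nabla\btr{A}^2,\nabla\mathcal{H}^2}-\frac{2\btr{A}^2}{(\mathcal{H}^2)^4}\btr{\nabla\mathcal{H}^2}^2$. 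Using $\nabla_k\btr{A}^2=2\spann{\nabla_kA,A}$ one then expands
\[
	\btr{\nabla\frac{A}{\mathcal{H}^2}}^2=\frac{\btr{\nabla A}^2}{(\mathcal{H}^2)^2}-\frac{\spann{\nabla\btr{A}^2,\nabla\mathcal{H}^2}}{(\mathcal{H}^2)^3}+\frac{\btr{A}^2\btr{\nabla\mathcal{H}^2}^2}{(\mathcal{H}^2)^4} ,
\]
and checks that $-2\frac{\btr{\nabla A}^2}{(\mathcal{H}^2)^2}$ together with the above remainder is precisely $-2\btr{\nabla\frac{A}{\mathcal{H}^2}}^2$.

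Finally, for the reaction terms $\frac12\mathcal{H}^2-\frac{\btr{A}^2}{\mathcal{H}^2}$ I would invoke the pointwise identity $\btr{A}^2=\newbtr{\accentset{\circ}{A}}^2+\frac12(\mathcal{H}^2)^2$, valid since $\Sigma$ is two-dimensional and $\tr_\gamma A=\mathcal{H}^2$ (this is the same identity already used in the proof of Proposition~\ref{prop_nullmeancurvature1}); it converts $\frac12\mathcal{H}^2-\frac{\btr{A}^2}{\mathcal{H}^2}$ into $-\mathcal{H}^2\frac{\newbtr{\accentset{\circ}{A}}^2}{(\mathcal{H}^2)^2}$, which finishes the proof. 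The computation is routine rather than conceptually delicate; the one step requiring genuine care is the bookkeeping of the three separate first-order contributions — one from $\Delta\phi$, one from $\spann{\nabla\btr{A}^2,\nabla\mathcal{H}^2}$, and one from $\btr{\nabla\mathcal{H}^2}^2$ — and the verification that they recombine into the drift term plus $-2\btr{\nabla\frac{A}{\mathcal{H}^2}}^2$ with nothing left over. Tracking the numerical coefficients correctly (in particular the $6$ produced when $\Delta(\mathcal{H}^2)^2$ is differentiated once more in forming $\Delta\phi$) is the main place an error could creep in, but there is no real obstacle.
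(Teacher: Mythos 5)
Your proposal is correct and follows essentially the same route as the paper's proof: quotient rule for $\frac{\d}{\d t}\phi$ from Proposition~\ref{prop_nullmeancurvature1}, the quotient Laplacian identity, recombination of the leftover first-order terms via the drift term and $\bigl|\nabla\frac{A}{\mathcal{H}^2}\bigr|^2$, and the identity $\btr{A}^2=\newbtr{\accentset{\circ}{A}}^2+\tfrac{1}{2}(\mathcal{H}^2)^2$ for the reaction terms. The only differences are cosmetic (the paper converts the reaction term earlier; your side-remark about $\Delta(\mathcal{H}^2)^2$ is a slightly loose way of motivating the correct $\Delta\phi$ formula), and all coefficients and signs check out.
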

	\begin{bem}\label{bem_blowup}
		In particular, any upper bound on $\frac{\btr{A}^2}{\left(\mathcal{H}^2\right)^2}$ is preserved under the flow, so for a sequence $(p,t)$ of points and times, we have that $\btr{A}^2\to\infty$ if and only if $\mathcal{H}^2\to\infty$.
	\end{bem}
	\begin{proof}
		By Proposition \ref{prop_nullmeancurvature1}, we have that
		\begin{align*}
		\frac{\d}{\d t}\frac{\btr{A}^2}{\left(\mathcal{H}^2\right)^2}
		&=\frac{\frac{\d}{\d t}\btr{A}^2}{\left(\mathcal{H}^2\right)^2}-\frac{2}{\left(\mathcal{H}^2\right)^3}\btr{A}^2\frac{\d}{\d t}\mathcal{H}^2\\
		&=\frac{\Delta\btr{A}^2}{\left(\mathcal{H}^2\right)^2}-\frac{2\btr{\nabla A}^2}{\left(\mathcal{H}^2\right)^2}+\frac{1}{2}\mathcal{H}^2-\frac{2}{\left(\mathcal{H}^2\right)^3}\btr{A}^2\Delta\mathcal{H}^2-\frac{\btr{A}^2}{\left(\mathcal{H}^2\right)^2}\mathcal{H}^2\\
		&=\frac{\Delta\btr{A}^2}{\left(\mathcal{H}^2\right)^2}-\frac{2\btr{\nabla A}^2}{\left(\mathcal{H}^2\right)^2}-\frac{2}{\left(\mathcal{H}^2\right)^3}\btr{A}^2\Delta\mathcal{H}^2-\mathcal{H}^2\frac{\newbtr{\accentset{\circ}{A}}^2}{\left(\mathcal{H}^2\right)^2}.
		\end{align*}
		Note that
		\begin{align*}
		\Delta\left(\frac{\btr{A}^2}{\left(\mathcal{H}^2\right)^2}\right)
		&=\frac{\Delta \btr{A}^2}{\left(\mathcal{H}^2\right)^2}-\frac{4}{\left(\mathcal{H}^2\right)^3}\spann{\nabla\btr{A}^2,\nabla \mathcal{H}^2}+\frac{6}{\left(\mathcal{H}^2\right)^4}\btr{A}^2\btr{\nabla \mathcal{H}^2}^2-\frac{2}{\left(\mathcal{H}^2\right)^3}\btr{A}^2\Delta\mathcal{H}^2.
		\end{align*}
		Thus
		\begin{align*}
		\frac{\d}{\d t}\frac{\btr{A}^2}{\left(\mathcal{H}^2\right)^2}
		&=\Delta\left(\frac{\btr{A}^2}{\left(\mathcal{H}^2\right)^2}\right)+\frac{4}{\left(\mathcal{H}^2\right)^3}\spann{\nabla\btr{A}^2,\nabla \mathcal{H}^2}-\frac{6}{\left(\mathcal{H}^2\right)^4}\btr{A}^2\btr{\nabla \mathcal{H}^2}^2-\frac{2\btr{\nabla A}^2}{\left(\mathcal{H}^2\right)^2}-\mathcal{H}^2\frac{\newbtr{\accentset{\circ}{A}}^2}{\left(\mathcal{H}^2\right)^2}.
		\end{align*}
		Moreover, 
		\begin{align*}
		\spann{\nabla\left(\frac{\btr{A}^2}{\left(\mathcal{H}^2\right)^2}\right),\nabla\mathcal{H}^2}=\frac{1}{\left(\mathcal{H}^2\right)^2}\spann{\nabla\btr{A}^2,\nabla\mathcal{H}^2}-\frac{2}{\left(\mathcal{H}^2\right)^3}\btr{A}^2\btr{\nabla\mathcal{H}^2}^2.
		\end{align*}
		Therefore, we conclude that
		\begin{align*}
		\frac{\d}{\d t}\frac{\btr{A}^2}{\left(\mathcal{H}^2\right)^2}
		=&\,\Delta\left(\frac{\btr{A}^2}{\left(\mathcal{H}^2\right)^2}\right)+\frac{2}{\mathcal{H}^2}\spann{\nabla\left(\frac{\btr{A}^2}{\left(\mathcal{H}^2\right)^2}\right),\nabla\mathcal{H}^2} -\mathcal{H}^2\frac{\newbtr{\accentset{\circ}{A}}^2}{\left(\mathcal{H}^2\right)^2}\\
		&-\frac{2}{\left(\mathcal{H}^2\right)^4}\btr{A}^2\btr{\nabla \mathcal{H}^2}^2-\frac{2\btr{\nabla A}^2}{\left(\mathcal{H}^2\right)^2}+\frac{2}{\left(\mathcal{H}^2\right)^3}\spann{\nabla\btr{A}^2,\nabla\mathcal{H}^2}.
		\end{align*}
		Now notice that
		\begin{align*}
		\nabla_i\frac{A_{jk}}{\mathcal{H}^2}&=\frac{1}{\mathcal{H}^2}\nabla_iA_{jk}-\frac{1}{\left(\mathcal{H}^2\right)^2}A_{jk}\nabla_i\mathcal{H}^2\\
		&=\frac{1}{\left(\mathcal{H}^2\right)^2}(\mathcal{H}^2\nabla_iA_{jk}-A_{jk}\nabla_i\mathcal{H}^2),
		\end{align*}
		so that
		\begin{align*}
		\btr{\nabla\frac{A}{\mathcal{H}^2}}
		&=\frac{1}{\left(\mathcal{H}^2\right)^4}\left(\left(\mathcal{H}^2\right)^2\btr{\nabla A}^2+\btr{A}^2\btr{\nabla \mathcal{H}^2}^2-2\mathcal{H}^2A^{jk}\nabla_iA_{jk}\nabla_i\mathcal{H}^2\right)\\
		&=\frac{1}{\left(\mathcal{H}^2\right)^4}\left(\left(\mathcal{H}^2\right)^2\btr{\nabla A}^2+\btr{A}^2\btr{\nabla \mathcal{H}^2}^2-\mathcal{H}^2\spann{\nabla\btr{A}^2,\nabla\mathcal{H}^2}\right).
		\end{align*}
		The claim follows from using the last identity in the evolution equation.
	\end{proof}
	We would like to point out the similarity to the evolution of the corresponding quantities for mean curvature flow in Euclidean space, where the second fundamental form $h$ and mean curvature $H$ are replaced here by $A$ and $\mathcal{H}^2$ in the evolution equations. However, the presence of the additional good term $-\mathcal{H}^2\frac{\newbtr{\accentset{\circ}{A}}^2}{\left(\mathcal{H}^2\right)^2}$ will allow us to prove a scale breaking estimate similar to work of Huisken (cf. \cite[Theorem 8.6]{andrewschowguentherlangford}) for mean curvature flow without any pinching condition and without the need to employ a Stampacchia iteration.
	\begin{thm}\label{thm_nullmeancurvature1}
		Let $\{\Sigma_t\}_{t\in[0,T_{max})}$ be a family of closed, topological $2$-spheres with strictly positive scalar curvature evolving under Ricci flow in the form of null mean curvature flow. Then, for any $\sigma\in[0,1]$, there exists $C=C(\Sigma_0,\sigma)$, such that
		\[
		\frac{\newbtr{\accentset{\circ}{A}}^2}{\left(\mathcal{H}^2\right)^2}\le C(\mathcal{H}^2)^{-\sigma}
		\]
		for all $0\le t<T_{max}$.
	\end{thm}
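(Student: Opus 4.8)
The plan is to run a direct maximum‑principle argument on the scale‑broken quantity
\begin{align*}
f_\sigma\definedas\frac{\newbtr{\accentset{\circ}{A}}^2}{\left(\mathcal{H}^2\right)^{2-\sigma}},
\end{align*}
exploiting the extra good term already present in Proposition \ref{prop_nullmeancurvature2} — precisely the term that is missing for mean curvature flow in Euclidean space and that there forces a Stampacchia iteration. First I would record the purely algebraic fact that on a $2$-surface $\btr{A}^2$ and $\newbtr{\accentset{\circ}{A}}^2$ differ by a constant multiple of $\left(\mathcal{H}^2\right)^2$, namely $\btr{A}^2=\newbtr{\accentset{\circ}{A}}^2+\frac12\left(\mathcal{H}^2\right)^2$ because $\tr_\gamma A=\mathcal{H}^2$; hence $g\definedas\newbtr{\accentset{\circ}{A}}^2/\left(\mathcal{H}^2\right)^2$ and $\btr{A}^2/\left(\mathcal{H}^2\right)^2$ differ by an additive constant, and Proposition \ref{prop_nullmeancurvature2} reads verbatim as the evolution equation
\begin{align*}
\frac{\d}{\d t}g=\Delta g+\frac{2}{\mathcal{H}^2}\spann{\nabla\mathcal{H}^2,\nabla g}-\mathcal{H}^2 g-2\btr{\nabla\frac{A}{\mathcal{H}^2}}^2.
\end{align*}
Since the positivity of $\mathcal{H}^2$ is preserved along the flow (as noted after Proposition \ref{prop_nullmeancurvature1}), $f_\sigma=g\left(\mathcal{H}^2\right)^\sigma$ is a smooth function on $\Sigma_t\times[0,T_{max})$ and all of the following manipulations are legitimate.

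Next I would compute $\frac{\d}{\d t}f_\sigma$ from the equation for $g$ together with $\frac{\d}{\d t}\mathcal{H}^2=\Delta\mathcal{H}^2+\frac12\left(\mathcal{H}^2\right)^2$ from Proposition \ref{prop_nullmeancurvature1}, expanding $\Delta f_\sigma=\Delta\left(g\left(\mathcal{H}^2\right)^\sigma\right)$ by the product rule and re‑expressing the first‑order terms through $\nabla g=\left(\mathcal{H}^2\right)^{-\sigma}\nabla f_\sigma-\sigma f_\sigma\left(\mathcal{H}^2\right)^{-\sigma-1}\nabla\mathcal{H}^2$. The contributions containing $\Delta\mathcal{H}^2$ cancel, and the result collapses to
\begin{align*}
\frac{\d}{\d t}f_\sigma=&\,\Delta f_\sigma+\frac{2(1-\sigma)}{\mathcal{H}^2}\spann{\nabla\mathcal{H}^2,\nabla f_\sigma}\\
&-\sigma(1-\sigma)\frac{\newbtr{\accentset{\circ}{A}}^2}{\left(\mathcal{H}^2\right)^{4-\sigma}}\btr{\nabla\mathcal{H}^2}^2-\frac{2-\sigma}{2}\mathcal{H}^2 f_\sigma-2\left(\mathcal{H}^2\right)^\sigma\btr{\nabla\frac{A}{\mathcal{H}^2}}^2.
\end{align*}

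For $\sigma\in[0,1]$ each of the last three terms on the right is nonpositive, since $\mathcal{H}^2>0$, $f_\sigma\ge0$ and $\sigma(1-\sigma)\ge0$. Discarding them yields the linear parabolic inequality $\frac{\d}{\d t}f_\sigma\le\Delta f_\sigma+\frac{2(1-\sigma)}{\mathcal{H}^2}\spann{\nabla\mathcal{H}^2,\nabla f_\sigma}$, whose drift $\frac{2(1-\sigma)}{\mathcal{H}^2}\nabla\mathcal{H}^2$ is smooth, hence locally bounded in spacetime. The scalar parabolic maximum principle on the closed surfaces $\Sigma_t$ then gives that $t\mapsto\max_{\Sigma_t}f_\sigma(\cdot,t)$ is nonincreasing, so $f_\sigma\le\max_{\Sigma_0}f_\sigma\definedas C(\Sigma_0,\sigma)$, which is finite because $\Sigma_0$ is compact with $\mathcal{H}^2>0$ everywhere. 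This bound is literally equivalent to the asserted estimate $\newbtr{\accentset{\circ}{A}}^2/\left(\mathcal{H}^2\right)^2\le C\left(\mathcal{H}^2\right)^{-\sigma}$.

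The only step needing more than bookkeeping is the cancellation pattern producing the collapsed equation, and this is where the structure of the problem is used. The term $-\mathcal{H}^2 g$ of Proposition \ref{prop_nullmeancurvature2}, after multiplication by $\left(\mathcal{H}^2\right)^\sigma$, must combine with the $+\frac{\sigma}{2}\left(\mathcal{H}^2\right)^{\sigma+1}g$ produced by the reaction term of $\frac{\d}{\d t}\mathcal{H}^2$ into the damping term $-\frac{2-\sigma}{2}\mathcal{H}^2 f_\sigma$; without that good term the net reaction would be the positive quantity $+\frac{\sigma}{2}\left(\mathcal{H}^2\right)^{\sigma+1}g$ and the maximum principle would be unavailable — exactly the place where the Euclidean mean curvature flow argument requires more work. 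Similarly, the term $+\sigma(1-\sigma)g\left(\mathcal{H}^2\right)^{\sigma-2}\btr{\nabla\mathcal{H}^2}^2$ arising from $\Delta\left(\left(\mathcal{H}^2\right)^\sigma\right)$ has the unfavourable sign for $\sigma\in(0,1)$, but it is turned into the favourable term displayed above once the cross term $\spann{\nabla\mathcal{H}^2,\nabla g}$ is rewritten through $\nabla f_\sigma$. The endpoint cases are transparent: $\sigma=0$ recovers the preservation of upper bounds on $\btr{A}^2/\left(\mathcal{H}^2\right)^2$ already recorded in Remark \ref{bem_blowup}, and $\sigma=1$ even kills the transport term. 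I expect no genuine obstacle beyond organising this computation carefully.
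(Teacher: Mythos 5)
Your proof is correct and takes essentially the same approach as the paper: you introduce the same scale-broken quantity $f_\sigma=(\mathcal{H}^2)^\sigma\,\newbtr{\accentset{\circ}{A}}^2/(\mathcal{H}^2)^2$, derive the identical parabolic identity with the three nonpositive reaction terms, and conclude via the scalar maximum principle. The paper simply writes $f_\sigma=(\mathcal{H}^2)^\sigma f_0$ with $f_0=\newbtr{\accentset{\circ}{A}}^2/(\mathcal{H}^2)^2$ rather than $\newbtr{\accentset{\circ}{A}}^2/(\mathcal{H}^2)^{2-\sigma}$, but this is the same function and the computation and cancellations are the same.
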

	\begin{proof}
		We define $f_0\definedas \frac{\newbtr{\accentset{\circ}{A}}^2}{\left(\mathcal{H}^2\right)^2}$, and hence $f_0=\frac{\btr{A}^2}{\left(\mathcal{H}^2\right)^2}-\frac{1}{2}$ as $\tr_\gamma A=\mathcal{H}^2$. By Proposition \ref{prop_nullmeancurvature2} the evolution of $f_0$ is given by
		\[
		\frac{\d}{\d t}f_0=\Delta f_0+\frac{2}{\mathcal{H}^2}\spann{\nabla\mathcal{H}^2,\nabla f_0}-\mathcal{H}^2f_0-2\btr{\nabla\frac{A}{\mathcal{H}^2}}^2.
		\]
		We now consider $f_\sigma\definedas (\mathcal{H}^2)^\sigma f_0$ for some $\sigma\in[0,1]$. Then
		\begin{align*}
		\frac{\d}{\d t}f_\sigma
		=&\,(\mathcal{H}^2)^\sigma\frac{\d}{\d t}f_0+\sigma(\mathcal{H}^2)^{\sigma-1}f_0\frac{\d}{\d t}\mathcal{H}^2\\
		=&\,(\mathcal{H}^2)^\sigma\Delta f_0+\sigma(\mathcal{H}^2)^{\sigma-1}f_0\Delta \mathcal{H}^2+2(\mathcal{H}^2)^{\sigma-1}\spann{\nabla f_0,\nabla \mathcal{H}^2}\\
		&-\left(\mathcal{H}^2\right)^{\sigma+1}\left(1-\frac{1}{2}\sigma\right)f_0-2\left(\mathcal{H}^2\right)^{\sigma}\btr{\nabla\frac{A}{\mathcal{H}^2}}.
		\end{align*}
		Similar to before, we compute that
		\begin{align*}
		\Delta f_\sigma
		=&\,(\mathcal{H}^2)^\sigma \Delta f_0+2\sigma(\mathcal{H}^2)^{\sigma-1}\spann{\nabla f_0,\nabla \mathcal{H}^2}\\
		&+\sigma(\sigma-1)f_0(\mathcal{H}^2)^{\sigma-2}\btr{\nabla\mathcal{H}^2}^2+\sigma(\mathcal{H}^2)^{\sigma-1}f_0\Delta\mathcal{H}^2,
		\end{align*}
		and
		\begin{align*}
		\spann{\nabla f_\sigma,\nabla\mathcal{H}^2}&=(\mathcal{H}^2)^\sigma\spann{\nabla f_0,\nabla\mathcal{H}^2}+\sigma(\mathcal{H}^2)^{\sigma-1}f_0\btr{\nabla\mathcal{H}^2}^2.
		\end{align*}
		Therefore, we have that
		\begin{align*}
		\frac{\d}{\d t}f_\sigma
		=&\,\Delta f_\sigma+2(1-\sigma)(\mathcal{H}^2)^{\sigma-1}\spann{\nabla f_0,\nabla\mathcal{H}^2}+\sigma(1-\sigma)f_0(\mathcal{H}^2)^{\sigma-2}\btr{\nabla\mathcal{H}^2}^2\\
		&-\left(\mathcal{H}^2\right)^{\sigma+1}\left(1-\frac{1}{2}\sigma\right)f_0-2\left(\mathcal{H}^2\right)^{\sigma}\btr{\nabla\frac{A}{\mathcal{H}^2}}\\
		=&\,\Delta f_\sigma+\frac{2(1-\sigma)}{\mathcal{H}^2}\spann{\nabla f_\sigma,\nabla \mathcal{H}^2}-\sigma(1-\sigma)f_\sigma(\mathcal{H}^2)^{-2}\btr{\nabla\mathcal{H}^2}^2\\
		&-\mathcal{H}^2\left(1-\frac{1}{2}\sigma\right)f_\sigma-2\left(\mathcal{H}^2\right)^{\sigma}\btr{\nabla\frac{A}{\mathcal{H}^2}}\\
		\le&\,\Delta f_\sigma+\frac{2(1-\sigma)}{\mathcal{H}^2}\spann{\nabla f_\sigma,\nabla \mathcal{H}^2}
		\end{align*}
		for $\sigma\in[0,1]$. Thus the claim follows by the parabolic maximum principle.
	\end{proof}
	Using this estimate, we establish a gradient bound, that will allow us to conclude that the ratio between the minimum and maximum of $\mathcal{H}^2$ and therefore $\operatorname{R}$ converges to $1$ as $t\to T_{max}$. We will state this in the framework of Ricci flow.
	\begin{thm}\label{thm_gradientestiamte}
		Let $\{\Sigma_t\}_{t\in[0,T_{max})}$ be a family of closed, topological $2$-spheres with strictly positive scalar curvature evolving under Ricci flow. For any $\eta>0$, there exists $C_{\eta}>0$ only depending on $\eta$ and $\Sigma_0$, such that
		\begin{align*}
		\btr{\nabla \operatorname{R}}\le \eta^2\operatorname{R}^{\frac{3}{2}}+C_\eta.
		\end{align*}
		for all $t\in[0,T_{max})$.
	\end{thm}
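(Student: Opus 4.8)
The plan is to run a Huisken-type gradient estimate (compare \cite[Theorem 8.6]{andrewschowguentherlangford}) in the equivalent framework of null mean curvature flow, using the scale-breaking bound of Theorem \ref{thm_nullmeancurvature1} in place of the pointwise pinching hypothesis available for mean curvature flow. By the Gauß equation \eqref{eq_gaußcurvature} one has $\operatorname{R}=\tfrac12\mathcal{H}^2$, so the assertion is equivalent to a bound on $\btr{\nabla\mathcal{H}^2}$, and by the Codazzi estimate \eqref{eq_Agradientestimate} it suffices to show
\[
\btr{\nabla A}^2\le\varepsilon\left(\mathcal{H}^2\right)^3+C_\varepsilon
\]
for all sufficiently small $\varepsilon>0$ with $C_\varepsilon=C_\varepsilon(\Sigma_0)$; then $\sqrt{a+b}\le\sqrt a+\sqrt b$ together with $\operatorname{R}=\tfrac12\mathcal{H}^2$ converts this into the claimed estimate upon choosing $\varepsilon$ small in terms of $\eta$. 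Throughout, the parabolic maximum principle applied to the evolution of $\mathcal{H}^2$ in Proposition \ref{prop_nullmeancurvature1} gives $\mathcal{H}^2\ge\min_{\Sigma_0}\mathcal{H}^2>0$, and comparison with $\dot y=\tfrac12 y^2$ bounds the extinction time $T_{\max}$ in terms of $\Sigma_0$.

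First I would derive a Bochner-type evolution equation for $\btr{\nabla A}^2$: starting from $\frac{\d}{\d t}A_{ij}=\Delta A_{ij}-\tfrac12\mathcal{H}^2\accentset{\circ}{A}_{ij}$ (proof of Proposition \ref{prop_nullmeancurvature1}), commuting $\nabla$ past $\Delta$ with the explicit curvature \eqref{eq_riemannminkowski} (so that $\nabla\Rm$ is controlled by $\nabla\mathcal{H}^2$), using the Codazzi symmetry of Proposition \ref{prop_codazziminkowski2}, and tracking the metric variation $\frac{\d}{\d t}\gamma_{ij}=-\tfrac12\mathcal{H}^2\gamma_{ij}$, one obtains
\[
\frac{\d}{\d t}\btr{\nabla A}^2=\Delta\btr{\nabla A}^2-2\btr{\nabla^2 A}^2+\mathcal{R},
\]
where $\mathcal{R}$ is a sum of contractions of at most two of $\mathcal{H}^2,A,\nabla\mathcal{H}^2,\nabla A$. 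Using $\btr{A}\le C\mathcal{H}^2$ (Remark \ref{bem_blowup}), $\btr{\nabla\mathcal{H}^2}\le C\btr{\nabla A}$ and Young's inequality, the part of $\mathcal{R}$ free of any factor $\accentset{\circ}{A}$ is bounded by $c_0\mathcal{H}^2\btr{\nabla A}^2$, and the remainder carries a factor $\accentset{\circ}{A}$. In parallel, differentiating $\btr{A}^2=\newbtr{\accentset{\circ}{A}}^2+\tfrac12(\mathcal{H}^2)^2$ and inserting the evolution equations of Proposition \ref{prop_nullmeancurvature1} yields the clean identity $\frac{\d}{\d t}\newbtr{\accentset{\circ}{A}}^2=\Delta\newbtr{\accentset{\circ}{A}}^2+\btr{\nabla\mathcal{H}^2}^2-2\btr{\nabla A}^2$, hence, with \eqref{eq_Agradientestimate},
\[
\frac{\d}{\d t}\left(\mathcal{H}^2\newbtr{\accentset{\circ}{A}}^2\right)\le\Delta\left(\mathcal{H}^2\newbtr{\accentset{\circ}{A}}^2\right)-2\spann{\nabla\mathcal{H}^2,\nabla\newbtr{\accentset{\circ}{A}}^2}+\tfrac12(\mathcal{H}^2)^2\newbtr{\accentset{\circ}{A}}^2-\tfrac23\mathcal{H}^2\btr{\nabla A}^2;
\]
the decisive point is that the good term $-\tfrac23\mathcal{H}^2\btr{\nabla A}^2$ is of the same order as the borderline term $c_0\mathcal{H}^2\btr{\nabla A}^2$ above.

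I would then study the test function $g\definedas\btr{\nabla A}^2+N\mathcal{H}^2\newbtr{\accentset{\circ}{A}}^2-\varepsilon(\mathcal{H}^2)^3$ with $N=N(\Sigma_0)$ large and $\varepsilon>0$ small. Differentiating $-\varepsilon(\mathcal{H}^2)^3$ via Proposition \ref{prop_nullmeancurvature1} produces the crucial negative term $-\tfrac32\varepsilon(\mathcal{H}^2)^4$ together with $+6\varepsilon\mathcal{H}^2\btr{\nabla\mathcal{H}^2}^2$; the mixed term $-2N\spann{\nabla\mathcal{H}^2,\nabla\newbtr{\accentset{\circ}{A}}^2}$ is rewritten, via $N\mathcal{H}^2\nabla\newbtr{\accentset{\circ}{A}}^2=\nabla g-\nabla\btr{\nabla A}^2-N\newbtr{\accentset{\circ}{A}}^2\nabla\mathcal{H}^2+3\varepsilon(\mathcal{H}^2)^2\nabla\mathcal{H}^2$, as a transport term $\propto\spann{\nabla\mathcal{H}^2,\nabla g}$ (harmless for the maximum principle), a term $-6\varepsilon\mathcal{H}^2\btr{\nabla\mathcal{H}^2}^2$ cancelling the previous one, a term absorbed into $-2\btr{\nabla^2 A}^2$ via Young up to a remainder $C(\mathcal{H}^2)^{-2}\btr{\nabla A}^4$, and a term $\tfrac{2N}{\mathcal{H}^2}\newbtr{\accentset{\circ}{A}}^2\btr{\nabla\mathcal{H}^2}^2$ carrying a factor $\accentset{\circ}{A}$. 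Choosing $N>\tfrac32 c_0$ lets $-\tfrac23 N\mathcal{H}^2\btr{\nabla A}^2$ absorb $c_0\mathcal{H}^2\btr{\nabla A}^2$ with room to spare; every term carrying $\accentset{\circ}{A}$ becomes, by Theorem \ref{thm_nullmeancurvature1} with a fixed $\sigma\in(0,1)$, subcritical with the gain of a power $(\mathcal{H}^2)^{-\sigma/2}$; and at a hypothetical first-contact point $(p_\ast,t_\ast)$ of $g-\lambda t$ with its initial maximum (for a large constant $\lambda$), the relation $\btr{\nabla A}^2\le\max_{\Sigma_0}g+\varepsilon(\mathcal{H}^2)^3$ controls the remainder $C(\mathcal{H}^2)^{-2}\btr{\nabla A}^4$. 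One then fixes a threshold $K=K(\varepsilon,N,\sigma,\Sigma_0)$ so that, wherever $\mathcal{H}^2>K$, all subcritical errors are dominated by $-\tfrac32\varepsilon(\mathcal{H}^2)^4$ and the spare part of $-\mathcal{H}^2\btr{\nabla A}^2$; while on $\{\mathcal{H}^2\le K\}$ the contact relation forces $\btr{\nabla A}^2$, and hence every geometric quantity at $(p_\ast,t_\ast)$, to be bounded by some $C_2=C_2(\Sigma_0,\varepsilon)$, so taking $\lambda>C_2$ makes $\frac{\d}{\d t}(g-\lambda t)<\Delta(g-\lambda t)$ there, contradicting the contact condition. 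The parabolic maximum principle then gives $g\le\max_{\Sigma_0}g+\lambda T_{\max}$, which yields $\btr{\nabla A}^2\le C_\varepsilon+\varepsilon(\mathcal{H}^2)^3$ with $C_\varepsilon$ depending only on $\varepsilon$ and $\Sigma_0$.

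The main obstacle lies in the second and third steps: deriving the evolution of $\btr{\nabla A}^2$ and verifying that, apart from the single designed borderline term, every reaction term either is strictly subcritical --- hence swallowed by $-\tfrac32\varepsilon(\mathcal{H}^2)^4$ once $\mathcal{H}^2$ is large --- or carries a compensating factor of $\accentset{\circ}{A}$ that is small by Theorem \ref{thm_nullmeancurvature1}; and in arranging the constants $\sigma,N,\varepsilon,K,\lambda$ in the correct dependency order so that all absorptions are genuinely non-circular.
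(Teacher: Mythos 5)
Your plan is a correct alternative in spirit but runs along a genuinely different track than the paper's proof. The paper never computes a Bochner formula for $\btr{\nabla A}^2$. Instead, it builds the strictly positive super-solution
\[
G_\varepsilon \definedas 2C_\varepsilon+\lr{\varepsilon+\tfrac{1}{2}}\lr{\mathcal{H}^2}^2-\btr{A}^2 = 2C_\varepsilon+\varepsilon\lr{\mathcal{H}^2}^2-\newbtr{\accentset{\circ}{A}}^2\ge C_\varepsilon>0,
\]
whose positivity already encodes Theorem \ref{thm_nullmeancurvature1} via Young, verifies $\lr{\partial_t-\Delta}G_\varepsilon\ge\tfrac{1}{4}\btr{\nabla\mathcal{H}^2}^2$ from Proposition \ref{prop_nullmeancurvature1} together with the Codazzi bound \eqref{eq_Agradientestimate}, and then studies the \emph{quotient} $\frac{\btr{\nabla\mathcal{H}^2}^2}{G_\varepsilon\mathcal{H}^2}$. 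For the numerator it simply cites the standard Ricci-flow evolution inequality $\lr{\partial_t-\Delta}\btr{\nabla\mathcal{H}^2}^2\le C\mathcal{H}^2\btr{\nabla\mathcal{H}^2}^2-2\btr{\nabla^2\mathcal{H}^2}^2$, so no new commutation is required. At a new spatial maximum of the quotient, the vanishing of its gradient delivers a Cauchy--Schwarz inequality that cancels the $\btr{\nabla^2\mathcal{H}^2}^2$ terms, and one arrives directly at the uniform bound $\frac{\btr{\nabla\mathcal{H}^2}^2}{G_\varepsilon\mathcal{H}^2}\le 4C(n)$, from which the claim follows after squaring out $G_\varepsilon$. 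By contrast, you work with the additive test function $g=\btr{\nabla A}^2+N\mathcal{H}^2\newbtr{\accentset{\circ}{A}}^2-\varepsilon\lr{\mathcal{H}^2}^3$ and therefore must (i) derive a full Bochner-type evolution for $\btr{\nabla A}^2$, including the commutation terms coming from $\Rm$ and $\nabla\Rm$ via \eqref{eq_riemannminkowski} and from $\partial_t\Gamma$, and then classify every reaction term as either borderline (no $\accentset{\circ}{A}$) or subcritical (carrying $\accentset{\circ}{A}$); and (ii) control the genuinely quadratic remainder $C\lr{\mathcal{H}^2}^{-2}\btr{\nabla A}^4$ by using the contact relation at the maximum. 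This is the principal gap in your sketch: the asserted form of $\mathcal{R}$ and the numerical constant $c_0$ entering the borderline term $c_0\mathcal{H}^2\btr{\nabla A}^2$ are never actually computed. The structure you describe is plausible, but without the full commutation computation there is no guarantee that the good contribution $-\frac{2}{3}N\mathcal{H}^2\btr{\nabla A}^2$ from the $N\mathcal{H}^2\newbtr{\accentset{\circ}{A}}^2$ piece can absorb all borderline reaction terms for a uniform $N$, and the bookkeeping of powers $\lr{\mathcal{H}^2}^{-\sigma}$ versus the spare $-\tfrac{3}{2}\varepsilon\lr{\mathcal{H}^2}^4$ has to be verified term by term. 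The quotient approach in the paper is more economical precisely because it sidesteps the Bochner formula for $\nabla A$ and the $\btr{\nabla A}^4$ remainder altogether, trading them for the single already-available estimate on $\btr{\nabla\mathcal{H}^2}^2$. If you fill in the Bochner computation and the constant-ordering carefully, your argument should close, but you should be aware that it is substantially longer and more delicate than what the paper actually does.
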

	\begin{bem}\label{bem_gradientestimate}
		As $\mathcal{H}^2=2\operatorname{R}$ by the Gauß Equation \eqref{eq_gaußcurvature}, it suffices to proof
		\[
			\btr{\nabla \mathcal{H}^2}\le \eta^2\left(\mathcal{H}^2\right)^{\frac{3}{2}}+C_\eta
		\]
		From this, we get the crucial gradient estimate 
		\[
		\btr{\nabla \operatorname{R}}\le \eta^2 \operatorname{R}^{\frac{3}{2}}_{max}
		\]
		for any $\eta>0$ and $t$ sufficiently close to $T_{max}$. This estimate allowed Hamilton to conclude that the ratio of $\operatorname{R}_{min}$ ad $\operatorname{R}_{max}$ converges to $1$ in the $3$-dimensional case using the Theorem of Bonnet--Myers, cf. \cite[Lemma 3.22]{chowluni}. We now established this estimate for $2$-dimensional Ricci flow and can argue analogously. Compare also the corresponding result by Huisken for mean curvature flow of convex surfaces, cf. \cite[Corollary 8.16]{andrewschowguentherlangford}.
	\end{bem}
	\begin{kor}\label{kor_gradientestimate}
		As $t\to T_{max}$,
		\begin{align*}
		\frac{\mathcal{H}^2_{max}}{\mathcal{H}^2_{min}}=\frac{\operatorname{R}_{max}}{\operatorname{R}_{min}}&\to 1,\\
		\operatorname{diam}(\Sigma_t)&\to 0.
		\end{align*}
	\end{kor}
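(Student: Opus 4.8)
The plan is to combine the gradient estimate of Theorem~\ref{thm_gradientestiamte} with the Bonnet--Myers theorem, after first showing that $\operatorname{R}_{min}\to\infty$ as $t\to T_{max}$; once that is in place the diameter claim follows immediately and the ratio claim is a short computation.

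First I would record two preliminary observations. Since $\mathcal{H}^2=2\operatorname{R}$ by \eqref{eq_gaußcurvature}, the evolution equation for $\mathcal{H}^2$ in Proposition~\ref{prop_nullmeancurvature1} is equivalent to $\frac{\d}{\d t}\operatorname{R}=\Delta\operatorname{R}+\operatorname{R}^2$; evaluating at a spatial minimum and using the parabolic maximum principle shows that $t\mapsto\operatorname{R}_{min}(t)$ is nondecreasing, so in particular $\operatorname{R}>0$ and the Gauß curvature $\mathcal{K}=\tfrac12\operatorname{R}$ stays positive along the flow. Second, by \eqref{eq_nullmeancurvatureflow} together with $\theta=2\omega\mathcal{K}$, null mean curvature flow reads $\frac{\d}{\d t}\omega=-\omega\mathcal{K}$; evaluating at a point realizing $\max_{\Sbb^2}\omega(\cdot,t)$ and using $\mathcal{K}\ge\mathcal{K}_{min}=\tfrac12\operatorname{R}_{min}$ gives
\[
\frac{\d}{\d t}\log\Big(\max_{\Sbb^2}\omega(\cdot,t)\Big)\le-\tfrac12\operatorname{R}_{min}(t).
\]
Because the cross sections shrink to the vertex of $\mathcal{N}$ by Corollary~\ref{thm_singularities}, i.e.\ $\max_{\Sbb^2}\omega(\cdot,t)\to0$, the left-hand side integrates to $-\infty$ over $[0,T_{max})$; as $T_{max}<\infty$ and $\operatorname{R}_{min}$ is nondecreasing, this forces $\operatorname{R}_{min}(t)\to\infty$.

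With this in hand, positivity of $\mathcal{K}$ and Bonnet--Myers give $\operatorname{diam}(\Sigma_t)\le\pi/\sqrt{\mathcal{K}_{min}}=\pi\sqrt{2}\,\operatorname{R}_{min}^{-1/2}$, so $\operatorname{diam}(\Sigma_t)\to0$. For the ratio I would fix $\eta>0$ with associated constant $C_\eta$ from Theorem~\ref{thm_gradientestiamte} and observe that on each $\Sigma_t$
\[
\btr{\nabla\operatorname{R}^{-1/2}}=\tfrac12\operatorname{R}^{-3/2}\btr{\nabla\operatorname{R}}\le\tfrac12\eta^2+\tfrac12 C_\eta\operatorname{R}^{-3/2}\le\tfrac12\eta^2+\tfrac12 C_\eta\operatorname{R}_{min}^{-3/2}=:\mu_\eta(t),
\]
so that $\operatorname{R}^{-1/2}$ is $\mu_\eta(t)$-Lipschitz on $(\Sigma_t,\gamma_t)$. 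Hence $\operatorname{R}_{min}^{-1/2}-\operatorname{R}_{max}^{-1/2}\le\mu_\eta(t)\operatorname{diam}(\Sigma_t)\le\pi\sqrt{2}\,\mu_\eta(t)\operatorname{R}_{min}^{-1/2}$, and dividing by $\operatorname{R}_{min}^{-1/2}$ gives $\operatorname{R}_{min}/\operatorname{R}_{max}\ge(1-\pi\sqrt{2}\,\mu_\eta(t))^2$ once $\pi\sqrt{2}\,\mu_\eta(t)<1$. Since $\operatorname{R}_{min}\to\infty$ forces $\mu_\eta(t)\to\tfrac12\eta^2$, it follows that $\liminf_{t\to T_{max}}\operatorname{R}_{min}/\operatorname{R}_{max}\ge(1-\tfrac{\pi}{\sqrt2}\eta^2)^2$; letting $\eta\to0$ and using $\operatorname{R}_{min}\le\operatorname{R}_{max}$ yields $\operatorname{R}_{min}/\operatorname{R}_{max}\to1$, and hence $\mathcal{H}^2_{max}/\mathcal{H}^2_{min}=\operatorname{R}_{max}/\operatorname{R}_{min}\to1$ by \eqref{eq_gaußcurvature}.

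I expect the main obstacle to be the lower blow-up $\operatorname{R}_{min}\to\infty$: the gradient estimate by itself only yields the scale-invariant inequality $\operatorname{R}_{max}/\operatorname{R}_{min}-1\le\pi\sqrt{2}\,\eta^2\,(\operatorname{R}_{max}/\operatorname{R}_{min})^{3/2}$, which does not close without an a priori bound on the ratio. It is the geometric input from Corollary~\ref{thm_singularities} --- that the cross sections collapse to the tip of the lightcone --- that upgrades the gradient estimate into genuine decay of both the curvature ratio and the diameter.
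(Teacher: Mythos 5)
Your overall geometric computation is clean, but the argument has a genuine gap: it is circular within the logic of Section~\ref{sec_hamilton}. To obtain $\operatorname{R}_{\min}\to\infty$ you invoke Corollary~\ref{thm_singularities}, specifically the fact that the cross sections collapse to the tip of the cone, i.e.\ $\max\omega\to 0$. But Corollary~\ref{thm_singularities} is derived in Section~\ref{sec_equialence} \emph{from} the Hamilton--Chow theorem — precisely the result that Theorem~\ref{thm_mainthm} (and this corollary as one of its ingredients) is supposed to reprove from scratch. Your preliminary observations (monotonicity of $\operatorname{R}_{\min}$ from the maximum principle, and the ODE $\frac{\d}{\d t}\log\omega_{\max}\le-\frac12\operatorname{R}_{\min}$) are correct, but the maximum principle alone only gives the barrier $\operatorname{R}_{\min}(t)\ge \bigl(\operatorname{R}_{\min}(0)^{-1}-t\bigr)^{-1}$, which blows up at $t=1/\operatorname{R}_{\min}(0)$, and there is no a priori reason that $T_{\max}$ equals this time rather than being strictly smaller (only $\operatorname{R}_{\max}$ must blow up at $T_{\max}$). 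You correctly identify at the end that the estimate does not close without an extra input; the problem is that the extra input you reach for is the theorem itself.

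The route the paper intends, signposted in Remark~\ref{bem_gradientestimate}, is Hamilton's argument as presented in \cite[Lemma~3.22]{chowluni}, and it works from $\operatorname{R}_{\max}\to\infty$ alone (which \emph{is} available non-circularly, since the flow becomes singular in finite time and by Remark~\ref{bem_blowup} this forces $\mathcal{H}^2\to\infty$). One localizes around a point $x_0$ realizing $\operatorname{R}_{\max}$: since $\operatorname{R}_{\max}\to\infty$, for $t$ close to $T_{\max}$ the additive constant $C_\eta$ in Theorem~\ref{thm_gradientestiamte} is dominated and $|\nabla\operatorname{R}|\le 2\eta^2\operatorname{R}_{\max}^{3/2}$; hence on the geodesic ball of radius $\rho\operatorname{R}_{\max}^{-1/2}$ about $x_0$ one has $\operatorname{R}\ge\operatorname{R}_{\max}(1-2\eta^2\rho)$. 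Applying Bonnet--Myers to a minimizing geodesic issuing from $x_0$ and using this curvature lower bound \emph{on the ball} (not a global $\operatorname{R}_{\min}$ bound), one concludes that for $\rho$ chosen slightly larger than $\pi\sqrt{2}$ and $\eta$ small, no point of $\Sigma_t$ can lie outside this ball, i.e.\ $\operatorname{diam}(\Sigma_t)\le\rho\operatorname{R}_{\max}^{-1/2}$. Feeding this diameter bound back into the Lipschitz estimate then gives $\operatorname{R}_{\min}\ge(1-2\eta^2\rho)\operatorname{R}_{\max}$, and letting $\eta\to 0$ yields both conclusions of the corollary. Note this is the same Lipschitz-plus-Bonnet--Myers mechanism you use, but applied on the small ball governed by $\operatorname{R}_{\max}$ rather than globally governed by $\operatorname{R}_{\min}$; this is exactly what removes the need for the a priori lower blow-up and hence the circularity.
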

	\begin{proof}[Proof of Theorem \ref{thm_gradientestiamte}]
		We fix any $\sigma\in(0,1]$, and by Theorem \ref{thm_nullmeancurvature1}, there exists $C$ only depending on $\Sigma_0$ (and our fixed choice of $\sigma$), such that 
		\begin{align}\label{eq_thmgradientestimate1}
		\frac{\newbtr{\accentset{\circ}{A}}^2}{\left(\mathcal{H}^2\right)^2}\le C(\mathcal{H}^2)^{-\sigma}.
		\end{align}
		Using Young's Inequality, we find that for any $\varepsilon>0$, there exists $C_\varepsilon>0$ (only depending on $\varepsilon$ and $\Sigma_0$), such that
		\begin{align}\label{eq_thmgradietestiamte2}
		\newbtr{\accentset{\circ}{A}}^2\le \varepsilon\left(\mathcal{H}^2\right)^2+C_\varepsilon.
		\end{align}
		We define
		\begin{align}\label{eq_thmgradietestiamte3}
		G_\varepsilon\definedas 2C_\varepsilon+(\varepsilon+\frac{1}{2})\left(\mathcal{H}^2\right)^2-\btr{A}^2=2 C_\varepsilon+\varepsilon\left(\mathcal{H}^2\right)^2-\newbtr{\accentset{\circ}{A}}^2\ge C_\varepsilon>0.
		\end{align}
		Computing the evolution of \eqref{eq_thmgradietestiamte3} and abbreviating $\frac{\d}{\d t}$ as $\partial_t$, we find that
		\begin{align*}
		\left(\partial_t-\Delta\right)G_\varepsilon
		&=\varepsilon\left(\mathcal{H}^2\right)^3+2\btr{\nabla A}^2-(1+2\varepsilon)\btr{\nabla\mathcal{H}^2}^2.
		\end{align*}
		Recall that $\btr{\nabla A}^2\ge \frac{3}{4}\btr{\nabla\mathcal{H}^2}^2$ as proven in Proposition \ref{prop_codazziminkowski2}. Hence, for any $0<\varepsilon\le \frac{1}{8}$ we find
		\[
		\btr{\nabla A}^2-(\frac{1}{2}+\varepsilon)\btr{\nabla\mathcal{H}^2}\ge \frac{1}{8}\btr{\nabla\mathcal{H}^2}^2,
		\]
		and thus we have
		\begin{align}\label{eq_thmgradientestimate4}
		\left(\partial_t-\Delta\right)G_\varepsilon\ge \frac{1}{4}\btr{\nabla \mathcal{H}^2}^2
		\end{align}
		for any $0<\varepsilon\le\frac{1}{8}$.
		Also recall that by Proposition \ref{prop_nullmeancurvature1}, we have
		\begin{align}\label{eq_thmgradientestimate5}
		\left(\partial_t-\Delta\right)\mathcal{H}^2\ge 0,
		\end{align}
		and since $\mathcal{H}^2=2R$ and we are evolving the surface under Ricci flow, we have that
		\begin{align}\label{eq_thmgradientestimate6}
		\left(\partial_t-\Delta\right)\btr{\nabla \mathcal{H}^2}^2&\le C\mathcal{H}^2\btr{\nabla \mathcal{H}^2}^2-2\btr{\nabla^2\mathcal{H}^2}^2,
		\end{align}
		cf. \cite[Chapter 3]{chowluni}. We now look at a new maximum of $\frac{\btr{\nabla\mathcal{H}^2}^2}{G_\varepsilon\mathcal{H}^2}$ at a time and point $(t,p)$. In particular, we have that
		\[
		0=\nabla\frac{\btr{\nabla\mathcal{H}^2}^2}{G_\varepsilon\mathcal{H}^2}=2\frac{\nabla^i\mathcal{H}^2\nabla\nabla_i\mathcal{H}^2}{G_\varepsilon\mathcal{H}^2}-\frac{\btr{\nabla\mathcal{H}^2}^2}{G_\varepsilon\mathcal{H}^2}\left(\frac{\nabla G_\varepsilon}{G_\varepsilon}+\frac{\nabla \mathcal{H}^2}{\mathcal{H}^2}\right),
		\]
		so at $(p,t)$ it holds that
		\begin{align}\label{eq_thmgradientestimate7}
		4\frac{\btr{\nabla\mathcal{H}^2}^2}{G_\varepsilon\mathcal{H}^2}\spann{\frac{\nabla G_\varepsilon}{G_\varepsilon},\frac{\nabla\mathcal{H}^2}{\mathcal{H}^2}}\le \frac{\btr{\nabla\mathcal{H}^2}^2}{G_\varepsilon\mathcal{H}^2}\btr{\frac{\nabla G_\varepsilon}{G_\varepsilon}+\frac{\nabla \mathcal{H}^2}{\mathcal{H}^2}}^2\le 4\frac{\btr{\nabla^2\mathcal{H}^2}^2}{G_\varepsilon\mathcal{H}^2}.
		\end{align}
		Moreover, direct computation yields
		\begin{align*}
		\left(\partial_t-\Delta\right)\left(\frac{\btr{\nabla\mathcal{H}^2}^2}{G_\varepsilon\mathcal{H}^2}\right)
		=&\,\frac{(\partial_t-\Delta)\btr{\nabla\mathcal{H}^2}^2}{G_\varepsilon\mathcal{H}^2}-\frac{\btr{\nabla\mathcal{H}^2}^2}{G_\varepsilon\mathcal{H}^2}\left(\frac{(\partial_t-\Delta)G_\varepsilon}{G_\varepsilon}+\frac{(\partial_t-\Delta)\mathcal{H}^2}{\mathcal{H}^2}\right)\\
		&-\frac{2}{G_\varepsilon\mathcal{H}^2}\spann{\nabla\left(\frac{\btr{\nabla\mathcal{H}^2}^2}{G_\varepsilon\mathcal{H}^2}\right),\nabla(G_\varepsilon\mathcal{H}^2)}-\frac{2\btr{\nabla\mathcal{H}^2}^2}{G_\varepsilon\mathcal{H}^2}\spann{\frac{\nabla G_\varepsilon}{G_\varepsilon},\frac{\nabla\mathcal{H}^2}{\mathcal{H}^2}}.
		\end{align*}
		Hence, using \eqref{eq_thmgradientestimate4}, \eqref{eq_thmgradientestimate5}, \eqref{eq_thmgradientestimate6}, and \eqref{eq_thmgradientestimate7}, we see that at $(p,t)$
		\begin{align*}
		0&\le \frac{1}{G_\varepsilon\mathcal{H}^2}\left(C(n)\mathcal{H}^2\btr{\nabla\mathcal{H}^2}^2-2\btr{\nabla^2\mathcal{H}^2}^2\right)-\frac{\btr{\nabla\mathcal{H}^2}^2}{G_\varepsilon\mathcal{H}^2}\left(\frac{1}{4}\frac{\btr{\nabla\mathcal{H}^2}^2}{G_\varepsilon}\right)+\frac{2\btr{\nabla^2\mathcal{H}^2}^2}{G_\varepsilon\mathcal{H}^2}\\
		&=\frac{\btr{\nabla\mathcal{H}^2}^2}{G_\varepsilon\mathcal{H}^2}\left(C(n)\mathcal{H}^2-\frac{1}{4}\frac{\btr{\nabla\mathcal{H}^2}^2}{G_\varepsilon}\right),
		\end{align*}
		so after rearranging, we have that
		\begin{align*}
		\frac{\btr{\nabla\mathcal{H}^2}^2}{G_\varepsilon\mathcal{H}^2}\le 4C(n)
		\end{align*}
		at any new maximum. So we find that
		\begin{align*}
		\frac{\btr{\nabla\mathcal{H}^2}^2}{G_\varepsilon\mathcal{H}^2}\le \max\left(\max\limits_{\Sigma_0}\frac{\btr{\nabla\mathcal{H}^2}^2}{G_\varepsilon\mathcal{H}^2}, 4C(n)\right)=:\widetilde{C},
		\end{align*}
		and in particular
		\begin{align*}
		\btr{\nabla\mathcal{H}^2}^2\le\widetilde{C}G_\varepsilon\mathcal{H}^2\le \widetilde{C}\mathcal{H}^2\left(\varepsilon\left(\mathcal{H}^2\right)^2+2C_\varepsilon\right).
		\end{align*}
		After taking a square root, the proof now follows for any $\eta>0$ using Young's inequality again and choosing $0<\varepsilon<\frac{1}{8}$ sufficiently small.
	\end{proof}
	We briefly recall some properties of $n$-dimensional Ricci flow renormalized by volume (cf. \cite[Chapter 3.6]{chowluni}), i.e.,
	\begin{align}\label{eq_renormricciflow}
	\frac{\d} {\d\widetilde{t}}\widetilde{\gamma}(\,\widetilde{t}\,)&=-\widetilde{\Ric}(\,\widetilde{t}\,)+\frac{2}{n}\widetilde{r}\widetilde{\gamma}(\,\widetilde{t}\,),
	\end{align}
	where $\widetilde{r}=\fint\widetilde{\operatorname{R}}$, such that along any solution we have that $\operatorname{Vol}(\widetilde{\gamma}(\,\widetilde{t}\,))=\operatorname{Vol}(\widetilde{\gamma}(0))=\operatorname{const.}$. Given a solution of Ricci flow $\gamma(t)$, $t\in[0,T)$, the metrics $\widetilde{\gamma}(\,\widetilde{t}\,)\definedas c(t)\gamma(t)$, with
	\[
	c(t)\definedas\exp\left(\frac{2}{n}\int\limits_0^tr(\tau)\d\tau\right),\text{ }\widetilde{t}(t)\definedas\int\limits_0^tc(\tau)\d\tau,
	\]
	satisfy \eqref{eq_renormricciflow} with initial condition $\widetilde{\gamma}(0)=\gamma(0)$, so we can always renormalize a given solution of Ricci flow. Moreover, we have the following transformation laws for evolution equations by Hamilton:
	\begin{lem}[{Hamilton, see \cite[Lemma 3.26]{chowluni}}]\label{lemma_renormalizedflow}
		If an expression $X=X(\gamma)$ formed algebraically from the metric and the Riemann curvature tensor has degree $k$, i.e.,\linebreak $X(c\gamma)=c^kX(\gamma)$, and if under the Ricci flow
		\[
		\partial_tX=\Delta X+Y,
		\]
		then the degree of $Y$ is $k-1$ and the evolution under the normalized Ricci flow \linebreak
		{$\frac{\partial} {\partial\widetilde{t}}\widetilde{\gamma}_{ij}=-\widetilde{\Ric}_{ij}+\frac{2}{n}\widetilde{r}\widetilde{\gamma}_{ij}$}
		of $\widetilde{X}:=X(\widetilde{\gamma})$ is given by
		\[
		\partial_{\widetilde{t}}\widetilde{X}=\widetilde{\Delta}\widetilde{X}+\widetilde{Y}+k\frac{2}{n}\widetilde{X}.
		\]
	\end{lem}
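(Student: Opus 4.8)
The plan is to treat the statement as a pure scaling computation, built from the weights of the objects under a constant conformal change of metric together with the chain rule in the reparametrized time. Recall from the discussion preceding the lemma that the renormalization is $\widetilde\gamma(\widetilde t)=c(t)\gamma(t)$ with $c(t)=\exp\!\big(\tfrac2n\int_0^t r\big)$ and $\widetilde t(t)=\int_0^t c$, so that $\partial_{\widetilde t}=\tfrac1c\partial_t$, $\dot c=\tfrac2n r\,c$, and $\widetilde\gamma$ solves the normalized flow \eqref{eq_renormricciflow}. The rest is bookkeeping.

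First I would record the behaviour of the relevant quantities under a \emph{constant} rescaling $\gamma\mapsto c\gamma$: the Levi--Civita connection is unchanged, hence the Laplacian appearing in the evolution equation (one inverse metric, two covariant derivatives) scales with weight $-1$, i.e.\ $\Delta_{c\gamma}=c^{-1}\Delta_\gamma$; the scalar curvature scales as $\operatorname{R}(c\gamma)=c^{-1}\operatorname{R}(\gamma)$; and $\d\mu_{c\gamma}=c^{n/2}\d\mu_\gamma$. The last two give $\widetilde r=\fint\widetilde{\operatorname{R}}\,\d\widetilde\mu=c^{-1}\fint\operatorname{R}\,\d\mu=c^{-1}r$, so that $\dot c=\tfrac2n\widetilde r\,c^2$. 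From the hypothesis $X(c\gamma)=c^kX(\gamma)$ we then get $\widetilde X=c^kX$ and, since $c$ is constant, $\widetilde\Delta\widetilde X=c^{-1}\Delta(c^kX)=c^{k-1}\Delta X$.

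The one step that is an argument rather than a computation is the claim that $Y$ has degree $k-1$, whence $\widetilde Y=c^{k-1}Y$. For this I would invoke the parabolic scale invariance of Ricci flow: if $\gamma(t)$ is a solution, so is $\gamma_\lambda(t):=\lambda\,\gamma(\lambda^{-1}t)$ for every constant $\lambda>0$. Evaluating $X$ along $\gamma_\lambda$ and differentiating in $t$ gives $\partial_t\big(X(\gamma_\lambda(t))\big)=\partial_t\big(\lambda^kX(\gamma(\lambda^{-1}t))\big)=\lambda^{k-1}\big(\Delta X+Y\big)$, while the evolution equation applied directly to $\gamma_\lambda$ gives $\partial_t\big(X(\gamma_\lambda)\big)=\Delta_{\gamma_\lambda}X(\gamma_\lambda)+Y(\gamma_\lambda)=\lambda^{k-1}\Delta X+Y(\gamma_\lambda)$; comparing the two yields $Y(\lambda\gamma)=\lambda^{k-1}Y(\gamma)$. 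Since $X$ and $Y$ are by assumption universal algebraic expressions in the metric and curvature, this is legitimate.

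Finally I would assemble the pieces. Using $\partial_{\widetilde t}=\tfrac1c\partial_t$, the chain rule and $\dot c=\tfrac2n\widetilde r\,c^2$,
\begin{align*}
\partial_{\widetilde t}\widetilde X
&=\frac1c\,\partial_t\!\big(c^kX\big)
=\frac1c\Big(kc^{k-1}\dot c\,X+c^k\big(\Delta X+Y\big)\Big)\\
&=\frac{2k}{n}\widetilde r\,c^kX+c^{k-1}\Delta X+c^{k-1}Y
=\widetilde\Delta\widetilde X+\widetilde Y+\frac{2k}{n}\widetilde r\,\widetilde X,
\end{align*}
where in the last equality I used $c^kX=\widetilde X$, $c^{k-1}\Delta X=\widetilde\Delta\widetilde X$ and $c^{k-1}Y=\widetilde Y$ from the previous steps; this is the asserted evolution equation, with reaction term $k\tfrac2n\widetilde r\,\widetilde X$. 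I do not expect a genuine obstacle: the only care needed is keeping the scaling weights straight and checking that the $\dot c$-term generated by the time reparametrization is exactly what promotes $\Delta X+Y$ to $\widetilde\Delta\widetilde X+\widetilde Y$ plus the single reaction term; the degree-$(k-1)$ property of $Y$ is the lone conceptual point, and it is handled by the parabolic rescaling above.
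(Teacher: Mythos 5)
Your proof is correct, and it is the standard argument (what the cited source \cite[Lemma 3.26]{chowluni} has in mind); the paper itself does not reprove the lemma but only cites it, so there is no competing paper proof to contrast with. All the key steps check out: the constant-rescaling weights for $\Delta$, $\operatorname{R}$, $\d\mu$, hence $\widetilde r=c^{-1}r$ and $\dot c=\tfrac{2}{n}\widetilde r c^{2}$; the parabolic-rescaling argument that forces $Y$ to have degree $k-1$ (and your justification correctly only uses that $\gamma_\lambda(t)=\lambda\gamma(\lambda^{-1}t)$ is again a Ricci flow with $\mathrm{Ric}$ scale-invariant); and the chain-rule assembly that converts $\partial_t X=\Delta X+Y$ into the normalized equation.

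One point worth flagging: your computation produces the reaction term $\tfrac{2k}{n}\,\widetilde r\,\widetilde X$, whereas the lemma as typeset in the paper reads $k\tfrac{2}{n}\widetilde X$ with no $\widetilde r$. Your version is the dimensionally consistent one (the reaction term must have degree $k-1$, which requires a factor of degree $-1$, i.e. $\widetilde r$) and agrees with \cite[Lemma 3.26]{chowluni}, and indeed it is the version the paper implicitly uses later in Lemma \ref{lem_renormalizedflow}, where the degree $-2$ quantity $\widetilde\Psi$ acquires the decay term $-\widetilde C\widetilde\Psi$ with $\widetilde C$ proportional to the (constant, positive) $\widetilde r$. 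So the omission of $\widetilde r$ in the displayed statement appears to be a typo, not a gap in your argument; the same applies to the missing factor $2$ in front of $\widetilde{\Ric}$ in \eqref{eq_renormricciflow}, which your choice $c(t)=\exp(\tfrac{2}{n}\int_0^t r)$ implicitly corrects.
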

	Recall that by \cite[Remark 3.27]{chowluni}, this lemma also extends to the corresponding partial differential inequalities if $Y$ is of degree $k-1$, and is further also applicable to arbitrary tensor derivatives of such expressions as used frequently throughout \cite[Chapter 3]{chowluni}.
	\begin{bem}
		In this section, as before, we will only look at the case when $n=2$ and $\gamma(t)$ is conformal to the round sphere for each $t$. Thus $\widetilde{\gamma}(\,\widetilde{t}\,)$ is conformally round, and by the Gauß--Bonnet Theorem
		\[	
		\widetilde{r}(t)=\frac{8\pi}{\operatorname{Vol}(\widetilde{\gamma}(\,\widetilde{t}\,))}=\frac{8\pi}{\operatorname{Vol}(\widetilde{\gamma}(0))}
		\]
		is positive and remains constant along the flow.
	\end{bem}
	From now on, will assume without loss of generality that 
	\begin{align}\label{eq_scalarcurvatureratio}
	\frac{1}{2}\operatorname{R}_{max}(t)\le \operatorname{R}(x,t)
	\end{align}
	for all $t\in[0,T_{max}), x\in\Sigma_t$ (this is ultimately satisfied for $t$ sufficiently close to $T_{max}$ due to Corollary \ref{kor_gradientestimate}). Note that due to the relation between $\mathcal{H}^2$ and $\operatorname{R}$ via the Gauß equation \eqref{eq_gaußcurvature}, combining Proposition \ref{prop_nullmeancurvature1} with the fact that $\operatorname{R}_{max}\to\infty$ as $t\to T$, we find that $\operatorname{R}_{max}\ge (T-t)^{-1}$. In particular,
	\begin{align}\label{eq_scalarcurvlowerbound}
	\operatorname{R}(t,x)\ge \frac{1}{2(T-t)}.
	\end{align}
	In the following, we will switch freely between the framework of (renormalized) $2$-d Ricci flow and null mean curvature flow along the past-pointing standard lightcone. Recall that most importantly, bounds for $\mathcal{H}^2$ and its derivatives correspond directly to bounds on the scalar curvature and its derivatives via the Gauß equation \eqref{eq_gaußcurvature}.
	In our analysis of the renormalized flow we will closely follows the outline of Hamilton's strategy presented in \cite[Chapter 3]{chowluni} for $3$-dimensional Ricci flow, and include the proofs for the sake of completeness. We start by establishing the following lemma:
	\begin{lem}\label{lem_renormalizedflow}
		For the renormalized flow \eqref{eq_renormricciflow}, we have that
		\begin{enumerate}
			\item[\emph{(i)}] $\widetilde{T}=\infty$,
			\item[\emph{(ii)}] $
			\lim\limits_{\widetilde{t}\to\infty}\frac{\widetilde{\mathcal{H}}^2_{min}}{\widetilde{\mathcal{H}}^2_{max}}=1
			$
			\item[\emph{(iii)}] There exists $C>0$ such that $\frac{1}{C}\le \widetilde{\mathcal{H}}^2_{min}(\,\widetilde{t}\,)\le \widetilde{\mathcal{H}}^2_{max}(\,\widetilde{t}\,)\le C$ for all $\widetilde{t}$,
			\item[\emph{(iv)}] $\operatorname{diam}(\widetilde{\gamma}(\,\widetilde{t}\,))\le C$
			\item [\emph{(v)}] $\newbtr{\accentset{\circ}{\widetilde{A}}}^2\le Ce^{-\delta \widetilde{t}}$,
			\item [\emph{(vi)}] $\btr{\widetilde{\nabla}\widetilde{\mathcal{H}}^2}^2\le Ce^{-\delta \widetilde{t}}$,
			\item [\emph{(vii)}] $\widetilde{\mathcal{H}}^2_{max}-\widetilde{\mathcal{H}}^2_{min}\le Ce^{-\delta \widetilde{t}}$.
		\end{enumerate}
	\end{lem}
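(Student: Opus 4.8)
The plan is to translate, one by one, the estimates already established for the unnormalized null mean curvature flow into the renormalized setting using Hamilton's transformation law, \Cref{lemma_renormalizedflow}, together with the fact (valid since $n=2$ and $\gamma(t)$ stays conformally round) that $\widetilde{r}=8\pi/\operatorname{Vol}(\widetilde\gamma(0))$ is a positive constant along the flow. First I would settle (iii): since $\widetilde r$ is a fixed positive constant, the ODE comparison for $\widetilde{\mathcal H}^2_{\min}$ and $\widetilde{\mathcal H}^2_{\max}$ coming from the renormalized version of $(\partial_t-\Delta)\mathcal H^2=\tfrac12(\mathcal H^2)^2$ — which by \Cref{lemma_renormalizedflow} (degree $k=-1$, so an extra $-\widetilde r\,\widetilde{\mathcal H}^2$ term) becomes $(\partial_{\widetilde t}-\widetilde\Delta)\widetilde{\mathcal H}^2=\tfrac12(\widetilde{\mathcal H}^2)^2-\widetilde r\widetilde{\mathcal H}^2$ — forces $\widetilde{\mathcal H}^2$ to stay in a fixed compact subset of $(0,\infty)$ for all renormalized time; the lower bound uses positivity of scalar curvature (preserved, as noted after \Cref{prop_nullmeancurvature1}) and the constancy of the average, the upper bound uses that a bound on $\widetilde{\mathcal H}^2_{\max}$ is propagated by the maximum principle once it exceeds $2\widetilde r$. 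This simultaneously gives (i), since finite-time blow-up of $\mathcal H^2$ is impossible after renormalization, so $\widetilde T=\infty$ (the change of time variable $\widetilde t(t)=\int_0^t c(\tau)\,d\tau$ with $c$ growing fast enough sends $T_{\max}\mapsto\infty$).

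Next I would do (v)–(vii), the exponential decay statements, which are the heart of the lemma. For (v): from \Cref{thm_nullmeancurvature1} with any fixed $\sigma\in(0,1]$ we have $\newbtr{\accentset{\circ}{A}}^2\le C(\mathcal H^2)^{2-\sigma}$; rescaling, $\newbtr{\accentset{\circ}{\widetilde A}}^2 = c(t)^{-2}\newbtr{\accentset{\circ}{A}}^2$ while $(\widetilde{\mathcal H}^2)^{2-\sigma}=c(t)^{-(2-\sigma)}(\mathcal H^2)^{2-\sigma}$, so $\newbtr{\accentset{\circ}{\widetilde A}}^2\le C c(t)^{-\sigma}(\widetilde{\mathcal H}^2)^{2-\sigma}\le C' c(t)^{-\sigma}$ by (iii). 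It then remains to see $c(t)^{-1}$ decays exponentially in $\widetilde t$: since $r(\tau)\ge$ a positive constant (equivalently $\widetilde r$ constant plus $c$-rescaling, using $\operatorname{R}_{\min}>0$ and Gauß–Bonnet bounding $\operatorname{Vol}$), $c(t)=\exp(\int_0^t r)$ grows and $d\widetilde t=c\,dt$ gives $c(t)\sim$ a linear-or-faster function of $\widetilde t$; more carefully one shows $c(t)^{-1}\le C e^{-\delta\widetilde t}$ for some $\delta>0$ directly from $\operatorname{R}_{\max}\ge(T-t)^{-1}$ and \eqref{eq_scalarcurvlowerbound}. Alternatively, and more robustly, I would derive (v) intrinsically: $f_\sigma=(\mathcal H^2)^\sigma f_0$ was shown in the proof of \Cref{thm_nullmeancurvature1} to satisfy $(\partial_t-\Delta)f_\sigma\le \langle\text{drift}\rangle - \mathcal H^2(1-\tfrac12\sigma)f_\sigma-2(\mathcal H^2)^\sigma\btr{\nabla(A/\mathcal H^2)}^2$; renormalizing this inequality via \Cref{lemma_renormalizedflow} (noting its extension to inequalities when $Y$ has the right degree, per \cite[Remark 3.27]{chowluni}) and using the lower bound $\widetilde{\mathcal H}^2\ge 1/C$ from (iii) produces a reaction term $\le -\delta\, \widetilde f_\sigma$, whence $\widetilde f_\sigma\le Ce^{-\delta\widetilde t}$ by the maximum principle, which is exactly (v) after absorbing the bounded factor $(\widetilde{\mathcal H}^2)^{-\sigma}$. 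Statement (vi) follows the same way from the gradient function $G_\varepsilon$ and the bound $\btr{\nabla\mathcal H^2}^2\le \widetilde C G_\varepsilon\mathcal H^2$ proven in \Cref{thm_gradientestiamte}: renormalizing and using the two-sided bounds on $\widetilde{\mathcal H}^2$ converts $G_\varepsilon\mathcal H^2$ into something bounded and the $-\mathcal H^2$-type reaction into exponential decay, or one directly runs the maximum-principle argument of \Cref{thm_gradientestiamte} on the renormalized quantity where the extra $\widetilde r$-terms only help. Finally (vii) is an integrated form of (vi): $\widetilde{\mathcal H}^2_{\max}-\widetilde{\mathcal H}^2_{\min}\le \operatorname{diam}(\widetilde\gamma)\cdot\sup\btr{\widetilde\nabla\widetilde{\mathcal H}^2}\le C\cdot Ce^{-\delta\widetilde t/2}$, using (iv).

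For (ii) and (iv): (ii) is immediate from (iii) and (vii), since $\widetilde{\mathcal H}^2_{\min}/\widetilde{\mathcal H}^2_{\max}\ge 1-(\widetilde{\mathcal H}^2_{\max}-\widetilde{\mathcal H}^2_{\min})/\widetilde{\mathcal H}^2_{\max}\ge 1-C^2e^{-\delta\widetilde t}\to 1$ — alternatively it is just the renormalized restatement of \Cref{kor_gradientestimate}. For (iv), the diameter bound, I would argue as in Hamilton's $3$-dimensional case: from (iii) the renormalized scalar curvature, hence the Ricci curvature, is uniformly bounded above and below by positive constants, so Bonnet–Myers gives a uniform upper diameter bound $\operatorname{diam}(\widetilde\gamma(\widetilde t))\le \pi/\sqrt{\inf\widetilde{\Ric}}\le C$; in dimension $2$ this is simply $\operatorname{diam}\le \pi/\sqrt{\tfrac12\inf\widetilde{\mathcal H}^2}$ by \eqref{eq_gaußcurvature}. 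The main obstacle is the bookkeeping in transferring the reaction–diffusion inequalities of \Cref{thm_nullmeancurvature1} and \Cref{thm_gradientestiamte} to the renormalized flow — specifically verifying that each auxiliary quantity has a well-defined homogeneity degree so that \Cref{lemma_renormalizedflow} (and its inequality version) applies, and that after rescaling the favorable reaction terms genuinely dominate with a strictly positive exponential rate $\delta$; once the degrees are pinned down and (iii) is in hand, everything else is a consequence of the maximum principle and the already-proven unnormalized estimates.
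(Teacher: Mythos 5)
Your outline is correct in spirit and matches the paper's structure for (i), (ii), (iv), (v), and (vii) — in particular, the intrinsic route for (v) (renormalize the evolution inequality for $f_\sigma$, using that $f_\sigma$ has degree $-\sigma$, to pick up a strictly negative $-\sigma\widetilde r\widetilde f_\sigma$ reaction term) is sound, and the direct rescaling variant works too once you notice that $\widetilde r\,\widetilde t=\int_0^t r\,d\tau$ gives the clean identity $c(t)=e^{\widetilde r\widetilde t}$. But the ODE-comparison argument you propose for (iii) fails, and this is the load-bearing step. The renormalized reaction $\tfrac12(\widetilde{\mathcal H}^2)^2-\widetilde r\,\widetilde{\mathcal H}^2$ has an \emph{unstable} equilibrium at $2\widetilde r$: it is positive for $y>2\widetilde r$ and the comparison ODE $y'=\tfrac12 y^2-\widetilde r y$ still blows up in finite time. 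Since $\fint\widetilde{\mathcal H}^2=2\widetilde r$ by Gauß–Bonnet, the spatial maximum satisfies $\widetilde{\mathcal H}^2_{\max}\ge 2\widetilde r$ at every time, so the maximum principle gives no upper bound at all; likewise the reaction term pushes the minimum toward $0$. The paper's upper and lower bounds on $\widetilde{\mathcal H}^2$ come instead from the preserved volume together with geometry: Bonnet–Myers and Bishop–Gromov force $\widetilde{\mathcal H}^2_{\max}$ to be bounded above, Klingenberg's injectivity radius estimate and the volume bound force it to be bounded below, and the scale-invariant pinching \eqref{eq_scalarcurvatureratio} (from Corollary \ref{kor_gradientestimate}) then transfers these bounds to the minimum. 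Without this you do not get (iii), and you invoke (iii) in both of your routes to (v), so the gap propagates.

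There is also a subtler gap in (vi). Rescaling the conclusion of Theorem \ref{thm_gradientestiamte}, $\newbtr{\nabla\mathcal H^2}^2\le\widetilde C G_\varepsilon\mathcal H^2$, only yields $\newbtr{\widetilde\nabla\widetilde{\mathcal H}^2}^2\le C_\varepsilon e^{-2\widetilde r\widetilde t}+C\varepsilon$ for each fixed $\varepsilon>0$, which is an asymptotic smallness statement, not exponential decay — the constant $C\varepsilon$ does not vanish. The alternative you sketch, running the maximum-principle argument of Theorem \ref{thm_gradientestiamte} directly on the renormalized flow, is also not justified as stated, because $G_\varepsilon=2C_\varepsilon+\varepsilon(\mathcal H^2)^2-\newbtr{\accentset{\circ}A}^2$ mixes degrees $0$ and $-2$ and so is not in the scope of Lemma \ref{lemma_renormalizedflow}. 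The paper's resolution is to introduce the genuinely new scale-invariant quantity $\Psi=\newbtr{\nabla\mathcal H^2}^2/\mathcal H^2+K\newbtr{\accentset{\circ}A}^2$ (homogeneous of degree $-2$), show it is a subsolution of the heat equation under the unnormalized flow, and only then renormalize, gaining a uniform $-2\widetilde r\widetilde\Psi$ term that produces (v) and (vi) simultaneously. This choice of auxiliary function is the ingredient your proposal is missing.
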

	\begin{proof}
		In the following, $C,\widetilde{C}$ will always denote positive constants independent of $t$ that may vary from line to line.
		\begin{enumerate}
			\item[(i)] By substitution rule, we find that
			\[
			\int\limits_0^{\widetilde{t}(t_0)}\widetilde{r}(\widetilde{\tau})\d\widetilde{\tau}=\int\limits_0^{t_0}r(\tau)\d\tau.
			\]
			Combining this formula for $t\to T$ with \eqref{eq_scalarcurvlowerbound}, we can conclude (i) since $\widetilde{r}$ is constant. 
			\item[(ii)] Follows immediately from Corollary \ref{kor_gradientestimate}.
			\item[(iii)] By the Bishop--Gromov volume comparison, we have that
			\[
			\operatorname{Vol}(\widetilde{\gamma}(0))=\operatorname{Vol}(\widetilde{\gamma}(\,\widetilde{t}\,))\le C\operatorname{diam}(\widetilde{\gamma}(\,\widetilde{t}\,))^2,
			\]
			and recall that by the Bonnet--Myers Theorem
			\begin{align}\label{eq_renormalizedflow_bonnetmyers}
			\operatorname{diam}(\widetilde{\gamma}(\,\widetilde{t}\,))\le C\left(\widetilde{\mathcal{H}}_{max}^2\right)^{-\frac{1}{2}}.
			\end{align}
			Thus, $\widetilde{\mathcal{H}}_{max}^2$ is uniformly bounded from above. Now note that $\widetilde{\Sigma}_{\widetilde{t}}$ is a topological sphere, in particular simply connected. Hence, Klingenberg's injectivity radius estimate yields that
			\[
			\operatorname{inj}(\widetilde{\gamma}(\,\widetilde{t}\,))\ge C\left(\widetilde{\mathcal{H}}_{max}^2\right)^{-\frac{1}{2}},
			\]
			and therefore
			\[
			\operatorname{Vol}(\widetilde{\gamma}(0))=\operatorname{Vol}(\widetilde{\gamma}(\,\widetilde{t}\,))\ge C\left(\widetilde{\mathcal{H}}_{max}^2\right)^{-1},
			\]
			so $\widetilde{\mathcal{H}}_{max}^2$ is also uniformly bounded from below. Since the inequality  \eqref{eq_scalarcurvatureratio} is preserved under rescaling, we have that the scalar curvature is uniformly bounded and we can therefore pick some constant $C>0$ such that (iii) is satisfied.
			\item[(iv)] Follows directly from (iii) via \eqref{eq_renormalizedflow_bonnetmyers}.
		\end{enumerate}
		We prove (v) and (vi) simultaneously. We define $\Psi\definedas \frac{\btr{\nabla\mathcal{H}^2}^2}{\mathcal{H}^2}+K\newbtr{\accentset{\circ}{A}}^2$ for some positive constant $K$ to be determined later. Recall that along the unnormalized flow, we have that
		\[
		\left(\partial_t-\Delta\right)\btr{\nabla \mathcal{H}^2}^2\le C\mathcal{H}^2\btr{\nabla \mathcal{H}^2}^2-2\btr{\nabla^2\mathcal{H}^2}^2
		\]
		for some fixed constant $C$, and we find that
		\[
		\left(\partial_t-\Delta\right)\newbtr{\accentset{\circ}{A}}^2\le -\frac{1}{2}\btr{\nabla\mathcal{H}^2}^2
		\]
		by direct computation via Proposition \ref{prop_nullmeancurvature1} and $\btr{\nabla A}^2\ge \frac{3}{4}\btr{\nabla\mathcal{H}^2}^2$ proven in Proposition \ref{prop_codazziminkowski2}. Thus $\Psi$ is of degree $-2$ and satisfies the evolution equation
		\begin{align*}
		\partial_t\Psi&\le \frac{\Delta\btr{\nabla\mathcal{H}^2}^2}{\mathcal{H}^2}-\frac{\btr{\nabla\mathcal{H}^2}^2}{\left(\mathcal{H}^2\right)^2}\Delta\mathcal{H}+K\Delta \newbtr{\accentset{\circ}{A}}^2-2\frac{\btr{\nabla^2\mathcal{H}^2}^2}{\mathcal{H}^2}+\left(C-\frac{1}{2}-\frac{K}{2}\right)\btr{\nabla\mathcal{H}^2}^2\\
		&=\Delta\Psi+2\frac{\spann{\nabla\btr{\nabla\mathcal{H}^2}^2,\nabla\mathcal{H}^2}}{\left(\mathcal{H}^2\right)^2}-2\frac{\btr{\nabla^2\mathcal{H}^2}^2}{\mathcal{H}^2}-2\frac{\btr{\nabla\mathcal{H}^2}^4}{\left(\mathcal{H}^2\right)^3}+\left(C-\frac{1}{2}-\frac{K}{2}\right)\btr{\nabla\mathcal{H}^2}^2.
		\end{align*}
		Note that
		\begin{align*}
		\frac{\spann{\nabla\btr{\nabla\mathcal{H}^2}^2,\nabla\mathcal{H}^2}}{\left(\mathcal{H}^2\right)^2}
		=2\frac{\nabla_k\nabla_i\mathcal{H}^2\nabla^i\mathcal{H}^2\nabla^k\mathcal{H}^2}{\left(\mathcal{H}^2\right)^2}\le 2\frac{\btr{\nabla^2\mathcal{H}^2}\btr{\nabla\mathcal{H}^2}^2}{\left(\mathcal{H}^2\right)^2}\le \frac{\btr{\nabla^2\mathcal{H}^2}^2}{\mathcal{H}^2}+\frac{\btr{\nabla\mathcal{H}^2}^2}{\left(\mathcal{H}^2\right)^3}.
		\end{align*}
		We now choose $K\definedas 2C>0$, so we can conclude that $(\partial_t-\Delta)\Psi\le 0$. In particular, $\widetilde{\Psi}$ satisfies
		\[
		(\partial_{\widetilde{t}}-\widetilde{\Delta})\widetilde{\Psi}\le -\widetilde{C}\widetilde{\Psi}.
		\]
		By the maximum principle, we can now conclude that
		\[
		\frac{\btr{\widetilde{\nabla}\widetilde{\mathcal{H}}^2}^2}{\widetilde{\mathcal{H}}^2}+K\newbtr{\accentset{\circ}{\widetilde{A}}}^2\le Ce^{-\delta\widetilde{t}},
		\]
		so (v) and (vi) follow since $\widetilde{\mathcal{H}}^2$ is uniformly bounded. Lastly (vii) follows form (iv) and (vi).
	\end{proof}
	In particular $\btr{\widetilde{\operatorname{R}}-\widetilde{r}}\le Ce^{-\widetilde{\delta} t}$, so we know that the evolution speed of the renormalized flow \eqref{eq_renormricciflow} is integrable. We thus acquire uniform bounds and $C^0$ convergence of the metric due to a Lemma by Hamilton (cf. \cite[Lemma 6.10]{chowluni}):
	\begin{kor}\label{kor_uniformbound}
		Let $\gamma(t)$ be a solution of $2d$-Ricci flow with $\operatorname{R}>0$. Then the renormalized flow \eqref{eq_renormricciflow} exists for all time, and there exists a constant $C>0$ such that
		\[
		\frac{1}{C}\widetilde{\gamma}(0)\le \widetilde{\gamma}(\widetilde{t})\le C\widetilde{\gamma}(0),
		\]
		and $\widetilde{\gamma}(\widetilde{t})$ converges uniformly to a limiting metric $\widetilde{\gamma}(\infty)$ on compact sets as $\widetilde{t}\to\infty$.
	\end{kor}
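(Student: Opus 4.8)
The plan is to deduce everything from the exponential decay of the renormalized flow speed that is already available from \Cref{lem_renormalizedflow}, using only that the flow stays conformal in dimension $2$. First I would note that the hypothesis $\operatorname{R}>0$ is exactly what makes the preceding analysis run: positivity of $\mathcal{H}^2=2\operatorname{R}$ is preserved by \Cref{prop_nullmeancurvature1} and the parabolic maximum principle, so \Cref{lem_renormalizedflow} is available. Part (i) of that lemma is the assertion $\widetilde{T}=\infty$, and part (vii), together with the Gauß equation \eqref{eq_gaußcurvature} and the fact that $\widetilde{r}=\fint_\Sigma\widetilde{\operatorname{R}}$ lies between $\widetilde{\operatorname{R}}_{min}$ and $\widetilde{\operatorname{R}}_{max}$, yields the decay estimate $\btr{\widetilde{\operatorname{R}}-\widetilde{r}}\le Ce^{-\widetilde{\delta}\widetilde{t}}$ for constants $C,\widetilde{\delta}>0$ independent of $\widetilde{t}$, as recorded in the remark preceding the statement.

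Next I would write $\widetilde{\gamma}(\widetilde{t})=\phi(\widetilde{t},\cdot)\,\widetilde{\gamma}(0)$ for a positive conformal factor $\phi$ with $\phi(0,\cdot)\equiv1$; this is legitimate since every $\widetilde{\gamma}(\widetilde{t})$ lies in the conformal class of $\widetilde{\gamma}(0)$. In dimension $2$ the Ricci tensor is pure trace, $\widetilde{\Ric}_{ij}=\tfrac12\widetilde{\operatorname{R}}\widetilde{\gamma}_{ij}$, and the renormalized flow \eqref{eq_renormricciflow}, whose correction term is chosen precisely so that $\operatorname{Vol}(\widetilde{\gamma}(\widetilde{t}))$ remains constant, reduces to the scalar equation $\partial_{\widetilde{t}}\log\phi=\widetilde{r}-\widetilde{\operatorname{R}}$ (cf.\ the conformal reformulation of Ricci flow at the start of \Cref{sec_equialence}). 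Integrating in $\widetilde{t}$ and inserting the decay estimate yields, uniformly on $\Sigma$ and for all $0\le\widetilde{s}\le\widetilde{t}$,
\[
\btr{\log\phi(\widetilde{t},\cdot)-\log\phi(\widetilde{s},\cdot)}\le\int_{\widetilde{s}}^{\widetilde{t}}\btr{\widetilde{\operatorname{R}}-\widetilde{r}}\,\d\tau\le\frac{C}{\widetilde{\delta}}\,e^{-\widetilde{\delta}\widetilde{s}}.
\]

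From this the corollary follows quickly. Taking $\widetilde{s}=0$ gives $\btr{\log\phi(\widetilde{t},\cdot)}\le C/\widetilde{\delta}$, hence $e^{-C/\widetilde{\delta}}\,\widetilde{\gamma}(0)\le\widetilde{\gamma}(\widetilde{t})\le e^{C/\widetilde{\delta}}\,\widetilde{\gamma}(0)$ for all $\widetilde{t}$, which is the asserted two-sided bound after renaming the constant. The same estimate shows that $\{\log\phi(\widetilde{t},\cdot)\}_{\widetilde{t}}$ is uniformly Cauchy as $\widetilde{t}\to\infty$, so $\phi(\widetilde{t},\cdot)$ converges uniformly on $\Sigma$ to a positive limit $\phi_\infty$, and therefore $\widetilde{\gamma}(\widetilde{t})\to\widetilde{\gamma}(\infty)\definedas\phi_\infty\,\widetilde{\gamma}(0)$ uniformly (in particular on compact sets), with $\widetilde{\gamma}(\infty)$ a genuine continuous Riemannian metric thanks to the two-sided bound. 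Alternatively, since $\partial_{\widetilde{t}}\log\phi=\widetilde{r}-\widetilde{\operatorname{R}}$ and $\int_0^\infty\sup_\Sigma\btr{\widetilde{r}-\widetilde{\operatorname{R}}}\,\d\widetilde{t}\le C\int_0^\infty e^{-\widetilde{\delta}\widetilde{t}}\,\d\widetilde{t}<\infty$, one may invoke Hamilton's \cite[Lemma 6.10]{chowluni} directly.

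I do not expect a genuine obstacle, as the analytic content sits entirely in \Cref{lem_renormalizedflow} and what remains is essentially bookkeeping. The two points that deserve a little care are that $C$ and $\widetilde{\delta}$ be honestly independent of $\widetilde{t}$ — which is automatic since all estimates in \Cref{lem_renormalizedflow} are uniform in $\widetilde{t}$ — and that the limit $\widetilde{\gamma}(\infty)$ be non-degenerate, which is precisely what the two-sided bound furnishes. Upgrading this $C^0$ statement to the $C^k$-convergence required for \Cref{thm_mainthm} is then a separate matter of standard higher-order interior estimates.
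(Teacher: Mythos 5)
Your proposal is correct and follows essentially the same route as the paper: the paper's proof consists precisely of observing that the exponential decay $\btr{\widetilde{\operatorname{R}}-\widetilde{r}}\le Ce^{-\widetilde{\delta}\widetilde{t}}$ (which you derive from Lemma \ref{lem_renormalizedflow}(vii) and the Gauß equation, exactly as the text does) makes the evolution speed of $\widetilde{\gamma}$ integrable in time, and then citing Hamilton's lemma (\cite[Lemma 6.10]{chowluni}) for uniform equivalence and $C^0$ convergence. You simply unwind what that lemma does in the conformally round $2$-dimensional setting by reducing to the scalar ODE $\partial_{\widetilde{t}}\log\phi=\widetilde{r}-\widetilde{\operatorname{R}}$ and integrating — a legitimate and perhaps more transparent verification — and you note the direct citation as an alternative, which is what the paper actually does.
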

	\begin{bem}\label{bem_uniformbound}
		Since the renormalized metrics are also conformally round, i.e.,\linebreak $\widetilde{\gamma}(\,\widetilde{t}\,)=\widetilde{\omega}^2(\,\widetilde{t}\,)\d\Omega^2$, Corollary \ref{kor_uniformbound} also yields a uniform bound on the conformal factors $\widetilde{\omega}(t)$ only depending on $\widetilde\omega(0)$.
	\end{bem}
	To complete the proof of Theorem \ref{thm_mainthm}, it remains to show that $\widetilde{\gamma}(\infty)$ is in fact smooth and that the renormalized flow converges in $C^k$ for any $k$. In particular, due to Lemma \ref{lem_renormalizedflow} (i), $\widetilde{\gamma}(\infty)$ is then a metric of constant scalar curvature.
	We thus require bounds for the derivatives of the renormalized metrics, and by a standard argument it suffices to bound the derivatives of the Riemann tensor. However, for $n=2$, the Riemann tensor and its derivatives are fully determined by the scalar curvature and its derivatives. By the Gauß equation \eqref{eq_gaußcurvature}, it thus suffices to find appropriate bounds for $\widetilde{\mathcal{H}}^2$ and its derivatives.
	\begin{lem}\label{lem_renormflowhigherderivatives}
		For all $k\in\N$, there exist $C_k, \delta_k>0$, such that
		\[
		\btr{\widetilde{\nabla}^k\widetilde{\mathcal{H}}^2}^2\le C_ke^{-\delta_k\widetilde{t}}
		\]
	\end{lem}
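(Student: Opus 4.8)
The plan is to argue by induction on $k\ge 1$. The base case $k=1$ is precisely Lemma~\ref{lem_renormalizedflow}~(vi), and throughout we use the uniform two-sided bound on $\widetilde{\mathcal{H}}^2$ from Lemma~\ref{lem_renormalizedflow}~(iii). Every estimate will first be derived along the \emph{unnormalized} flow and then transported to the renormalized flow \eqref{eq_renormricciflow} via Hamilton's transformation law, Lemma~\ref{lemma_renormalizedflow} (together with the subsequent remark allowing it to be applied to differential inequalities and to tensor derivatives). The key structural observation is that, since $(\Sigma_t,\gamma_t)$ evolves by $2d$-Ricci flow (Section~\ref{sec_equialence}) and in dimension $2$ the entire Riemann tensor is prescribed by $\mathcal{H}^2$ via \eqref{eq_gaußcurvature} and \eqref{eq_riemannminkowski}, all curvature terms below are universal contractions of covariant derivatives of $\mathcal{H}^2$.

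First I would record a Bando--Shi type evolution equation for $\btr{\nabla^m\mathcal{H}^2}^2$ along the unnormalized flow. Starting from $\partial_t\mathcal{H}^2=\Delta\mathcal{H}^2+\frac12(\mathcal{H}^2)^2$ (Proposition~\ref{prop_nullmeancurvature1}) and commuting $\nabla^m$ past $\partial_t-\Delta$---which only generates terms built from $\partial_t\Gamma\sim\nabla\mathcal{H}^2$ and from \eqref{eq_riemannminkowski}---one obtains, schematically,
\[
\left(\partial_t-\Delta\right)\btr{\nabla^m\mathcal{H}^2}^2
=-2\btr{\nabla^{m+1}\mathcal{H}^2}^2
+\sum_{a+b=m}\nabla^a\mathcal{H}^2\ast\nabla^b\mathcal{H}^2\ast\nabla^m\mathcal{H}^2,
\]
with $\ast$ denoting metric contraction (plus strictly lower-order terms of the same type). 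Isolating the extreme summands $a=0$ and $b=0$ produces a single \emph{dangerous} term bounded by $C\mathcal{H}^2\btr{\nabla^m\mathcal{H}^2}^2$, while every remaining summand is a product $\btr{\nabla^a\mathcal{H}^2}\btr{\nabla^b\mathcal{H}^2}\btr{\nabla^m\mathcal{H}^2}$ with $1\le a,b\le m-1$, which by Young's inequality is $\le\varepsilon\btr{\nabla^m\mathcal{H}^2}^2+C_\varepsilon\btr{\nabla^a\mathcal{H}^2}^2\btr{\nabla^b\mathcal{H}^2}^2$. Now $\btr{\nabla^m\mathcal{H}^2}^2$ has fixed negative degree $-(m+2)$, so passing to the renormalized flow by Lemma~\ref{lemma_renormalizedflow} adds only a nonpositive multiple of itself; using the inductive hypothesis $\btr{\widetilde{\nabla}^j\widetilde{\mathcal{H}}^2}^2\le Ce^{-\delta\widetilde{t}}$ for $1\le j\le k$ together with Lemma~\ref{lem_renormalizedflow}~(iii) one then arrives at
\begin{align*}
\left(\partial_{\widetilde{t}}-\widetilde{\Delta}\right)\btr{\widetilde{\nabla}^{k+1}\widetilde{\mathcal{H}}^2}^2&\le C_1\btr{\widetilde{\nabla}^{k+1}\widetilde{\mathcal{H}}^2}^2+Ce^{-\delta\widetilde{t}},\\
\left(\partial_{\widetilde{t}}-\widetilde{\Delta}\right)\btr{\widetilde{\nabla}^{k}\widetilde{\mathcal{H}}^2}^2&\le -2\btr{\widetilde{\nabla}^{k+1}\widetilde{\mathcal{H}}^2}^2+Ce^{-\delta\widetilde{t}},
\end{align*}
where $C_1$ depends only on $k$ and the bound from Lemma~\ref{lem_renormalizedflow}~(iii), and where in the second line every term other than $-2\btr{\widetilde{\nabla}^{k+1}\widetilde{\mathcal{H}}^2}^2$ is already lower order and hence absorbed into $Ce^{-\delta\widetilde{t}}$.

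Next I would run the standard absorption argument: set $\widetilde{\Phi}\definedas\btr{\widetilde{\nabla}^{k+1}\widetilde{\mathcal{H}}^2}^2+N\btr{\widetilde{\nabla}^{k}\widetilde{\mathcal{H}}^2}^2$ for a constant $N>\frac12(C_1+1)$. Adding the two inequalities above, the good term $N\cdot\bigl(-2\btr{\widetilde{\nabla}^{k+1}\widetilde{\mathcal{H}}^2}^2\bigr)$ overwhelms the dangerous one, so that
\[
\left(\partial_{\widetilde{t}}-\widetilde{\Delta}\right)\widetilde{\Phi}\le -\btr{\widetilde{\nabla}^{k+1}\widetilde{\mathcal{H}}^2}^2+Ce^{-\delta\widetilde{t}}.
\]
Since $\btr{\widetilde{\nabla}^{k+1}\widetilde{\mathcal{H}}^2}^2=\widetilde{\Phi}-N\btr{\widetilde{\nabla}^{k}\widetilde{\mathcal{H}}^2}^2\ge\widetilde{\Phi}-Ce^{-\delta\widetilde{t}}$ by the inductive hypothesis, this reads $\left(\partial_{\widetilde{t}}-\widetilde{\Delta}\right)\widetilde{\Phi}\le-\widetilde{\Phi}+Ce^{-\delta\widetilde{t}}$; the parabolic maximum principle combined with an ODE comparison then yields $\widetilde{\Phi}\le C_{k+1}e^{-\delta_{k+1}\widetilde{t}}$ for suitable $C_{k+1},\delta_{k+1}>0$---one may take $\delta_{k+1}$ slightly below $\min\{1,\delta\}$, at the cost of a polynomial factor in $\widetilde{t}$ in the resonant case $\delta=1$. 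A fortiori $\btr{\widetilde{\nabla}^{k+1}\widetilde{\mathcal{H}}^2}^2\le\widetilde{\Phi}\le C_{k+1}e^{-\delta_{k+1}\widetilde{t}}$, which closes the induction.

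I expect the only genuinely delicate step to be the first one: writing down the Bando--Shi evolution equation with the correct leading sign $-2\btr{\nabla^{m+1}\mathcal{H}^2}^2$ and checking that, apart from the single term comparable to $\mathcal{H}^2\btr{\nabla^m\mathcal{H}^2}^2$, every other term involves only covariant derivatives of $\mathcal{H}^2$ of order at most $m-1$, so that the inductive hypothesis applies. Here the $2$-dimensionality is exactly what makes this work, since \eqref{eq_riemannminkowski} collapses all curvature contributions into $\mathcal{H}^2$; otherwise the computation runs entirely parallel to the $3$-dimensional case in \cite[Chapter~3]{chowluni}. The remaining bookkeeping---tracking the negative degrees through Lemma~\ref{lemma_renormalizedflow} and choosing $N$ large---is routine.
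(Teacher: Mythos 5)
Your proposal is correct and follows essentially the same route as the paper: strong induction on $k$, Bando--Shi type evolution inequalities for $\btr{\nabla^m\mathcal{H}^2}^2$ with leading good term $-2\btr{\nabla^{m+1}\mathcal{H}^2}^2$, passage to the renormalized flow via Lemma~\ref{lemma_renormalizedflow} using negative degree, an auxiliary two-term test function to absorb the dangerous reaction term, and the parabolic maximum principle with an ODE comparison. The only cosmetic difference is that the paper works on the unnormalized side with $f\definedas\btr{\nabla^{k+1}\mathcal{H}^2}^2+K\mathcal{H}^2\btr{\nabla^k\mathcal{H}^2}^2$ (the $\mathcal{H}^2$ factor keeps $f$ homogeneous of degree $-(k+3)$ so Lemma~\ref{lemma_renormalizedflow} applies to $f$ as a whole) and only passes to the renormalized flow at the end, whereas you renormalize each summand separately and form $\widetilde{\Phi}\definedas\btr{\widetilde{\nabla}^{k+1}\widetilde{\mathcal{H}}^2}^2+N\btr{\widetilde{\nabla}^{k}\widetilde{\mathcal{H}}^2}^2$ directly; both are valid because $\widetilde{\mathcal{H}}^2$ is uniformly bounded above and below by Lemma~\ref{lem_renormalizedflow}~(iii).
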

	\begin{proof}
		Since $n=2$, there exists a fixed constant $C$, such that $\btr{\nabla^k\mathcal{H}^2}^2=C \btr{\nabla^k\Rm}^2$, and thus the evolution of $\btr{\nabla^k\mathcal{H}^2}^2$ along the unnormalized flow can be estimated by
		\begin{align}\label{eq_lem_higherderivatives1}
		\partial_t\btr{\nabla^k\mathcal{H}^2}^2\le \Delta \btr{\nabla^k\mathcal{H}^2}^2-2\btr{\nabla^{k+1}\mathcal{H}^2}^2+C(k)\sum_{l=0}^k\btr{\nabla^l\mathcal{H}^2}\btr{\nabla^{k-l}\mathcal{H}^2}\btr{\nabla^k\mathcal{H}^2},
		\end{align}
		where $C(k)$ denotes a constant only depending on $k$, cf. \cite[Chapter 3]{chowluni}.\newline
		We will proof the statement by strong induction, where in the following $C,C_k,C_{k+1}$ will be constants only depending on $k$ which may vary from line to line.\newline
		The statement is true for $k=1$ as proven in Lemma \ref{lem_renormalizedflow} (vi).\newline
		We now assume that the statement is true for all $1\le l\le k$, and proceed from $k$ to $k+1$. We define $f\definedas\btr{\nabla^{k+1}\mathcal{H}^2}^2+K\mathcal{H}^2\btr{\nabla^{k}\mathcal{H}^2}^2$, where $K$ is a positive constant to be determined later. Then $f$ is of degree $-k-3$ and according to \eqref{eq_lem_higherderivatives1} its evolution under Ricci flow is given by
		\begin{align*}
		\partial_t f
		\le &\,\Delta \btr{\nabla^{k+1}\mathcal{H}^2}^2-2\btr{\nabla^{k+2}\mathcal{H}^2}^2+C_{k+1}\sum\limits_{l=0}^{k+1}\btr{\nabla^{l}\mathcal{H}^2}\btr{\nabla^{k+1-l}\mathcal{H}^2}\btr{\nabla^{k+1}\mathcal{H}^2}\\
		&+K\btr{\nabla^{k}\mathcal{H}^2}^2\Delta\mathcal{H}^2+\frac{K}{2}\left(\mathcal{H}^2\right)^2\btr{\nabla^{k}\mathcal{H}^2}^2\\
		&+K\mathcal{H}^2\Delta \btr{\nabla^{k}\mathcal{H}^2}^2-2K\mathcal{H}^2\btr{\nabla^{k+1}\mathcal{H}^2}^2+K\mathcal{H}^2C_k\sum\limits_{l=0}^k\btr{\nabla^{l}\mathcal{H}^2}\btr{\nabla^{k-l}\mathcal{H}^2}\btr{\nabla^{k}\mathcal{H}^2}\\
		=&\,\Delta f-2K\spann{\nabla \mathcal{H}^2,\nabla\btr{\nabla^k\mathcal{H}^2}^2}-2K\mathcal{H}^2\btr{\nabla^{k+1}\mathcal{H}^2}^2\\
		&+C_{k+1}\sum\limits_{l=0}^{k+1}\btr{\nabla^{l}\mathcal{H}^2}\btr{\nabla^{k+1-l}\mathcal{H}^2}\btr{\nabla^{k+1}\mathcal{H}^2}+K\mathcal{H}^2C_k\sum\limits_{l=0}^k\btr{\nabla^{l}\mathcal{H}^2}\btr{\nabla^{k-l}\mathcal{H}^2}\btr{\nabla^{k}\mathcal{H}^2}\\
		\le\,& \Delta f+C_k(K,\mathcal{H}^2,\nabla^{1\le l \le k}\mathcal{H}^2)+(C_{k+1}-2K)\mathcal{H}^2\btr{\nabla^{k+1}\mathcal{H}^2}^2,
		\end{align*}
		where we have used Young's inequality in the last line and collected all remaining terms in $C_k(K,\mathcal{H}^2,\nabla^{1\le l \le k}\mathcal{H}^2)$.
		We now choose $K:=C_{k+1}$, and thus we find that
		\[
		(\partial_t-\Delta)f\le C_k(\mathcal{H}^2,\nabla^{1\le l \le k}\mathcal{H}^2),
		\]
		where $C_k(\mathcal{H}^2,\nabla^{1\le l \le k}\mathcal{H}^2)$ denotes a sum of products of derivatives with at least order $1$ and at most order $k$ such that the factors only depend on $k$ and possibly $\mathcal{H}^2$, and $C_k(\mathcal{H}^2,\nabla^{1\le l \le k}\mathcal{H}^2)$ is of degree $-k-4$. Hence, the evolution of $\widetilde{f}$ along the renormalized flow is given by
		\[
		(\partial_{\widetilde{t}}-\widetilde{\Delta})\widetilde{f}\le C_k(\widetilde{\mathcal{H}}^2,\widetilde{\nabla}^{1\le l\le k}\widetilde{\mathcal{H}}^2)-C\widetilde{f}\le \widetilde{C}e^{-\widetilde{\delta}\,\widetilde{t}}-C\widetilde{f}
		\]
		for some $\widetilde{C},\widetilde{\delta}>0$ by induction, as $\widetilde{\mathcal{H}}^2$ is uniformly bounded by Lemma \ref{lem_renormalizedflow}. Now choosing $\delta<\min(\widetilde{\delta},C)$, we find that
		\[
		(\partial_{\widetilde{t}}-\widetilde{\Delta})\left(e^{\delta\widetilde{t}}\widetilde{f}-\widetilde{C}\widetilde{t}\right)\le0.
		\]
		So by the maximum principle, there exists $C_0>0$ such that
		\[
		e^{\delta\widetilde{t}}\widetilde{f}-\widetilde{C}\widetilde{t}\le C_0\Leftrightarrow\widetilde{f}\le Ce^{-\delta\widetilde{t}}(C_0+\widetilde{C}\widetilde{t}).
		\]
		Since exponential decay wins over linear growth, there exists an appropriate constant $C_k>0$ for any choice $0<\delta_k<\delta$ such that
		\[
		\widetilde{f}\le C_ke^{-\delta_k\widetilde{t}}.
		\]
		This concludes the proof.
	\end{proof}
	From this, we can conclude the uniform convergence in $C^k$ for any $k\in\N$ and Theorem \ref{thm_mainthm} is proven.
\section{Comments}\label{sec_discussion}
	We close with some comments on the higher dimensional case. Since the general structure of the standard lightcone derived in \Cref{sec_nullgeom} extends directly to higher dimensions (up to some possibly dimension dependent constants), the geometric intuition developed in \Cref{sec_nullgeom} also holds for the standard lightcone in the $n+1$ dimensional Minkowski spacetime, $n\ge 3$. In particular, the Gauß equation yields
	\begin{align}\label{eq_higherdim1}
		R=\frac{n-1}{n}\mathcal{H}^2.
	\end{align}
	From this, we can similarly establish that null mean curvature flow is proportional to the Yamabe flow \cite{hamilton2} for the conformal class of the round metric on $\Sbb^{n-1}$ in all dimensions $n-1\ge 2$. More precisely, the metrics evolve under renormalized null mean curvature flow as
	\begin{align}
		\frac{\d}{\d\widetilde{t}}\widetilde{g}(\widetilde{t})=-\frac{1}{n-1}\left(\widetilde{R}-\fint\widetilde{R}\right)\widetilde{g}(\widetilde{t}).
	\end{align}
	Since not all metrics on $\Sbb^{n-1}$ are necessarily conformally round in higher dimension $n-1\ge 3$ they can thus not be embedded isometrically into the standard Minkowski lightcone.
	
	Similar to the $2$-dimensional case, renormalized Yamabe flow has been subject to thorough investigation using various different methods. The case of the conformal class of the round sphere was first treated separately by Chow \cite{chow2} under the additional assumption of positive Ricci curvature, which is preserved under the flow. A uniform approach for locally conformally flat metrics was later provided by Ye \cite{ye}. Schwetlick--Struwe \cite{schwetlickstruwe} performed a precise blow-up analysis and showed that singularities arising in the blow-up procedure can by ruled out by employing the positive mass theorem (cf. \cite{schoenyau}) if the initial energy is less than some uniform bound depending on the Yamabe invariant of the initial metric and the Yamabe energy of the round sphere. The general approach by Brendle \cite{brendle,brendle3} leads to a short proof of the conformally round case \cite{brendle2}. We suspect that the techniques developed in this paper could be applied to gain a new proof of this result, possibly under similar restrictions as Chow \cite{chow2}. This is subject of future work.
	\nocite{*}

\bibliography{bib_ricciflowlightcone}

\nopagebreak
\bibliographystyle{plain}
\end{document}